\theoremstyle{plain}
\newtheorem{thm}{Theorem}[section]
\newtheorem{lem}[thm]{Lemma}
\newtheorem{cor}[thm]{Corollary}
\newtheorem{prop}[thm]{Proposition}
\newtheorem{question}[thm]{Question}
\newtheorem*{thm*}{Theorem}
\newtheorem*{problem*}{Problem}
\theoremstyle{definition}
\newtheorem{defn}[thm]{Definition}
\newtheorem{notation}[thm]{Notation}
\newtheorem{rem}[thm]{Remark}
\newtheorem{ex}[thm]{Example}
\numberwithin{equation}{section}
\newcommand{\Spec}{\operatorname{Spec}}
\renewcommand{\O}{{\mathcal O}}
\renewcommand{\hom}{\operatorname{Hom}}
\newcommand{\real}{{\mathbb R}}
\newcommand{\cplx}{{\mathbb C}}
\newcommand{\Z}{{\mathbb Z}}
\newcommand{\id}{\operatorname{id}}
\newcommand{\gm}{\mathbb{G}_{m}}
\newcommand{\Ext}{\operatorname{Ext}}
\newcommand{\Aut}{\operatorname{Aut}}
\newcommand{\Pic}{\operatorname{Pic}}
\newcommand{\bbP}{\mathbb{P}}
\newcommand{\bbA}{\mathbb{A}}
\newcommand{\modmod}[1]{/ \! \! / \!_{#1}}
\newcommand{\leftexp}[2]{{\vphantom{#2}}^{#1}{#2}}
\newcommand{\weezer}{\leftexp{=}{\kern-0.23em\mathsf{W}}^{\kern-0.21em =}}
\newcommand{\oldF}{\mathsf{G}}
\begin{document}

\title[Derived categories of centrally-symmetric toric Fanos]{Derived categories of centrally-symmetric smooth toric Fano varieties}

{\small
\author[Ballard]{Matthew R Ballard}
\address{Department of Mathematics, University of South Carolina, 
Columbia, SC 29208}
\email{ballard@math.sc.edu}
\urladdr{\url{http://www.matthewrobertballard.com}}
\thanks{}

\author[Duncan]{Alexander Duncan}
\address{Department of Mathematics, University of South Carolina, 
Columbia, SC 29208}
\email{duncan@math.sc.edu}
\urladdr{\url{http://people.math.sc.edu/duncan/}}
\thanks{}

\author[McFaddin]{Patrick K. McFaddin}
\address{Department of Mathematics, Fordham University, 113 W 60th St., New York, NY 10023}
\email{pmcfaddin@fordham.edu}
\urladdr{\url{http://mcfaddin.github.io/}}
\thanks{}
}

\begin{abstract}
 We exhibit full exceptional collections of vector bundles on any smooth, Fano arithmetic toric variety whose split fan is centrally symmetric. 
\end{abstract}

\maketitle
\addtocounter{section}{0}



\section{Introduction}

In recent years, there has been an explosion of work focused on derived categories and their connections to the geometry of algebraic varieties. Such work has brought applications to birational geometry of varieties over $\cplx$ \cite{AT, ABB, BB, BMMSS, KuznetsovCubic4fold, Vial} and $k$-linear triangulated categories for general fields $k$ \cite{AKW, AAGZ, ADPZ, HT, Honigs, LiebMaulSnow}. Naturally, derived categories have also proved to be an invaluable invariant for studying birational
geometry over arbitrary fields \cite{AB} as well as for solving problems concerning algebraic $K$-theory and additive invariants in general \cite{Tabuada}.

Given a $k$-variety $X$, one gains insight into the structure of the coherent derived category $\mathsf{D^b}(X) = \mathsf{D^b}( \text{coh} X)$ by treating it as a ``vector space" and exhibiting a semi-orthonormal ``basis" relative to the ``bilinear form" $\hom _{\mathsf{D^b}(X)}(-, -)$.  In the proper terminology, one exhibits an
\emph{exceptional collection} consisting of \emph{exceptional objects}.
Originally put forth by Beilinson in \cite{Beilinson}, an exceptional object of the $k$-linear triangulated category $\mathsf{D^b}(X)$ is one whose endomorphism algebra is isomorphic to the base field $k$. In the case when $k$ is not algebraically closed, this definition is much too restrictive. This formalism does not allow for exceptional objects whose endomorphism algebra is a non-trivial simple $k$-algebra, e.g., a field extension of $k$.  The existence of such algebras reflects the arithmetic complexity of $k$ via the associated classes in the Brauer group.

A more natural definition: an object of $\mathsf{D^b}(X)$ is \emph{exceptional} if its endomorphism algebra is a division $k$-algebra (concentrated in homological degree zero) \cite{AB, BDM}. An exceptional collection is then given by a totally ordered set $\mathsf{E} = \{E_1, ..., E_s\}$ of exceptional objects in $\mathsf{D^b}(X)$ satisfying $\text{Ext}^n(E_i, E_j) = 0$ for all integers $n$ whenever $i >j$.  An exceptional collection is $\emph{full}$ if it generates $\mathsf{D^b}(X)$, i.e., the smallest thick subcategory of $\mathsf{D^b}(X)$ containing $\mathsf{E}$ is all of $\mathsf{D^b}(X)$ (see Definition~\ref{def:exceptional} for details).

Exceptional collections provide the most atomic decomposition of the
derived category of coherent sheaves on a variety. They have rich ties
to representation theory of finite-dimensional algebras and their
existence has strong structural implications for the motive of a variety, both in the commutative and non-commutative settings \cite{OrlovChow, TabuadaChow}.
However, the following question is still very open:
\begin{question} \label{ques:excoll}
 Which smooth projective varieties admit full exceptional collections? 
\end{question}
In particular, even in cases where one knows that the answer to this
question is positive, techniques for constructing full exceptional
collections can be highly idiosyncratic. 

Toric varieties defined over algebraically-closed fields of
characteristic zero provide an important testing ground which informs
our understanding of the existence and construction of exceptional
collections. The existence of such collections was settled affirmatively in \cite{Kawamata, Kawamata2} (see also \cite{BFK}). Moving beyond to general fields and twisted forms of toric varieties (also called \emph{arithmetic toric
varieties} \cite{Duncan, ELFST, MerkPan}), one has the opportunity to
further advance our grasp of the general situation. Indeed, this
presents a non-trivial challenge: it is not known whether all smooth
projective arithmetic toric varieties admit full exceptional
collections. On the other hand, base changing a field $k$ to its
separable closure $k^{\text{sep}}$ opens the door to many useful tools
and known results.

In \cite{BDM}, the authors showed that a smooth projective $k$-variety $X$ (not required
to be toric) admits a full exceptional collection if and only if
$X_{k^{\text{sep}}}$ admits a full exceptional collection which is
Galois-stable, i.e., objects of the collection are permuted by the
action of $\text{Gal}(k^{\text{sep}}/k)$. Exhibiting a full exceptional
collection over $k$ therefore requires that one produce a collection
over $k^{\text{sep}}$ which is highly symmetric with respect to the
Galois action. By considering the class of toric varieties, one quickly
recognizes that ``most'' full exceptional collections are not
Galois-stable. The Galois-stable collections are often the simplest,
particularly due to their large exceptional blocks (subcollections
consisting of objects which are mutually orthogonal). One may
optimistically hope that the additional constraint of Galois-stability
makes the search for a positive answer to Question~\ref{ques:excoll}
more tractable in general. 

In this paper, we study a particular highly-symmetric class of smooth projective varieties. A polytope $P \subseteq \real ^m$ is \emph{centrally symmetric} if it satisfies $-P = P$. The smooth split toric varieties $X$ whose anti-canonical polytope is full-dimensional and centrally symmetric were classified in \cite{VosKly}. It was shown in loc. cit. that any such variety, which we refer to as \emph{centrally symmetric toric Fano varieties}, is isomorphic to a product of projective lines and \emph{generalized del Pezzo varieties} $V_n$ of dimension $n = 2m$.

The variety $V_n$ is the (split) toric variety with rays given by
$$\begin{array}{rl}
e_0 &= (-1,-1,\cdots,-1)\\
e_1 &= (1,0,\cdots,0)\\
e_2 &= (0,1,\cdots,0)\\
&\vdots\\
e_n &= (0,0,\cdots,1)
\end{array}
\hspace{.4cm}
\begin{array}{rl}
\bar{e}_0 &= (1,1,\cdots,1)\\
\bar{e}_1 &= (-1,0,\cdots,0)\\
\bar{e}_2 &= (0,-1,\cdots,0)\\
&\vdots\\
\bar{e}_n &= (0,0,\cdots,-1)
\end{array}$$ and whose maximal cones are as follows
(see \cite[Proof of Thm. 5]{VosKly}).
Each maximal cone is generated by the rays in the set
$\{ e_i \}_{i \in A} \cup \{ \bar{e}_i \}_{i \in B}$
where $A$ and $B$ are disjoint subsets of $\{0, \ldots, n\}$, each
of cardinality $\frac{n}{2}$.
The number of maximal cones $c(n)$ of $V_n$ is 
\[
c(n)=\frac{(n+1)!}{(\frac n 2)!^2} = \frac{(2m +1)! }{m!^2}\ , 
\]
which coincides with the rank of Grothendieck group $K_0(V_n)$.
Throughout, we let $\Delta$ denote the fan corresponding to $V_n$.
Note that $V_2$ is the del Pezzo surface $\mathsf{dP}_6$ of degree 6;
and $V_4$ is the variety (116) in the enumeration of \cite{Prabhu} or
(118) in the enumeration of \cite{Batyrev}.
Any odd-dimensional centrally symmetric toric Fano variety has $\bbP^1$
as a factor and there are no generalized del Pezzo surfaces of odd
degree.

The variety $V_n$ admits a natural $(S_{n+1} \times C_2)$-action, given
by an action on the rays $e_i, \overline{e}_i$.  The $S_{n+1}$-action
permutes $e_0,\ldots,e_n$ and $\bar{e}_0\ldots\bar{e}_0$ in the obvious
way.  The $C_2$-action, whose generator we refer to as \emph{the
antipodal involution}, is the antipodal map on the cocharacter lattice,
and interchanges $e_i$ and $\bar{e}_i$ for each index $i$.

The variety $V_n$ is of importance in birational geometry due to its
appearance in the factorization of the standard Cremona transformation
of $\bbP^n$, and may be constructed in an entirely geometric manner. First, take the blow-up of $\mathbb{P}^n$ at the collection of $(n+1)$ torus fixed points, then
flip the (strict transforms) of the lines through these points, then
flip the (strict transforms) of planes through these points, and so on, up
to, but not including, the half-dimensional linear subspaces. The
resulting variety is $V_n$ \cite[\S3]{Casagrande}. 

\begin{notation}
We let $Y_n$ denote the blowup of $\mathbb{P}^n$ at its ($n+1$) torus fixed points.
\end{notation}

 Since $V_n$ and $Y_n$  are
isomorphic in codimension $1$, they have isomorphic Picard groups. We let $H,E_0, \ldots, E_n$ be a basis for $\Pic(V_n)$,
given by the hyperplane and 
exceptional divisors of $Y_n$.
The divisors corresponding to the rays $e_i, \overline{e}_i$ are then given by
\[
[e_i] = E_i, \quad
[\bar{e}_i] = (H-\sum_{j=0}^{n}E_j)+E_i,
\]
where $S_{n+1}$ permutes the $E_i$ leaving $H$ fixed, and the antipodal involution
is represented by the matrix
\[
\left( \begin{array}{rrrrr}
n & 1 & 1 & \cdots & 1 \\
1-n & 0 & -1 & \cdots & -1 \\
1-n & -1 & 0 & \cdots & -1 \\
\vdots & \vdots & \vdots & \ddots & \vdots\\
1-n & -1 & -1 & \cdots & 0 \\
\end{array} \right).
\]
For each $c \in \Z$ and $J \subseteq \{0, \ldots, n\}$, define 
\[
 F_{c,J} := c\left(\sum_{i=0}^{n}E_i - H\right) - \sum_{j \in J} E_j .
\]
Note that the antipodal involution takes $F_{c,J}$ to $F_{|J|-c,J}$. 

\begin{thm} \label{thm:Vn}
 The set $\oldF_n$ consisting of all line bundles of the form $\O(F_{c,J})$ with
 \begin{enumerate}
  \item $\displaystyle{|J| \le \frac{n}{2},\ |J|-\frac{n}{4} \le c \le \frac{n}{4}}$, or
  \item $\displaystyle{|J| \ge \frac{n}{2}+1,\ \frac{n+2}{4} \le c \le |J| - \frac{n+2}{4}}$
 \end{enumerate}
forms a full strong $(S_{n+1} \times C_2)$-stable exceptional collection of line bundles
on $V_n$ under any ordering of the
blocks such that $|J|$ is (non-strictly) decreasing.
\end{thm}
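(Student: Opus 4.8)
The plan is to reduce every assertion to the combinatorial computation of cohomology of line bundles on the toric variety $V_n$, using the very rigid shape of the fan $\Delta$. Since all objects in $\oldF_n$ are line bundles, $\Ext^k(\O(F_{c,J}),\O(F_{c',J'}))\cong H^k(V_n,\O(F_{c',J'}-F_{c,J}))$, so the only input for ``exceptional'' and ``strong'' is cohomology vanishing for the difference bundles. First I would fix torus-invariant representatives: writing $D_\rho$ for the boundary divisor of a ray $\rho$, one has $\sum_i E_i-H\sim D_{e_0}-D_{\bar e_0}$ and $E_j\sim D_{e_j}$, hence $F_{c,J}\sim c(D_{e_0}-D_{\bar e_0})-\sum_{j\in J}D_{e_j}$. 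The decisive remark is that for any line bundle $\mathcal L$ on $V_n$ the integer $a_{e_i}(\mathcal L)+a_{\bar e_i}(\mathcal L)$ (the sum of the coefficients of $D_{e_i}$ and $D_{\bar e_i}$ in any torus-invariant representative) is an invariant of $\mathcal L$, because $e_i+\bar e_i=0$; and for $\mathcal L=\O(F_{c',J'}-F_{c,J})$ this invariant is $1$ if $i\in J\setminus J'$, $-1$ if $i\in J'\setminus J$, and $0$ otherwise --- in particular it is always $\ge -1$. The same computation shows $\sum_i a_{e_i}(\mathcal L)$ and $\sum_i a_{\bar e_i}(\mathcal L)$ are invariants too, with values $(c'-c)-(|J'|-|J|)$ and $c-c'$ for the difference bundle.

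The technical heart is a structural lemma for the toric cohomology formula $H^k(V_n,\O(\mathcal L))_m\cong\widetilde H^{k-1}(V_{\mathcal L,m})$, where $V_{\mathcal L,m}\subseteq S^{n-1}$ is the subcomplex on those $e_i$ with $\langle m,e_i\rangle<-a_{e_i}(\mathcal L)$ together with those $\bar e_i$ with $\langle m,\bar e_i\rangle<-a_{\bar e_i}(\mathcal L)$. I would prove: if $a_{e_i}(\mathcal L)+a_{\bar e_i}(\mathcal L)\ge -1$ for every $i$, then no index $i$ can contribute both $e_i$ and $\bar e_i$; and since $\{e_i\}_{i\in S}\cup\{\bar e_i\}_{i\in T}$ spans a cone of $\Delta$ exactly when $S\cap T=\emptyset$ and $|S|,|T|\le\tfrac n2$, the complex $V_{\mathcal L,m}$ is the simplicial join of the $(\tfrac n2-1)$-skeleton of a full simplex on the ``low'' indices with the $(\tfrac n2-1)$-skeleton of a full simplex on the ``high'' indices. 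By the K\"unneth formula for joins such a complex has reduced cohomology concentrated in degree $n-1$, nonzero there only when the low set and the high set each have more than $\tfrac n2$ elements, and it is empty (the only case feeding $H^0$) precisely when there are no low and no high indices at all. Two consequences: for every difference bundle $\O(F_{c',J'}-F_{c,J})$ one gets $H^k=0$ for $0<k<n$ for free, and $H^n=0$ as well, because a low index and a high index can never coincide whereas two subsets of $\{0,\dots,n\}$ each of size $>\tfrac n2$ must intersect --- this gives the ``strong'' property for all pairs. For ``exceptional'' I would read off from the lemma that $H^0(V_n,\O(F_{c',J'}-F_{c,J}))\ne 0$ iff $J'\subseteq J$ and $0\le c-c'\le|J\setminus J'|$; since in the prescribed order ``$\O(F_{c',J'})$ earlier than $\O(F_{c,J})$'' forces $|J|\le|J'|$, and $J'\subseteq J$ with $|J|\le|J'|$ forces $J=J'$ and then $c=c'$, all $\Ext$'s between distinct members vanish. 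Finally $(S_{n+1}\times C_2)$-stability is clear: $S_{n+1}$ fixes $c$ and $|J|$, and the antipodal involution $F_{c,J}\mapsto F_{|J|-c,J}$ preserves conditions (1) and (2), each being invariant under $c\mapsto|J|-c$.

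There remain the count $|\oldF_n|=c(n)$ and fullness. The count is a finite identity: for each value $j=|J|$ the number of admissible $c$ is the number of integers in the interval of (1) (when $j\le\tfrac n2$) or of (2) (when $j\ge\tfrac n2+1$), and one verifies, with a short case split on $n\bmod 4$, that $\sum_j\binom{n+1}{j}\cdot(\#\text{admissible }c)=\tfrac{(n+1)!}{(n/2)!^2}=\operatorname{rk}K_0(V_n)$. For fullness, the block structure above already produces a semi-orthogonal decomposition $\mathsf{D^b}(V_n)=\langle\mathcal A,\mathcal B_{n+1},\dots,\mathcal B_0\rangle$, with $\mathcal B_j$ generated by the $\O(F_{c,J})$ having $|J|=j$; the count gives $\operatorname{rk}K_0(\mathcal A)=0$, and fullness is equivalent to $\mathcal A=0$.

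This last step is where I expect the real work. My first line of attack would be a resolution of the diagonal: a bounded complex on $V_n\times V_n$, with terms finite direct sums of $\O(F_a)\boxtimes\O(F_b)^{\vee}$ with $F_a,F_b\in\oldF_n$, quasi-isomorphic to the structure sheaf of the diagonal $V_n\hookrightarrow V_n\times V_n$; I would try to obtain it by trimming the Koszul complex of the irrelevant ideal in the Cox ring of $V_n$ and then using the $(S_{n+1}\times C_2)$-symmetry of $\Delta$ to mutate all the line-bundle twists that occur into the prescribed window. Granting such a resolution, the usual Fourier--Mukai argument forces $\mathcal A=0$. An alternative is an induction on $n$ through the birational description of $V_n$ --- iterated blow-up of $\bbP^n$ at its $n+1$ torus-fixed points, followed by the flips of the strict transforms of the coordinate linear subspaces of dimension below $\tfrac n2$ --- applying Orlov's blow-up formula and the semi-orthogonal decomposition of a standard flip at each stage, and mutating the resulting exceptional objects into the line bundles $\O(F_{c,J})$. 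In both approaches the delicate point --- and, I suspect, the structural reason the bounds involve $\tfrac n4$ and $\tfrac{n+2}4$ with a change of regime at $|J|=\tfrac n2$ --- is to show that it is exactly the window $\oldF_n$, and no larger or skewed set, that carries the resolution (equivalently, that generates).
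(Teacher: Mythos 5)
Your approach to the Ext-vanishing part is essentially the paper's own approach (the Borisov--Hua forbidden-cone criterion used in Lemma~3.5 is precisely the simplicial-complex description $H^k(V_n,\mathcal L)_m \cong \widetilde H^{k-1}(V_{\mathcal L,m})$ you write down), but there is a genuine error in your K\"unneth step. You correctly observe that the sum invariant $a_{e_i}+a_{\bar e_i}\ge -1$ forces $V_{\mathcal L,m}$ to be the join of the $(\tfrac n2-1)$-skeleta of two simplices on disjoint ``low'' and ``high'' vertex sets $P$ and $Q$. But it is \emph{not} true that such a join has reduced homology concentrated in degree $n-1$. If exactly one of $P,Q$ is empty and the other has more than $\tfrac n2$ elements, the join reduces to a single $(\tfrac n2-1)$-skeleton, which has nonzero $\widetilde H_{n/2-1}$; this contributes to $H^{n/2}(V_n,\mathcal L)$. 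So your ``$H^k=0$ for $0<k<n$ for free'' is false: there are line bundles with $a_{e_i}+a_{\bar e_i}\ge -1$ for all $i$ and nonzero $H^{n/2}$ (for example $-\sum_{i\in S}[\bar e_i]$ for any $S$ with $|S|>\tfrac n2$, taking $m=0$). Ruling out exactly this intermediate-degree contribution for differences of members of $\oldF_n$ is where the inequalities $|J|-\tfrac n4 \le c\le \tfrac n4$ and $\tfrac{n+2}4\le c\le |J|-\tfrac{n+2}4$ are used; that analysis (in the paper, Lemma~3.5 and the proof of Theorem~3.4, via forbidden cones with indexing set $\{e_i\}_{i\in S}$ or $\{\bar e_i\}_{i\in S}$, $|S|\ge \tfrac n2+1$) is the technical core, and your argument never touches the constraints on $c$, which should have been a warning sign.

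For fullness, you only sketch two possible strategies and acknowledge the real work is unfinished. The paper's actual argument is along the lines of your second proposal (run the MMP from $\bbP^n$ to $V_n$), but organized through variation of GIT and window categories rather than Orlov's blow-up formula: each wall-crossing gives a semi-orthogonal decomposition $\mathsf{D^b}(X^+_J)=\langle \O_{Z_J^+}(d-|J^c|),\dots,\O_{Z_J^+}(d-1-|J|),\mathsf{D^b}(X^-_J)\rangle$ (Proposition~5.9), and the collection $\oldF_n$ is shown to generate the wall contributions (via Koszul complexes on the contracting loci, Lemma~5.11) and to pass unchanged through the projection functor (Lemma~5.10). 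Your resolution-of-the-diagonal idea is not what the paper does and would require substantial new input, and the ``trimming the Koszul complex of the irrelevant ideal'' heuristic is not obviously compatible with the nontrivial window bounds at $|J|=\tfrac n2$. The $K_0$-rank count, as you note, only shows that fullness is \emph{consistent}; it does not prove $\mathcal A=0$.
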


Note that, in general, smooth projective toric Fano varieties do not
have full strong exceptional collections of line bundles \cite{Efimov},
let alone a highly symmetric such collection.

Recall that a \emph{form} of a $k$-variety $X$ is a $k$-variety $X'$
such that there is an isomorphism $X_K \cong X'_K$ after base change
to some field extension $K/k$.
From \cite[Proposition 3.7]{BDM}, we see that if a (split) toric variety has an
exceptional collection that is stable under the full automorphism of its
fan, then we obtain exceptional collections for its twisted forms.
Since $\Aut(V_n) \cong S_{n+1} \rtimes C_2$, this applies to the
varieties $V_n$.
In fact, \cite[Lemma 3.11]{BDM} allows us to extend this to products of
such varieties.
Since any centrally symmetric toric Fano variety is a product of
projective lines and the varieties $V_n$, the aforementioned results
yield the following:

\begin{cor}\label{cor:centsym}
Any form of a centrally symmetric toric Fano variety admits a full
strong exceptional collection consisting of vector bundles.
\end{cor}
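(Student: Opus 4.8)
The plan is to assemble the corollary from Theorem~\ref{thm:Vn}, the classification of \cite[Thm.~5]{VosKly}, the descent results \cite[Proposition~3.7, Lemma~3.11]{BDM}, and the elementary case of $\bbP^1$.

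First I would record the input for each factor. For $\bbP^1$, the fan in $\Z$ has the two rays $\Z_{\ge 0}$ and $\Z_{\le 0}$, so the fan automorphism group is $C_2$, generated by the map exchanging them; the two boundary divisors are linearly equivalent, so the standard full strong exceptional collection $\{\O,\O(1)\}$ has each of its two objects fixed and is therefore stable under the full automorphism group of the fan. For $V_n$, Theorem~\ref{thm:Vn} provides the collection $\oldF_n$, which is full, strong, and stable under the automorphism group of the fan $\Delta$ (the group denoted $S_{n+1}\times C_2$ in the statement, which is all of $\Aut(\Delta)$).

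Next I would pass to products. By \cite[Thm.~5]{VosKly}, every centrally symmetric smooth toric Fano variety over a separably closed field is isomorphic to $(\bbP^1)^{\times a}\times V_{n_1}\times\cdots\times V_{n_r}$ with the $n_i$ even. The exterior tensor product of the collections on the factors is again a full strong exceptional collection: fullness is inherited factor by factor, and strongness follows from the Künneth isomorphism $\Ext^\bullet(E\boxtimes F,E'\boxtimes F')\cong\Ext^\bullet(E,E')\tensor\Ext^\bullet(F,F')$. Since the fan of a product is the product of the fans, the full automorphism group of the fan of the product is generated by the fan automorphism groups of the individual factors together with the permutations of mutually isomorphic factors; the exterior product collection is visibly stable under the former, and stable under the latter because permuting identical factors merely permutes the tensor slots of each line bundle in the collection, carrying the collection onto itself. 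Hence every centrally symmetric toric Fano variety over $k^{\text{sep}}$ carries a full strong exceptional collection of line bundles stable under its entire fan automorphism group.

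Finally I would invoke descent. By \cite[Lemma~3.11]{BDM} together with \cite[Proposition~3.7]{BDM}, a $k$-form $X'$ of such an $X$ --- classified by a cocycle valued in the fan automorphism group, hence one that may permute identical factors in the Galois action --- inherits a full strong exceptional collection from the fan-automorphism-stable collection above, now consisting of vector bundles rather than line bundles, since the descent of a Galois orbit of size $d$ of line bundles is a bundle of rank $d$. The step I expect to require the most care is the product step: a $k$-form of $(\bbP^1)^{\times a}\times\prod V_{n_i}$ need not be a product of forms of the factors, so one cannot simply twist factor by factor, and one must check both that the exterior product collection is genuinely stable under the factor-permuting fan automorphisms and that \cite[Lemma~3.11]{BDM} is stated in the needed generality. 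I expect this to be bookkeeping rather than a genuine obstacle; everything else is quotation.
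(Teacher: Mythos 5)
Your proposal is correct and follows essentially the same route as the paper: cite Theorem~\ref{thm:Vn} for the $(S_{n+1}\times C_2)$-stable collection on $V_n$, use the classification of \cite{VosKly} to reduce to products of $\bbP^1$'s and $V_n$'s, and then invoke \cite[Proposition~3.7 and Lemma~3.11]{BDM} to descend. The paper treats the corollary as an immediate consequence and gives no further argument beyond citing these inputs; your elaboration of the $\bbP^1$ case, the K\"unneth argument for strongness of the exterior product, and the factor-permutation point are exactly the bookkeeping that \cite[Lemma~3.11]{BDM} is meant to absorb, including your (correct) caveat that a form of a product need not be a product of forms.
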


In \cite[Theorem 6.6]{CT}, Castravet and Tevelev exhibit a full strong $\Aut(\Delta)$-stable exceptional collection for $V_n$, where $\Delta$ denotes the fan associated to $V_n$.
The authors of this paper had independently discovered the same collection (up to a twist by a line bundle), as discussed in \cite[\S4.4]{BDM}. This article fleshes out these ideas. A particular benefit is that complexity in checking generation in \cite{CT} is lessened greatly by the methods here. In particular, we make use of forbidden cones in showing exceptionality and grade-restriction windows in showing fullness (i.e., that the collection generates the bounded derived category $\mathsf{D^b}(V_n)$). The distinct methods and perspective should be valuable in understanding 
more general situations. 

\subsection*{Acknowledgements}
\label{sec:hat_tip}

Via the first author, this material is based upon work supported by the National Science Foundation under Grant No.~NSF DMS-1501813. Via the second author, this work was supported by a grant from the Simons Foundation/SFARI (638961, AD) and by NSA grant H98230-16-1-0309. The third author was partially supported by an AMS-Simons travel grant. The authors would like to thank the anonymous referee for useful comments and a thorough reading of an earlier draft.

\section{Examples}

Let us explicitly describe the full exceptional collections in low-dimensional examples. We remind the reader that we utilize the aforementioned basis of $\text{Pic}(Y_n)$, and we also set $E: = \sum E_i$.

\begin{ex}[Dimension 2]
Applying the inequalities given in Theorem~\ref{thm:Vn},
we see that the pairs $(c,|J|)$ that occur are
$F_2 = \{(0, 0), (1, 2), (1,3), (2, 3)\}$.
Each of these pairs gives an $S_3$-orbit of bundles on $V_2$.

\begin{center}
\begin{tabular}{| c | l |}
\hline
$(c,|J|)$ & $S_3$-orbit \\
\hline
$(2, 3)$ & $\O(E - 2H)$ \\
\hline
$(1, 3)$ & $\O(-H)$ \\
\hline
$(1, 2)$ & $\O( E_1-H), \O(E_2 -H), \O(E_3 -H)$ \\
\hline
$(0, 0)$ & $\O$ \\
\hline
\end{tabular}
\end{center}
\end{ex}

\noindent Notice that this is the exceptional collection on $V_2 =
\mathsf{dP}_6$ (the del Pezzo surface of degree 6) which is the dual  of
that given in \cite[Prop. 6.2)(ii)]{King}. This collection was also recovered in \cite{BSS}. Recall that the antipodal involution acts on these
orbits via $(c, \ell) \mapsto (\ell -c , \ell)$, so that $(1, 3) \mapsto
(2, 3)$.  We thus obtain (orthogonal) blocks $\mathsf{E}_i$ given by the $(S_{3} \times
C_2)$-orbits:
\[
\begin{tabular}{| r | l |}
\hline
 $ \mathsf{E}_2 $& $ \O(-H), \O(E-2H)$\\
\hline
$ \mathsf{E}_1$ & $\O( E_1-H), \O(E_2 -H), \O(E_3 -H)$ \\
\hline
$\mathsf{E}_0$ & $ \O$ \\
\hline
\end{tabular}
\]

\begin{ex}[Dimension 4]
The variety $V_4$ is exactly (116) in the enumeration of \cite{Prabhu} or
(118) in the enumeration of \cite{Batyrev}.
Applying the inequalities given in Theorem~\ref{thm:Vn},
we see that the possible $(c,|J|)$ make up the set
\[
F_4 :=
\{(-1,0), (0,0), (1,0), (0, 1), (1, 1), (1, 2), (2, 4), (2, 5), (3, 5)\}.
\]
Each of these pairs gives an $S_5$-orbit of bundles on $V_4$. 

\begin{center}
\begin{tabular}{| c | l |}
\hline
$(c,|J|)$ & $S_5$-orbit \\
\hline
$(3, 5)$ & $\O(2E - 3H) $ \\
\hline
$(2, 5)$ & $\O(E-2H) $ \\
\hline
$(2, 4)$ & $\O(E -2H + E_1), \O(E -2H + E_2), \O(E -2H + E_3), \O(E -2H + E_4), \O(E -2H +  E_5)$\\
\hline
$(1,2)$ & $\O(E - H - E_1 -E_2), \O(E - H - E_1 -E_3), \O(E - H - E_1 -E_4), \O(E - H - E_1 -E_5),$\\
&  $\O(E - H - E_2 -E_3),\O(E - H - E_2 -E_4), \O(E - H - E_2 -E_5),$ \\
& $\O(E - H - E_3 -E_4), \O(E - H - E_3 -E_5), \O(E - H - E_4 -E_5)$\\
\hline
$(1, 1)$ & $\O(E - H -E_1), \O(E - H - E_2), \O(E - H - E_3), \O(E - H - E_4), \O(E - H - E_5)$ \\
\hline
$(1,0)$ & $\O(E -H) $ \\
\hline
$(0, 1)$ &$ \O(-E_1), \O(-E_2), \O(-E_3), \O(-E_4), \O(-E_5) $ \\
\hline
$(0,0)$ & $\O$ \\
\hline
$(-1, 0)$ &$ \O(-E + H)$ \\
\hline
\end{tabular}
\end{center}

The antipodal involution acts on these orbits via $(-1, 0)
\mapsto (1, 0) $, $(0, 1) \mapsto (1,1)$, and $(2, 5)
\mapsto (3, 5),$ leaving the others fixed. We thus obtain
(orthogonal) blocks $\mathsf{E}_i$ given by the $(S_5\times
C_2)$-orbits:

\begin{center}
\begin{tabular}{| r | l |}
\hline
$\mathsf{E}_5 $&  $\O(E-2H), \O(2E -3H) $\\
\hline
$\mathsf{E}_4$ & $ \O(E -2H + E_1), \O(E -2H + E_2), \O(E -2H + E_3),$ \\
&$\O(E -2H + E_4), \O(E -2H +  E_5) $ \\
\hline
$\mathsf{E}_3$ & $\O(E - H - E_1 -E_2), \O(E - H - E_1 -E_3), \O(E - H - E_1 -E_4),$\\
&  $ \O(E - H - E_1 -E_5), \O(E - H - E_2 -E_3),\O(E - H - E_2 -E_4),$ \\
& $\O(E - H - E_2 -E_5) \O(E - H - E_3 -E_4), \O(E - H - E_3 -E_5),$\\
&  $ \O(E - H - E_4 -E_5) $\\
\hline
$\mathsf{E}_2$ & $ \O(-E_1), \O(-E_2), \O(-E_3), \O(-E_4), \O(-E_5), \O(E - H -E_1)$ \\
& $ \O(E - H - E_2), \O(E - H - E_3), \O(E - H - E_4), \O(E - H - E_5)$\\
\hline
$\mathsf{E}_1$ & $\O(-E + H), \O(E -H)$ \\
\hline
$\mathsf{E}_0$ &  $\O$ \\
\hline
\end{tabular}
\end{center}
\end{ex}

\section{Exceptionality via forbidden cones}

We begin by recalling definitions of exceptional objects and collections. We then apply the theory of forbidden cones, put forth by Borisov and Hua \cite{BH}, to show that the collection described above is exceptional and stable under the action of the group $S_{n+1} \times C_2$. For a $k$-scheme $X$, we let $\mathsf{D^b}(X) = \mathsf{D^b}(\text{coh} X)$ denote the bounded derived category of coherent sheaves on $X$. It is a $k$-linear triangulated category.

\begin{defn}\label{def:exceptional}
Let $\mathsf{T}$ be a $k$-linear triangulated category.  An object $E$ in $\mathsf{T}$ is \emph{exceptional} if the following conditions hold:
\begin{enumerate}
\item $\text{End}_{\mathsf{T}}(E)$ is a division $k$-algebra.
\item $\text{Ext}^n_{\mathsf{T}}(E, E) := \text{Hom}_{\mathsf{T}}(E, E[n]) = 0$ for $n \neq 0$.
\end{enumerate}
A totally ordered set $\mathsf{E} = \{E_1, ..., E_s\}$ of exceptional objects in $\mathsf{T}$ is an \emph{exceptional collection} if $\text{Ext}^n_{\mathsf{T}}(E_i, E_j)   =  0$ for all integers $n$ whenever $i >j$.  An exceptional collection is $\emph{full}$ if it generates $\mathsf{T}$, i.e., the smallest thick subcategory of $\mathsf{T}$ containing $\mathsf{E}$ is all of $\mathsf{D^b}(X)$.  An exceptional collection is \emph{strong} if $\text{Ext}^n_{\mathsf{T}}(E_i, E_j) = 0$ whenever $n \neq 0$.  An \emph{exceptional block} is an exceptional collection $\mathsf{E} = \{ E_1, ..., E_s\}$ such that $\text{Ext}^n_{\mathsf{T}}(E_i, E_j) = 0$ for every $n$ whenever $i \neq j$.
\end{defn}

\begin{defn}
Let $X$ be a scheme with an action of a group $G$.  Any element $g
\in G$ induces a functor $g^*: \mathsf{D^b}(X) \to \mathsf{D^b}(X)$. A $G$-\emph{stable
exceptional collection} on $X$ is an exceptional collection $\mathsf{E}
= \{E_1, ..., E_s\}$ of objects in $\mathsf{D^b}(X)$ such that
for all $g \in G$ and $1 \leq i \leq s$ there exists $E \in \mathsf{E}$
such that $g^*E_i \simeq E$.
\end{defn}

Let us now investigate exceptionality of the collection $\oldF_n$ described in Theorem~\ref{thm:Vn}. It will be useful for our calculations to consider a larger collection of bundles grouped into $S_{n+1}$-orbits. Conceptually, this gives a nice picture of (orbits of objects in) this collection as shown in Figures \ref{fig:graph2}, \ref{fig:graph4}, \ref{fig:graph6}, and \ref{fig:graph8}.
Suppose $\ell,k$ are nonnegative integers such that $k+\ell \le n+1$.
Let $F(c,k,\ell)$ be the $S_{n+1}$-orbit of the divisor
\[
c\left(\sum_{i=0}^{n}E_i - H\right) + \sum_{i=0}^{k-1}E_i
- \sum_{i=k}^{k+\ell-1}E_i \ .
\]
Notice that when $k=0$, the set $F(c, k, \ell)$ is just the
$S_{n+1}$-orbit of the divisor $F_{c, J}$, where $J$ is any subset of $\{0, 1, \ldots, n\}$ of cardinality $\ell$. Also note that $F(c,k,\ell)$ is \emph{not} necessarily stable under the
antipodal involution, and in the case $k=0$, the antipodal involution takes $F(c,0,\ell)$ to $F(\ell-c,0,\ell)$.

\begin{thm} \label{thm:dpvariety}
Let $F_n \subseteq \Z^2$ be the set of $(c,\ell)$ where $0 \le \ell \le n+1$
satisfying one of the following two conditions:
\begin{enumerate}
\item $\ell \le \frac{n}{2},\ \ell-\frac{n}{4} \le c \le \frac{n}{4}$, or
\item $\ell \ge \frac{n}{2}+1,\ \frac{n+2}{4} \le c \le \ell - \frac{n+2}{4}$.
\end{enumerate}
Then the collection of line bundles $\O(D)$ for all $D$ in $F(c,0,\ell)$ for all $(c,\ell)$ in $F_n$ coincides with the set $\oldF_n$ and
forms a strong $(S_{n+1} \times C_2)$-stable exceptional collection under any ordering of the
blocks such that $\ell$ is (non-strictly) decreasing.
\end{thm}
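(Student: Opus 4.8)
The plan is to prove Theorem~\ref{thm:dpvariety} in four steps: settle the identification with $\oldF_n$ and the $(S_{n+1}\times C_2)$-stability; reduce strong exceptionality to a short list of cohomology vanishings on $V_n$; put the relevant divisor classes into a two-parameter normal form; and carry out the cohomology computation with the forbidden-cone machinery of \cite{BH}. For the first step, setting $k=0$ in the formula defining $F(c,k,\ell)$ gives exactly the $S_{n+1}$-orbit of $F_{c,J}$ with $|J|=\ell$, and conditions (1)--(2) are literally those of Theorem~\ref{thm:Vn} under the substitution $\ell=|J|$, which identifies the collection with $\oldF_n$; it is $S_{n+1}$-stable by construction. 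For the antipodal involution, recall that it sends $\O(F_{c,J})$ to $\O(F_{|J|-c,J})$, hence acts on admissible pairs by $(c,\ell)\mapsto(\ell-c,\ell)$, and in each of (1) and (2) the permitted interval for $c$ is symmetric about $\ell/2$; thus $F_n$, and hence $\oldF_n$, is preserved, and the blocks are precisely the $(S_{n+1}\times C_2)$-orbits, each a union of one or two of the orbits $F(c,0,\ell)$ sharing a fixed $\ell$.

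Since $V_n$ is smooth and complete and every member of the collection is a line bundle, $\Ext^p_{\mathsf{D^b}(V_n)}(\O(D),\O(D'))\cong H^p(V_n,\O(D'-D))$, and each $\O(D)$ is exceptional because $H^0(V_n,\O)=k$ is a field and $H^{>0}(V_n,\O)=0$. Unwinding Definition~\ref{def:exceptional} for the orderings allowed in the theorem — the order inside a fixed-$\ell$ layer being unconstrained — what remains is: (A) $H^p(V_n,\O(D-D'))=0$ for all $p\ge 0$ whenever $D\ne D'$ lie in $\oldF_n$ with $\ell(D)\ge\ell(D')$; and (B) $H^p(V_n,\O(D-D'))=0$ for all $p>0$ whenever $D\ne D'$ lie in $\oldF_n$. (For two bundles in distinct blocks of equal $\ell$, (A) applies in both directions, which with (B) makes that whole layer a single orthogonal block, exactly what is needed for the order within it to be immaterial.)

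For the normal form, write $D=c(\sum_i E_i - H)-\sum_{j\in J}E_j$ and $D'=c'(\sum_i E_i - H)-\sum_{j\in J'}E_j$ with $|J|=\ell$ and $|J'|=\ell'$, and set $a=|J\setminus J'|$, $b=|J'\setminus J|$, so that $a-b=\ell-\ell'$ and $a+b\le n+1$. Then $D-D'=(c-c')(\sum_i E_i - H)+\sum_{j\in J'\setminus J}E_j-\sum_{j\in J\setminus J'}E_j$ is, up to an element of $S_{n+1}$, a representative of the orbit $F(c-c',b,a)$. Hence $H^\bullet(V_n,\O(D-D'))$ depends only on the triple $(c-c',b,a)$, and conditions (A) and (B) become numerical statements about the triples $(c'',k,\ell'')=(c-c',b,a)$ arising from pairs in $F_n$, with $\ell''\ge k$ precisely in case (A).

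The crux is therefore a cohomology computation for the bundles $\O(F(c,k,\ell))$ on $V_n$, which I would carry out with the forbidden-cone formalism of Borisov and Hua \cite{BH}: in terms of the maximal cones $\{e_i\}_{i\in A}\cup\{\bar e_i\}_{i\in B}$ of $\Delta$ one describes the forbidden subfans attached to a torus-invariant divisor and a character, and hence the locus in $\Pic(V_n)_{\real}$ of classes with nonvanishing higher cohomology, together with the effective cone. The special shape of $\Delta$ — each maximal cone omits exactly one of the $n+1$ coordinate directions, and the whole configuration is $S_{n+1}$-symmetric — should collapse this to an explicit pair of linear inequalities in $(c,k,\ell)$ governing the vanishing of $H^{>0}(V_n,\O(F(c,k,\ell)))$, plus a separate effectivity inequality governing $H^0$. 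The proof then finishes by verifying that, for any two admissible pairs $(c,\ell)$ and $(c',\ell')$ of $F_n$, the triple $(c-c',b,a)$ meets the higher-cohomology inequalities unconditionally and the effectivity inequality whenever $a\ge b$; by the interval form of (1)--(2) this is a finite check, which I would organize around the three regimes $\ell,\ell'\le\tfrac n2$, both $\ge\tfrac n2+1$, and mixed, reducing each to the extremal values of $c$. I expect this last verification — making it uniform in $n$, reconciling the forbidden-cone inequalities with the floors and ceilings hidden in the constraint $\ell-\tfrac n4\le c\le\tfrac n4$ as $n$ ranges over residues modulo $4$, and handling the antipodal pairing across $\ell=\tfrac n2$ — to be the principal obstacle; extracting the forbidden-cone criterion for $V_n$ itself is the conceptual step but, given \cite{BH}, is essentially mechanical.
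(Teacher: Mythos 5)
Your proposal follows the paper's own route step for step: the normal form for differences you derive (triples $(c-c',b,a)$ with $a=|J\setminus J'|$, $b=|J'\setminus J|$) is exactly the paper's Lemma~\ref{lem:difference}, the forbidden-cone criterion you invoke is the paper's Lemma~\ref{lem:acyclic}, and your proposed organization into the three regimes $\ell,\ell'\le\tfrac n2$, both $>\tfrac n2$, and mixed is precisely the three-case split in the paper's proof of Theorem~\ref{thm:dpvariety}. The identification with $\oldF_n$ and the $(S_{n+1}\times C_2)$-stability observation also match what the paper says. So this is the same approach, not a different one.

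That said, you have laid out a plan rather than a proof: the two steps you flag as ``the principal obstacle'' — extracting the explicit forbidden-cone inequalities for $\O(F(c,k,\ell))$ on $V_n$, and then verifying the resulting numerical conditions across the three regimes — are exactly where the paper's Lemma~\ref{lem:acyclic} and its three-case verification live, and you do not carry them out. One thing you do better than the paper as written: you make explicit (your condition (B)) that strongness requires $H^{>0}$-vanishing even for differences $D-D'$ with $\ell(D)<\ell(D')$, i.e.\ with $k>\ell$ in the normal form, whereas the paper's Lemma~\ref{lem:acyclic} is stated only for $k\le\ell$ and the proof of Theorem~\ref{thm:dpvariety} applies it only in that range. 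In the $k>\ell$ direction, the hypothesis $k\le\ell$ in Lemma~\ref{lem:acyclic} is used only to show the bundle is not effective (so $H^0=0$); the higher-cohomology forbidden cones still rule out $H^{>0}$, which is all strongness needs — but this is not spelled out in the paper's argument and you correctly flag it as a point requiring verification. To complete your plan, you would need to (i) actually derive the pair of linear inequalities from the circuit/primitive-collection structure of $\Delta$, separating the effective-cone condition (governing $H^0$) from the proper forbidden cones (governing $H^{>0}$), and (ii) run the three-regime check with the asymmetric conclusions in (i), confirming that (A) holds when $a\ge b$ and (B) holds unconditionally.
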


The fact that the collection described above coincides with $\oldF_n$ given in Theorem~\ref{thm:Vn} is obvious.  The remainder of this subsection is devoted to proving the above theorem.

\begin{figure}

\begin{subfigure}[t]{0.4\textwidth}
\begin{tikzpicture}[thick,scale=0.7, every node/.style={scale=0.7}]

        \draw[step=1,help lines,black!20] (-1.95,-0.95) grid (3.95,5.95);
        \draw[thick,->] (-1,0) -- (4,0) node [above right] {$c$};
        \draw[thick,->] (0,-1) -- (0,6)node [right] {$\ell$};

        \foreach \Point/\PointLabel in {(0,0)/, (1,2)/, (1,3)/, (2,3)/}
        \draw[fill=black] \Point circle (0.1) node[above right] {$\PointLabel$};

    \end{tikzpicture}
    \centering
     \caption{The set $F_2$}\label{fig:graph2}
    \end{subfigure}
  \begin{subfigure}[t]{0.4\textwidth}
    \begin{tikzpicture}[thick,scale=0.7, every node/.style={scale=0.7}]

        \draw[step=1,help lines,black!20] (-1.95,-0.95) grid (3.95,5.95);
        \draw[thick,->] (-1,0) -- (4,0) node [above right] {$c$};
        \draw[thick,->] (0,-1) -- (0,6)node [right] {$\ell$};

        \foreach \Point/\PointLabel in {(-1,0)/, (0,0)/, (1,0)/, (0,1)/, (1,1)/, (1, 2)/, (2, 4)/, (2, 5)/, (3,5)/ }
        \draw[fill=black] \Point circle (0.1) node[above right] {$\PointLabel$};

    \end{tikzpicture}
    \centering
        \caption{The set $F_4$}\label{fig:graph4}
        \end{subfigure}
    \end{figure}
    
Recall that if $L_1, L_2$ are line bundles,
then $\Ext^i(L_1,L_2) = H^i(X,L_1^{-1} \otimes L_2)$.
Thus, in order to show that collections of line bundles are strongly
exceptional, we need to understand the cohomology of lines bundles.

\begin{lem} \label{lem:acyclic}
Suppose $k \le \ell$.
Then, for all non-zero $D$ in $F(c,k,\ell)$, the line bundle $\O(D)$
satisfies $H^i(X,\O(D))=0$ for all $i$ provided the following conditions hold:
\begin{align*}
-\frac{n}{2}+\ell \le&\ c\ \le \frac{n}{2}-k
&&\textrm{ if } \ell \le \frac{n}{2}\\
1 \le &\ c\ \le \ell -k-1
&&\textrm{ if } \ell  > \frac{n}{2} \ .
\end{align*}
\end{lem}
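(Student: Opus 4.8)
The plan is to compute the cohomology of $\O(D)$ on the split toric variety $V_n$ directly from the combinatorial description of its fan $\Delta$, using the standard toric formula expressing $H^i(V_n, \O(D))$ in terms of the reduced cohomology of certain subsets of $\real^n$ cut out by the support function of $D$. Concretely, for a torus-invariant divisor $D = \sum_\rho a_\rho D_\rho$ one has
\[
H^i(V_n,\O(D)) \;=\; \bigoplus_{u \in M} H^i\!\bigl(V_{D,u}\bigr),
\]
where $M$ is the character lattice and $V_{D,u} \subseteq |\Delta|$ is the union of those cones $\sigma$ for which $\langle u, e_\rho\rangle \ge -a_\rho$ fails to hold for at least one ray $\rho$ of $\sigma$ — equivalently one works with the complementary "forbidden" region. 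Since $\Delta$ has the very explicit maximal cones $\{e_i\}_{i\in A}\cup\{\bar e_i\}_{i\in B}$ with $|A|=|B|=n/2$, each inequality $\langle u, e_i\rangle \ge -a_i$ or $\langle u, \bar e_i\rangle \ge -\bar a_i$ becomes a single affine-linear condition on the coordinates of $u$, and the divisor classes $[e_i]=E_i$, $[\bar e_i]=(H-\sum E_j)+E_i$ translate the hypothesis on $(c,k,\ell)$ into explicit bounds on these coefficients. First I would fix the representative $D = c(\sum E_i - H) + \sum_{i<k}E_i - \sum_{k\le i<k+\ell}E_i$ of the class, express it as a torus-invariant divisor $\sum_\rho a_\rho D_\rho$, and read off each $a_\rho$ as a small integer (values like $c$, $c\pm 1$, or related to $\ell, k$).

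Next I would show that under the stated inequalities the forbidden region is either empty or contractible for every $u \in M$, which forces all reduced cohomology to vanish and hence $H^i(V_n,\O(D)) = 0$ for all $i$. The two cases $\ell \le n/2$ and $\ell > n/2$ correspond to the two displayed inequality ranges, and in each case the key point is a counting argument: because $A$ and $B$ partition a size-$n/2$-plus-$n/2$ collection of the $n+1$ indices $\{0,\dots,n\}$ (with one index omitted), a character $u$ can violate the ray-inequality for $e_i$ only for $i$ in a controlled set, and similarly for $\bar e_i$; the bounds on $c$ are exactly what is needed to guarantee that these two "bad index sets" cannot simultaneously be large enough to make the forbidden locus disconnected or to wrap around. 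I would organize this via the support-function / piecewise-linearity description: $D$ corresponds (rationally, since $V_n$ and $Y_n$ agree in codimension $1$) to comparing $u$ against $-a_\rho$ ray by ray, and the inequalities $-\tfrac n2+\ell \le c \le \tfrac n2 - k$ (resp. $1\le c\le \ell-k-1$) are precisely the conditions ensuring no $u$ produces a "hole."

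The main obstacle I anticipate is the combinatorial bookkeeping in the $\ell > n/2$ case: there the relevant forbidden regions involve simultaneously many $e_i$-conditions and many $\bar e_i$-conditions, and one must carefully verify that for every $u$ the union of the offending maximal cones is contractible (a star-shaped or cone-like subset of $\real^n$) rather than, say, an annular region that would contribute to $H^1$. I would handle this by reducing to the $S_{n+1}$-symmetry — it suffices to treat the "standard" representative of the orbit $F(c,k,\ell)$ — and then, for a given character $u=(u_1,\dots,u_n)$, partitioning $\{0,\dots,n\}$ according to the signs and sizes of the quantities $\langle u, e_i\rangle + a_i$. A short case analysis on which of these are negative, together with the arithmetic constraints on $c$, shows the forbidden set is always a cone over a simplex (hence contractible) or empty. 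Finally, I would note that the special case $k=0$ recovers exactly the bundles $\O(F_{c,J})$ appearing in $\oldF_n$, so that restricting Lemma~\ref{lem:acyclic} to $k=0$ and applying $\Ext^i(L_1,L_2)=H^i(V_n,L_1^{-1}\otimes L_2)$ to differences of bundles in the collection will, in the subsequent argument, yield the vanishing needed for strong exceptionality.
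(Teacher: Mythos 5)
Your overall strategy — reduce to the toric formula expressing $H^i(V_n,\O(D))$ as a sum over characters of reduced cohomologies of subcomplexes of the fan, then show each is acyclic — lives in the same circle of ideas as the paper. But the paper does not actually argue character-by-character. It uses the Borisov--Hua \emph{forbidden cone} reformulation, which works dually in $\Pic(V_n)\otimes\real$: for each subset $I\subseteq\Delta(1)$ whose associated simplicial complex $C_I$ has nonzero reduced homology, one gets a polyhedral cone $F_I\subseteq\Pic(V_n)_\real$, and $H^*(X,L)=0$ follows whenever $L$ avoids every $F_I$. This is a genuine packaging advantage: one forbids an entire cone of line bundles at once, and, crucially, the paper invokes Efimov's result that every nonempty indexing set of a forbidden cone is a union of \emph{primitive collections}. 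For $V_n$ the primitive collections are $\{e_i,\bar e_i\}$, $\{e_i\}_{i\in S}$, and $\{\bar e_i\}_{i\in S}$ with $|S|\ge n/2+1$, which collapses the case analysis to three short computations. Your proposal has no analogue of this reduction, so the "combinatorial bookkeeping" you flag as the main obstacle is precisely the part that is genuinely hard without it — the case $\ell>n/2$ involves a large zoo of bad-ray sets, and showing directly for every character $u$ that the deficiency region is contractible would require re-deriving the primitive-collection structure by hand.

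Beyond the inefficiency, there is a concrete error in your setup. You describe $V_{D,u}$ as "the union of those cones $\sigma$ for which $\langle u,e_\rho\rangle\ge -a_\rho$ fails to hold for at least one ray $\rho$ of $\sigma$." That set is almost always all of $|\Delta|$ — any maximal cone will typically contain at least one bad ray — so its reduced cohomology is trivially zero and you would (incorrectly) conclude that all line bundles are acyclic. The correct object is the simplicial complex $C_I$ on the set $I=\{\rho : \langle u, u_\rho\rangle < -a_\rho\}$ of \emph{bad} rays, whose faces are those subsets of $I$ spanning a cone of $\Delta$; equivalently, the union over cones $\sigma$ of the convex hull of the bad rays of $\sigma$, not of $\sigma$ itself. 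With the wrong region the computation would not detect any cohomology, so the argument would not prove what is claimed. Fixing this and supplying the missing case analysis (ideally via Efimov's reduction) would bring your proposal in line with the paper's proof.
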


\begin{proof}
We use the notion of \emph{forbidden cone} introduced by
Borisov and Hua in Section 4 of \cite{BH}
(see also \cite{Efimov}).
Given a subset $I \subseteq \Delta(1)$, we obtain a simplicial complex
$C_I$ where $J \in C_I$ if and only $J \subseteq I$
and there is a cone $\sigma_J$ in $\Delta$ with rays $J$.
If $C_I$ has non-trivial reduced homology, then we obtain
a \emph{forbidden cone} in $\Pic(X)$ given by
\[
F_I=
\left\{ \sum_{\rho \in I} (-1-r_\rho)[D_\rho] + \sum_{\rho \notin I} r_\rho [D_\rho]
\ \middle| \ r_\rho \in \mathbb{R}, r_\rho > 0 \right\}.
\]
A line bundle $L$ satisfies $H^i(X,L)$ for all $i$ if it does not lie in
any forbidden cone (but not conversely, in general).

Note that when $I=\Delta(1)$, we see that $F_I$ is the effective cone,
which explains the criterion for $H^0$.
Unlike us,
Borisov and Hua restrict the definition of ``forbidden cones'' to
\emph{proper} subsets of $I$, but we find including the effective cone
to be more useful in our situation.

Let us use the basis
\[
\left\{\Omega = \left( \sum_{i=0}^{n} E_i -H \right), E_0, \ldots, E_{n} \right\}
\]
for $\Pic(V_n)$, and let $\beta$ be a divisor class in $F(c,k,l)$.
Let $b_\natural$ be the coefficient of $\Omega$ in $\beta$ and $b_i$ the
coefficient of $E_i$ in $\beta$.
Thus $b_\natural=c$ and the other coefficients are in $\{-1,0,1\}$.
Let $L,N,K$ denote the subset of $\{0,\ldots,n\}$ corresponding to
the number of $-1$'s ,$0$'s, and $1$'s, respectively.  Note that $k=|K|$,
$\ell=|L|$ and $|N|=n+1-\ell-k$.

In this basis, $e_i \mapsto E_i$ and $\bar{e}_i \mapsto E_i- \Omega$.
Let us compute the possible coefficients $b_i$ in a forbidden cone with
indexing set $I$.  We use the notation $r_i$ for $r_{e_i}$ and
$\bar{r}_i$ for $r_{\bar{e}_i}$.
For each $i \in \{0,\ldots, n\},$ we have the following table:
\begin{center}
\begin{tabular}{ccc}
$\subseteq I$ & $b_i$ & $b_\natural$ \\
\hline
$\varnothing$ & $r_i+\bar{r}_i$ & $-\bar{r}_i$ \\
$\{e_i\}$ & $-1-r_i+\bar{r}_i$ & $-\bar{r}_i$ \\
$\{\bar{e}_i\}$ & $-1+r_i-\bar{r}_i$ & $1+\bar{r}_i$ \\
$\{e_i,\bar{e}_i\}$ & $-2-r_i-\bar{r}_i$ & $1+\bar{r}_i$
\end{tabular}
\end{center}
where the $b_\natural$ column really just records the contribution from $e_i$ and
$\bar{e}_i$.

\begin{figure}

\begin{subfigure}[t]{0.4\textwidth}

  \begin{tikzpicture}[thick,scale=0.7, every node/.style={scale=0.7}]

        \draw[step=1,help lines,black!20] (-1.95,-0.95) grid (5.95,7.95);
        \draw[thick,->] (-1,0) -- (6,0) node [above right] {$c$};
        \draw[thick,->] (0,-1) -- (0,8) node [right] {$\ell$};

        \foreach \Point/\PointLabel in {(-1,0)/, (0,0)/, (1,0)/, (0,1)/, (1,1)/, (1, 2)/, (2, 4)/, (2, 5)/, (2,6)/, (2, 7)/, (3,5)/, (3, 6)/, (3, 7)/, (4,6)/, (4, 7)/, (5,7)/ }
        \draw[fill=black] \Point circle (0.1) node[above right] {$\PointLabel$};

    \end{tikzpicture}
   \centering
     \caption{The set $F_6$}\label{fig:graph6}
    \end{subfigure}
  \begin{subfigure}[t]{0.4\textwidth}
     \begin{tikzpicture}[thick,scale=0.7, every node/.style={scale=0.7}]

        \draw[step=1,help lines,black!20] (-2.95,-0.95) grid (6.95,9.95);
        \draw[thick,->] (-3,0) -- (7,0) node [above right] {$c$};
        \draw[thick,->] (0,-1) -- (0,10) node [right] {$\ell$};

        \foreach \Point/\PointLabel in {(-2,0)/, (-1, 0)/, (-1, 1)/, (0,0)/, (0,1)/, (0,2)/, (1,0)/, (1,1)/, (1, 2)/, (1, 3)/, (2,0)/, (2,1)/, (2,2)/, (2,3)/, (2,4)/, (3, 6)/, (3, 7)/, (3,8)/, (3,9)/, (4, 7)/, (4, 8)/, (4,9)/, (4, 8)/, (4, 9)/, (5, 8)/, (5, 9)/, (6, 9)/  }
        \draw[fill=black] \Point circle (0.1) node[above right] {$\PointLabel$};

    \end{tikzpicture}
    \centering
        \caption{The set $F_8$}\label{fig:graph8}
        \end{subfigure}
    \end{figure}

Note that if both $e_i$ and $\bar{e}_i$ are contained in
$I$ then the corresponding forbidden cone $F_I$ has $b_i \le -2$. No divisor of $F_n$ lies in a forbidden cone $F_I $ where $I$ contains both $e_i$ and $\bar{e}_i$.
Thus, if our line bundle has non-trivial $H^i$, then it must be contained in a forbidden
cone corresponding to $I$ where $e_i$ and $\bar{e}_i$ do not both appear.
Note that the values of $r_i$ are irrelevant for $b_\natural$.
With this in mind, we determine the possible values of the contributions
to $b_\natural$ given a fixed value of $b_i$. 
They are:
\begin{center}
\begin{tabular}{cccc}
$b_i$ & $\subseteq I$ & $\bar{r}_i$ & $b_\natural$\\
\hline
$1$ & $\varnothing$ & $[0,1]$ & $[-1,0]$\\
 & $\{e_i\}$ & $[2,\infty)$ & $(-\infty,-2]$\\
 & $\{\bar{e}_i\}$ & $[0,\infty)$ & $[1,\infty)$\\
\hline
$0$ & $\varnothing$ & $\{0\}$ & $\{0\}$\\
 & $\{e_i\}$ & $[1,\infty)$ & $(-\infty,-1]$\\
 & $\{\bar{e}_i\}$ & $[0,\infty)$ & $[1,\infty)$\\
\hline
$-1$ & $\varnothing$ & $\varnothing$ & $\varnothing$\\
 & $\{e_i\}$ & $[0,\infty)$ & $(-\infty,0]$\\
 & $\{\bar{e}_i\}$ & $[0,\infty)$ & $[1,\infty)$\\
\end{tabular}
\end{center}

Recall that a \emph{primitive collection} is a minimal subset $C$ of the set of rays
$\Delta(1)$ such that $C$ is not contained in any cone of $\Delta$.
In \cite[Lem. 4.4]{Efimov}, Efimov shows that every (nonempty) indexing set for a
forbidden cone is a union of primitive collections.
Recall that the maximal cones of $\Delta$ contain a set of rays of the form
$\{ e_i \}_{i \in A} \cup \{ \bar{e}_i \}_{i \in B}$
where $A$ and $B$ are disjoint subsets of $\{0, \ldots, n\}$, each
of cardinality $\frac{n}{2}$.
Thus the primitive collections are of the form $\{e_i,\bar{e}_i\}$,
$\{ e_i \}_{i \in S}$ and $\{ \bar{e}_i \}_{i \in S}$
where $S$ is a subset of $\{0, \ldots, n\}$ of cardinality $\ge \frac{n}{2}+1$.

Above we saw that $\beta$ does not lie in any forbidden cone
whose indexing set contains $\{e_i,\bar{e}_i\}$ for any $i$.
Thus we may assume that either $I = \{e_i\}_{i \in S}$
or $I = \{\bar{e}_i\}_{i \in S}$ where $S$ is a subset of
$\{0, \ldots, n\}$ of cardinality $\ge \frac{n}{2}+1$ or $S$ is empty.
Note that we may assume $L \subseteq S$ since there is no way to
produce negative $b_i$ for $i \in L \setminus S$.
We get contributions to $b_\natural$ for each element of various subsets as
follows:
\begin{center}
\begin{tabular}{ccc}
 & $I = \{e_i\}_{i \in S}$ & $I = \{\bar{e}_i\}_{i \in S}$\\
\hline
$L$ & $(-\infty,0]$ & $[1,\infty)$ \\
$K \cap S$ & $(-\infty,-2]$ & $[1,\infty)$ \\
$K \setminus S$ & $[-1,0]$ & $[-1,0]$ \\
$N \cap S$ & $(-\infty,-1]$ & $[1,\infty)$ \\
$N \setminus S$ & $\{0\}$ & $\{0\}$ \\
\end{tabular}
\end{center}

First, let us assume $I = \varnothing$, i.e., the corresponding
forbidden cone is the effective cone.
Here $S = \varnothing$.
To be forbidden, we require $L = \varnothing$.  Our standing assumption is
that $k \le \ell$ so $K = \varnothing$ as well.
It follows that $b_\natural=0$, and the trivial line bundle is the only one of the form $F(c,k,\ell)$
lying in the effective cone.

Now, we assume that $I = \{e_i\}_{i \in S}$.
We see that
\[ b_\natural \le -2|K \cap S| -|N \cap S|. \]
We want to select $S$ to forbid as much as possible.
If $\ell > \frac{n}{2}$ then we may select $S=L$ and we forbid $c \le 0$.
If $\ell \le \frac{n}{2}$ then, since $k \le \ell$, the weakest bound
is obtained by selecting $S$
such that $|N \cap S|=\frac{n}{2}+1-\ell$ where we forbid
$c \le -\frac{n}{2}+\ell-1$.  Indeed, to maximize $-2|K \cap S| - |N \cap S|$, we take $S$ to have minimal size: $|S| = \frac n2 + 1$. Since $L \subseteq S$, we have $|S\setminus L| = \frac n 2 + 1 - \ell$. Note that $|N| = n + 1 - k -\ell$ and $|N| - |S\setminus L| = \frac n 2 + k >0$. Thus, the maximum occurs when $|N \cap S | = |S \setminus L| = \frac{n}{2}+1-\ell$.

Now we assume that $I = \{\bar{e}_i\}_{i \in S}$.
We see that
\[ b_\natural \ge |L| + |K \cap S| - |K \setminus S| + |N \cap S| . \]
Or, since $L \subseteq S$, we have $b_\natural \ge |S| - |K \setminus S|$.
Again, we want to select $S$ so as to forbid as much as possible.
If $\ell \le \frac{n}{2}$ then since $k \le \ell$, we may select
$|S|=\frac{n}{2}+1$ of minimal size, and $K \cap S = \varnothing$.
Thus we forbid $c \geq \frac{n}{2}+1-k$.
If $\ell > \frac{n}{2}$ then we may select $S=L$ so $K \cap S = \varnothing$.
Thus we forbid $c \geq \ell-k$.

We have checked all possible forbidden cones and the statement of the
theorem describes precisely those bundles which are left over.
\end{proof}

In order to build an exceptional collection, we will need to compute
Ext-groups.  Since we are only using line bundles, it suffices to show
that line bundles corresponding to differences of divisors have trivial
cohomology.
Thus, we need the following:

\begin{lem} \label{lem:difference}
If $L_1 \in F(c_1,0,\ell_1)$ and $L_2 \in F(c_2,0,\ell_2)$, then
$L_1-L_2 \in F(c_1-c_2,\ell_2-i,\ell_1-i)$ for an integer $i$ satisfying
\begin{itemize}
\item $i \ge 0$, 
\item $i \le \ell_1$,
\item $i \le \ell_2$, and
\item $i \ge \ell_1+\ell_2-n-1$.
\end{itemize}
\end{lem}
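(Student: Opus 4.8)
The plan is to reduce the statement to an elementary inclusion–exclusion count after choosing convenient representatives in each orbit. I would work in the basis $\{\Omega, E_0, \dots, E_n\}$ of $\Pic(V_n)$ from the proof of Lemma~\ref{lem:acyclic}, where $\Omega = \sum_{j=0}^{n} E_j - H$. By definition, $F(c,0,\ell)$ is the $S_{n+1}$-orbit of $c\,\Omega - \sum_{j\in J}E_j$ for a subset $J$ of size $\ell$, and $S_{n+1}$ acts transitively on the $\ell$-element subsets of $\{0,\dots,n\}$; so I would first write $L_1 = c_1\,\Omega - \sum_{j\in A}E_j$ and $L_2 = c_2\,\Omega - \sum_{j\in B}E_j$ with $A,B\subseteq\{0,\dots,n\}$, $|A|=\ell_1$, $|B|=\ell_2$.

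Next I would compute
\[
L_1 - L_2 = (c_1-c_2)\,\Omega - \sum_{j\in A}E_j + \sum_{j\in B}E_j,
\]
and read off its $E_j$-coefficients: they vanish on $A\cap B$ and on the complement of $A\cup B$, equal $-1$ on $A\setminus B$, and equal $+1$ on $B\setminus A$. Put $i := |A\cap B|$. Then $L_1-L_2$ has $|B\setminus A| = \ell_2 - i$ coefficients equal to $+1$ and $|A\setminus B| = \ell_1 - i$ coefficients equal to $-1$, supported on disjoint index sets; since $S_{n+1}$ acts transitively on ordered pairs of disjoint subsets of prescribed sizes, this exhibits $L_1-L_2$ as a class in the orbit $F(c_1-c_2,\,\ell_2-i,\,\ell_1-i)$. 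This is a legitimate orbit in the sense of the definition preceding Theorem~\ref{thm:dpvariety}, since $(\ell_2-i)+(\ell_1-i)=|A\setminus B|+|B\setminus A|\le |A\cup B|\le n+1$.

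It then remains to verify the four inequalities on $i$. The bound $i\ge 0$ holds since $i$ is a cardinality; $i\le\ell_1$ and $i\le\ell_2$ hold since $A\cap B$ is contained in both $A$ and $B$; and $i\ge\ell_1+\ell_2-n-1$ follows from $|A\cup B| = |A|+|B|-|A\cap B| = \ell_1+\ell_2-i$ together with $A\cup B\subseteq\{0,\dots,n\}$, which forces $|A\cup B|\le n+1$.

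I do not expect a genuine obstacle: the argument is pure lattice combinatorics in $\Pic(V_n)$, with no cohomology or forbidden cones needed, in contrast to Lemma~\ref{lem:acyclic}. The one point demanding care is the bookkeeping of conventions — matching the ``$+1$'' index set $B\setminus A$ with the parameter $k$ and the ``$-1$'' index set $A\setminus B$ with the parameter $\ell$ in $F(c,k,\ell)$, so that the difference lands in $F(c_1-c_2,\ell_2-i,\ell_1-i)$ and not in $F(c_1-c_2,\ell_1-i,\ell_2-i)$.
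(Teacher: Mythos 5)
Your argument is correct and is essentially the paper's own proof with cosmetic changes (using $A,B$ for $J_1,J_2$ and the basis element $\Omega$ rather than writing it out). The only genuine additions are the explicit check that $(\ell_2-i)+(\ell_1-i)\le n+1$ so the orbit is well defined and the remark on $S_{n+1}$-transitivity, both of which the paper leaves implicit.
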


\begin{proof}
The line bundle $L_1$ has the form
\[
c_1\left(\sum_{j=0}^{n}E_j - H\right) - \sum_{j \in J_1}E_j
\]
for some subset $J_1 \subseteq \{0,\ldots,n\}$ of size $\ell_1$.
Similarly, there is a subset $J_2$ of size $\ell_2$.
Their difference is given by
\[
(c_1-c_2)\left(\sum_{j=0}^{n}E_j - H\right)
+ \sum_{j \in J_2 \setminus J_1}E_j
- \sum_{j \in J_1 \setminus J_2}E_j \ .
\]
Note that if $i=|J_1 \cap J_2|$, then
$|J_2 \setminus J_1|=\ell_2-i$ and $|J_1 \setminus J_2|=\ell_1-i$.
The inequalities for $i$ in the statement of the lemma are 
obtained by noting that
$|J_1 \cap J_2|$,
$|J_1 \setminus J_2|$, and
$|J_2 \setminus J_1|$ must be non-negative
and that $|J_1 \cup J_2| \le n+1$.
\end{proof}

\begin{proof}[Proof of Theorem~\ref{thm:dpvariety}]
We show that for any two pairs $(c_1,\ell_1)$, $(c_2,\ell_2) \in F_n$ with $\ell_1 \ge \ell_2$, and for any $L_1 \in F(c_1,0,\ell_1)$
and $L_2 \in F(c_2,0,\ell_2)$, we have
$\Ext^i(\O(L_2),\O(L_1)) \cong H^i(X,\O(L_1-L_2))=0$ for all $i$  
unless $L_1=L_2$.
This will suffice to prove the theorem.
Indeed, taking $\ell_1=\ell_2$ and $c_1=c_2$ shows that
each $S_{n+1}$-orbit is internally orthogonal, taking $\ell_1=\ell_2$ and
$c_2=\ell_1-c_1$ establishes that the whole $(S_{n+1} \times C_2)$-orbit
is orthogonal, and the orbits ordered as in the statement of
the theorem thus form a strong exceptional collection.

Let $L'=L_1-L_2$.
By Lemma~\ref{lem:difference} we know $L' \in F(c_1-c_2,\ell_2-i,\ell_1-i)$
for some $i$.
We will consider 3 distinct cases. 
\begin{itemize}
\item Case 1: $\ell_1, \ell_2 \le \frac{n}{2}$.
For $j = 1, 2$, we have $\ell_j-\frac{n}{4} \le c_j \le \frac{n}{4}$. Adding the inequality for $j = 1$ with the negation of the inequality for $j = 2$, it follows that
\[
\ell_1-\frac{n}{2}
\le c_1 - c_2 \le
\frac{n}{2} - \ell_2 \ .
\]
Thus, for any non-negative $i$, we have
\[
-\frac{n}{2}+(\ell_1-i)
\le c_1 - c_2 \le
\frac{n}{2} - (\ell_2-i).
\]
We conclude that, regardless of $i$, the line bundle $L'$ has trivial
cohomology by Lemma~\ref{lem:acyclic}.

\item Case 2: $\ell_1, \ell_2 > \frac{n}{2}$.  For $j = 1, 2$ we have $\frac{n+2}{4} \le c_j \le \ell_j-\frac{n+2}{4}$. Adding $\frac{n+2}{4}$, we obtain $\frac{n}{2}+1 \le c_j+\frac{n+2}{4} \le \ell_j$.
Thus
\[
-\ell_2+\frac{n}{2}+1
\le c_1 - c_2 \le
\ell_1-\frac{n}{2}-1 \ .
\]
Since $\ell_j> \frac{n}{2}$, we have that $\ell_1+\ell_2 \ge n+1$.
We may assume $\ell_1+\ell_2-n-1 \le i$ from Lemma~\ref{lem:difference}.
Rearranging, we have $\ell_1 -i \le n+1-\ell_2 \leq \frac{n}{2}$.
We also obtain
\[
-\frac{n}{2}+(\ell_1-i)
\le c_1 - c_2 \le
\frac{n}{2}-(\ell_2-i)
\]
using $\ell_1+\ell_2-n-1 \le i$ and conclude that $L'$ has trivial
cohomology once again.

\item Case 3: $\ell_1 > \frac{n}{2}$ but $\ell_2 \le \frac{n}{2}$.
Now $\frac{n+2}{4} \le c_1 \le \ell_1 - \frac{n+2}{4}$
and  $\ell_2-\frac{n}{4} \le c_2 \le \frac{n}{4}$, so
\[
\frac{1}{2}
\le c_1 - c_2 \le
(\ell_1-\ell_2)-\frac{1}{2} \ .
\]
In fact, we have $1 \le c_1 - c_2 \le (\ell_1-\ell_2)-1$,
since $c_1,c_2,\ell_1,\ell_2$ are integers.
If $\ell_1-i > \frac{n}{2}$, then the conditions of Lemma~\ref{lem:acyclic}
are satisfied. 
Otherwise, $\ell_1-i -\frac{n}{2} \le 0$, and so using $\ell_1 \ge
\ell_2$ we have
\[
\ell_1-i -\frac{n}{2} \le c_1-c_2 \le \frac{n}{2} -(\ell_2-i)
\]
to again satisfy the conditions of Lemma~\ref{lem:acyclic}.
\end{itemize}
\end{proof}

\section{Generation via windows} \label{section:window_prelim}

It remains to prove that the collection $\oldF_n$ is full, i.e., that it generates the category $\mathsf{D^b}(V_n)$. To do so, we utilize a particular run of the Minimal Model Program (MMP) for $V_n$. The endpoint of this run is $\mathbb{P}^{n}$, and the birational map $\bbP^n \dashrightarrow V_n $ is described above (blow up the torus invariants points of $\bbP^{n}$ and then inductively flip all of the proper preimages of the torus-invariant linear subspaces of dimension $d < m$, where $n = 2m$). To understand how the derived category is affected under such modifications, it will be advantageous to present the process as a variation of GIT quotients of the spectrum of the Cox ring of the blow up of $\bbP^{n}$ using \cite{BFK, Ballard}. 

We begin by recalling the relevant pieces of the theory of windows and
associated semi-orthogonal decompositions and apply these tools to the
case of toric varieties given by GIT quotients. We provide an
application to the centrally symmetric toric Fano varieties described
above in Section~\ref{sec:app_dp}.

Let us recall some definitions and results of \cite{BFK, Ballard} in the
context of a toric action. We establish some notation and conventions to
be used throughout the remainder of the paper.
Simple flips, blow-ups/downs, and fiber space contractions can be described as moving between chambers in the GKZ fan of a projective toric variety. 

We let $W := \mathbb{A}^r = \operatorname{Spec} k[x_1,\ldots,x_r]$ be a vector space and let $G$ be a subtorus of $\mathbb{G}_m^r \subseteq \operatorname{GL}(W)$. We use $\lambda : \mathbb{G}_m \to G$ to denote a one-parameter subgroup (or cocharacter) of $G$ and $\chi : G \to \mathbb{G}_m$ to denote a character of $G$. The abelian group of characters of $G$ is denoted by $\widehat{G}$.

Recall that the semi-stable locus associated to the $G$-equivariant line bundle $\mathcal O(\chi)$ is 
\begin{displaymath}
 W^{\text{ss}}(\chi) = \{w \in W \mid \exists f \in H^0(W, \mathcal O(n\chi))^G \text{ with } n >0 \text{ and } f(w) \neq 0\}.
\end{displaymath}
Note that the unstable locus (i.e., the complement of $W^{\text{ss}}(\chi))$ is given by the following vanishing locus:
\begin{displaymath}
W^{\text{us}}(\chi) = Z( f \mid f \in H^0(W, \O(n \chi))^G, n >0).
\end{displaymath}
Since $W^{\operatorname{ss}}(\chi) = W^{\operatorname{ss}}(m \chi)$
for $m > 0$, we can naturally extend the definition of semi-stable loci to fractional characters $\widehat{G}_{\mathbb{Q}}$. 
We write 
\begin{displaymath}
 W \modmod {\chi} G : = [W^{\text{ss}}(\chi)/G],
\end{displaymath}
where the right-hand side is the usual quotient stack. If this stack is represented by a scheme, this definition agrees with Mumford's GIT quotient \cite[Prop. 2.1.7]{BFK}.
Let 

\begin{align*}
    C^G(W) = \left\{\chi \in \widehat{G} \mid \exists f \in k[x_1,\ldots,x_r] \text{ with } f \neq 0 \text{ and } \right.
    \\ \left. f(g \cdot  x) = \chi (g) ^n f(x) \text{ for some } n > 0 \right\}.
\end{align*}

The set $C^G(W)\otimes _\Z {\real} \subseteq \widehat{G}_{\real}$ admits a fan structure where the interiors of the cones are exactly the subsets of equal semi-stable locus, see e.g. \cite[Prop. 4.1.3]{BFK}.
Assuming that $W\modmod {\chi} G$ is a simplicial and semi-projective toric variety, $C^G(W)$ is the effective cone of $W\modmod {\chi} G$. In this situation it also coincides with the pseudo-effective cone \cite[Lemma 15.1.8]{CLS}.
We denote this fan by $\Sigma_{\text{GKZ}}$. This is called the \emph{GKZ} (or \emph{GIT} or \emph{secondary}) \emph{fan} associated to the action of $G$ on $W$. 

A \emph{chamber} in $\Sigma_{\text{GKZ}}$ is an interior of a maximal cone. A chamber is a \emph{boundary chamber} if its closure intersects the closure of the complement of $C^G(X)_{\real}$. The \emph{empty chamber} is the complement of $C^G(X)_{\real}$. A \emph{wall} is the relative interior of the intersection of the closures of two adjacent chambers.

The fan $\Sigma_{\text{GKZ}}$ parametrizes the birational models of the usual (scheme-theoretic) GIT quotients that arise via GIT quotients for characters in chambers. Our interest is how the derived category is affected by varying our linearization across walls in $\Sigma_{\text{GKZ}}$. 

Given a one-parameter subgroup $\lambda$, we obtain a linear function
\begin{align*}
 \widehat{\lambda} : \widehat{G} &\to \operatorname{End}(\mathbb{G}_m) \cong \Z \\
 \chi & \mapsto \chi \circ \lambda.
\end{align*}
Each wall in $\Sigma_{\text{GKZ}}$ is the interior of a full-dimensional cone inside the
hyperplane given by the vanishing of some $\widehat{\lambda}$. Denote the corresponding wall by $\Sigma^0_\lambda$. We let $\Sigma^{\pm}_\lambda$ be the
two adjacent chambers lying in the half-spaces $\pm
\widehat{\lambda}_{\real} > 0$, respectively.
Given $x_i$, let $\operatorname{wt}_\lambda(x_i)$ denote $\lambda(a)$
where $x_i$ has grading $a \in \widehat{G}$. Define
\begin{displaymath}
 \mu_\lambda := \widehat{\lambda} \left(\omega_{[W/G]}\right) = -\sum_{i=1}^r \operatorname{wt}_{\lambda} (x_i),
\end{displaymath}
where $\omega _{[W/G]}$ is the canonical sheaf of $[W/G]$.
Without loss of generality,
we will assume that $\mu_{\lambda} \leq 0$. 
Choosing characters $\chi^{\pm} \in \Sigma^{\pm}_\lambda$ and $\chi^0 \in \Sigma^0_\lambda$, set 
\begin{align*}
 X^+_{\lambda} & := W \modmod{\chi^+} G \\
 X^-_{\lambda} & := W \modmod{\chi^-} G \\
 X^0_{\lambda} & := W \modmod{\chi^0} G.
\end{align*}
Denote the following vanishing loci as
\begin{align*}
 W_\lambda^+ & := Z(x_i \mid \operatorname{wt}_{\lambda}(x_i) < 0) \\
 W_\lambda^- & := Z(x_i \mid \operatorname{wt}_{\lambda}(x_i) > 0) \\
 W_\lambda^0 & := Z(x_i \mid \operatorname{wt}_{\lambda}(x_i) \not = 0).
\end{align*}
These are, respectively, the \textit{contracting, repelling}, and
\textit{fixed} loci of the $\gm$-action on $W$ induced by $\lambda$.
Note that $W_\lambda^{\pm}$ is unstable for $\chi$ if $\pm \widehat{\lambda}(\chi) < 0$. 
Finally, set 
\begin{align*}
 Z_{\lambda}^+ & := \left[ \left(W^+_\lambda \cap W^\text{ss}(\chi^+) \right) / G \right] \\
 Z_{\lambda}^- & := \left[ \left(W^-_\lambda \cap W^\text{ss}(\chi^-) \right) / G \right] \\
 Z_{\lambda}^0 & := \left[ \left(W^0_\lambda \cap W^\text{ss}(\chi^0) \right) / G \right].
\end{align*}
These induce the following \emph{wall-crossing diagram}
\begin{center}
 \begin{tikzpicture}
  \node at (-1.5,2) (x-) {$X_{\lambda}^-$};
  \node at (1.5,2) (x+) {$X_{\lambda}^+$};
  \node at (-3,0.75) (z-) {$Z_{\lambda}^-$};
  \node at (3,0.75) (z+) {$Z_{\lambda}^+$};
  \node at (0,0.5) (x0) {$X_{\lambda}^0$};
  \node at (0,-1.5) (z0) {$Z_{\lambda}^0$};
  \draw[->] (z-) -- node[above left] {$i^-$} (x-);
  \draw[->] (z+) -- node[above right] {$i^+$} (x+);
  \draw[->] (x-) -- node[below left] {$j^-$} (x0);
  \draw[->] (x+) -- node[below right] {$j^+$} (x0);
  \draw[->] (z-) -- node[below left] {$\pi^-$} (z0);
  \draw[->] (z+) -- node[below right] {$\pi^+$} (z0);
  \draw[->] (z0) -- node[left] {$i^0$} (x0);
  \draw[dashed,<->] (x-) -- (x+);
 \end{tikzpicture}
\end{center}
The maps $j^{\pm}$ are induced by the inclusions $W^\text{ss}(\chi^\pm)
\subseteq W^\text{ss}(\chi^0)$ and $i^{\pm},i^0$ are induced by base
changing the inclusions of $W_{\lambda}^\pm,W^0_{\lambda} \subseteq W$.
The maps $\pi^{\pm}$ are obtained from the projections
$W^{\pm}_{\lambda} \to W^0_{\lambda}$. 

\begin{rem}
The squares in the wall-crossing diagram almost never commute.
Take, for example, 
$X = \mathbb{A}^1$ with its usual action by $\mathbb{G}_m$;
in general, $\lim_{x \to 0} x  = 0 \neq x$.
\end{rem}

Passing to the respective good moduli spaces \cite{Alper}, 
the wall-crossing diagram yields a flip, blow-up/down, or fiber space
contraction diagram of the usual projective toric varieties.
The stacks $Z_{\lambda}^\pm$ are then the exceptional loci.
See Theorem~15.3.13~of~\cite{CLS} in the case of a flip.

The vector space $W_\lambda^0$ carries a trivial $\mathbb{G}_m$-action.
Thus, any quasi-coherent sheaf $\mathcal E$ on $Z_{\lambda}^0$ decomposes as 
\begin{displaymath}
 \mathcal E = \bigoplus_{a \in \Z} \mathcal E_a 
\end{displaymath}
corresponding to the local splitting of the associated $\Z$-graded module into homogeneous summands. 

\begin{defn} 
 We let $\mathcal E_a$ be the {$a$-th $\lambda$-weight space} of $\mathcal E$. We set 
 \begin{displaymath}
  \operatorname{wt}_{\lambda}(\mathcal E) := \{ a \in \mathbb Z \mid \mathcal E_a \not = 0 \}.
 \end{displaymath}
Note that $\operatorname{wt}_{\lambda}(\mathcal O(\chi)) =
\{ \widehat{\lambda}(\chi) \} $; or by a slight abuse of notation,
$\operatorname{wt}_{\lambda}(\mathcal O(\chi)) = \widehat{\lambda}(\chi)$.
For any $I \subseteq \mathbb Z$, let $\mathsf C_{\lambda}(I)$ denote the full subcategory of $\mathsf{D^b}(Z_{\lambda}^0)$ consisting of objects $\mathcal E$ whose cohomology sheaves have weights contained in $I$:
\begin{displaymath}
 \operatorname{wt}_{\lambda}(\mathcal H^\ast\mathcal E) \subseteq I.
\end{displaymath}
We set $\mathsf C_{\lambda}(a) := \mathsf C_{\lambda}(\{a\})$.
 
 The $I$-\emph{window} associated to $\lambda$, denoted $\weezer_{\lambda,I},$ is the full subcategory of $\mathsf{D^b}(X_\lambda^0)$ consisting of objects $\mathcal E$ whose derived restriction $\left( i^0 \right)^\ast \mathcal E$ lies in $\mathsf C_{\lambda}(I)$. 
\end{defn}

\begin{lem} \label{lem:wall contributions}
Suppose that $\widehat{\lambda}$ is primitive: if $nv=\widehat{\lambda}$
for $v \in \widehat{G}$ and $n \in \Z_{>0}$ then $v=\widehat{\lambda}$.
 For any $a \in \mathbb{Z}$, there is an equivalence
 \begin{displaymath}
  \mathsf{D^b}(Z_{\lambda}^{0,\operatorname{rig}}) \cong \mathsf C_\lambda(a).
 \end{displaymath}
 Moreover, in this case, the rigidification of $Z_\lambda^0$ with respect to $\mathbb{G}_m$ is given by
 \begin{displaymath}
  Z_{\lambda}^{0,\operatorname{rig}} \cong \left[ \left(W^0_\lambda \cap W^{\operatorname{ss}}(\chi^0) \right) / (G/\lambda(\mathbb{G}_m)) \right].
 \end{displaymath}
\end{lem}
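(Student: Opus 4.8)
The plan is to reduce the statement to the standard Bialynicki-Birula / Kempf-Ness analysis of a $\gm$-action on the fixed locus, combined with the fact that a $\gm$-gerbe over a Deligne-Mumford stack contributes one weight space to its derived category. Concretely, $Z_\lambda^0$ is a quotient stack $[(W^0_\lambda \cap W^{\operatorname{ss}}(\chi^0))/G]$ on which the one-parameter subgroup $\lambda(\gm) \subseteq G$ acts trivially (since $W^0_\lambda$ is exactly the $\lambda$-fixed locus of $W$). So $\lambda(\gm)$ sits inside the generic stabilizer, and one can rigidify it away.

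First I would verify that the displayed formula for $Z_\lambda^{0,\operatorname{rig}}$ is correct. The rigidification of a quotient stack $[U/G]$ with respect to a central subgroup $N \subseteq G$ acting trivially on $U$ is $[U/(G/N)]$; here $N = \lambda(\gm)$, $U = W^0_\lambda \cap W^{\operatorname{ss}}(\chi^0)$, and the primitivity hypothesis on $\widehat{\lambda}$ is what guarantees that $\lambda : \gm \to G$ is a \emph{split} injection onto a direct factor of the torus $G$ — equivalently, that $G/\lambda(\gm)$ is again a torus and $G \cong \lambda(\gm) \times (G/\lambda(\gm))$ as algebraic groups. (If $\widehat\lambda$ were divisible, $\lambda$ could be a nonsplit inclusion and the rigidification would acquire extra stacky structure; this is precisely where primitivity is needed.) This splitting identifies $[U/G]$ with a trivial $\gm$-gerbe over $[U/(G/\lambda(\gm))] = Z_\lambda^{0,\operatorname{rig}}$.

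Next, for the equivalence $\mathsf{D^b}(Z_\lambda^{0,\operatorname{rig}}) \cong \mathsf{C}_\lambda(a)$: because the gerbe is trivial, $\operatorname{QCoh}(Z_\lambda^0)$ decomposes as a direct sum over $\Z$ of copies of $\operatorname{QCoh}(Z_\lambda^{0,\operatorname{rig}})$ indexed by the $\lambda$-weight, i.e.\ a sheaf $\mathcal E$ on $Z_\lambda^0$ is the same as a $\Z$-graded object $\bigoplus_a \mathcal E_a$ with each $\mathcal E_a \in \operatorname{QCoh}(Z_\lambda^{0,\operatorname{rig}})$, matching the weight-space decomposition already recorded in the text. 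Restricting to the single weight $a$ identifies $\mathsf{C}_\lambda(a)$, the full subcategory of $\mathsf{D^b}(Z_\lambda^0)$ of complexes with cohomology concentrated in $\lambda$-weight $a$, with $\mathsf{D^b}(Z_\lambda^{0,\operatorname{rig}})$; the functor is "take the weight-$a$ component" with quasi-inverse "place in weight $a$", and exactness plus the direct-sum decomposition make these mutually inverse on the level of derived categories. The choice of $a$ is irrelevant since twisting by the character of $\gm$ of weight $a - a'$ gives an equivalence $\mathsf{C}_\lambda(a) \cong \mathsf{C}_\lambda(a')$.

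The main obstacle I anticipate is the bookkeeping around the primitivity hypothesis: making precise that $\widehat\lambda$ primitive in $\widehat{G}$ forces $\lambda(\gm)$ to be a saturated (hence split-off) subtorus of $G$, so that the quotient $G/\lambda(\gm)$ is a torus and the gerbe $Z_\lambda^0 \to Z_\lambda^{0,\operatorname{rig}}$ is neutral rather than merely banded by $\gm$. Once that structural point is nailed down — essentially a statement about cocharacter/character lattices of tori, dualizing "$\widehat\lambda$ primitive" to "$\operatorname{coker}(\lambda_*) $ torsion-free" — the rest is the formal gerbe-decomposition argument. I would also take care that all stacks in sight are concentrated enough (the quotients are of a finite-type scheme by a torus) that $\mathsf{D^b}$ and the weight decomposition behave well, but this is routine given the running hypotheses that $W \modmod{\chi} G$ is simplicial and semi-projective.
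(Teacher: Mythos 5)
Your proposal is correct and follows essentially the same route as the paper: primitivity of $\widehat{\lambda}$ gives the splitting $G \cong \lambda(\gm) \times G/\lambda(\gm)$, the rigidification is $[U/(G/\lambda(\gm))]$ (the paper simply cites a reference for this where you supply the quotient-stack argument directly), and the equivalence is realized by the weight-$a$ inclusion/projection, which the paper phrases as tensoring with $\O_{\operatorname{Spec} k}(\pm a)$ and pushing down.
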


\begin{proof}
The second statement is \cite[Section 5.1.3]{cherry}.
Note that we can split
$G \cong G/\lambda(\mathbb{G}_m) \times \lambda(\mathbb{G}_m)$
since $\widehat{\lambda}$ is primitive.
Given the second statement, if we tensor an object $\mathcal E$ of $\mathsf{D^b}(Z_{\lambda}^{0,\text{rig}})$ with $\mathcal O_{\operatorname{Spec} k}(a)$ we get an object of $\mathsf C_\lambda(a)$. This is the inverse to tensoring with $\mathcal O(-a)$ and pushing down via $Z_\lambda^0 \to Z_{\lambda}^{0,\text{rig}}$. 
\end{proof}

We set
\begin{displaymath}
 t^{\pm}_{\lambda} :=
\widehat{\lambda}\left(\omega_{[W^{\mp}_{\lambda}/G] \mid [W/G]}\right)
= -\sum_{ \pm \operatorname{wt}_{\lambda} (x_i) > 0 } \operatorname{wt}_{\lambda} (x_i) 
\end{displaymath}
and 
\begin{align*}
 I_{d,\lambda}^{\pm} & := [d \pm t_{\lambda}^{\pm}, d-1]
\end{align*}
where $d$ is an arbitrary integer.
Note that $\mu_{\lambda} = t^+_{\lambda} + t^-_{\lambda}$.
Windows allow one to ``lift" derived categories, made precise by the following fundamental results.

\begin{prop}[Fundamental Theorem of Windows I, Cor. 2.23 \cite{Ballard}, see also \cite{HalpLeist}]\label{prop:FTW1}
 The functors 
 \begin{displaymath}
  \left( j^{\pm} \right)^\ast |_{\weezer_{\lambda,I^{\pm}_d}} : \weezer_{\lambda,I^{\pm}_{d,\lambda}}  \to \mathsf{D^b}(X_\lambda^\pm) 
 \end{displaymath}
 are equivalences. 
\end{prop}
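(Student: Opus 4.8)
The statement is the \emph{grade-restriction window} theorem of \cite{BFK,Ballard} (compare \cite{HalpLeist}), so the plan is to recover it from the Kempf--Ness (Hesselink--Kirwan) stratification of the unstable locus together with Lemma~\ref{lem:wall contributions}. First I would reduce to the $+$ case, since replacing $\lambda$ by $-\lambda$ interchanges $\Sigma^+_\lambda\leftrightarrow\Sigma^-_\lambda$, $W^+_\lambda\leftrightarrow W^-_\lambda$, and $t^+_\lambda\leftrightarrow t^-_\lambda$. Because $\widehat{\lambda}(\chi^+)>0$, the repelling locus $W^-_\lambda$ is $\chi^+$-unstable while remaining $\chi^0$-semistable, and --- $G$ being a torus, so that $\lambda$ spans the only destabilizing ray at the wall $\Sigma^0_\lambda$ --- the difference $W^{\operatorname{ss}}(\chi^0)\setminus W^{\operatorname{ss}}(\chi^+)$ is a single stratum $S^+$, namely the $G$-sweep of those points of $W^-_\lambda$ that flow into $W^0_\lambda\cap W^{\operatorname{ss}}(\chi^0)$. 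Thus $(j^+)^\ast\colon\mathsf{D^b}(X^0_\lambda)\to\mathsf{D^b}(X^+_\lambda)$ is a Verdier localization, and I would identify its kernel $\mathsf{K}$ as the thick subcategory of $\mathsf{D^b}(X^0_\lambda)$ of complexes set-theoretically supported on $S^+$.

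Next I would describe $\mathsf{K}$ explicitly. Near $S^+$, up to the $\lambda$-flow, the stack $X^0_\lambda$ is the total space of the normal bundle $N$ of $S^+$ over $Z_\lambda^0$, on whose fibres $\lambda$ acts with weights exactly the $\operatorname{wt}_\lambda(x_i)>0$, of total weight $-t^+_\lambda=|t^+_\lambda|$. Resolving $\O_{S^+}$ by the Koszul complex of $N^\vee$ and splitting into $\lambda$-weight spaces produces a semiorthogonal decomposition of $\mathsf{K}$ indexed by $a\in\Z$, each summand equivalent, via pushforward from $Z_\lambda^0$, to $\mathsf{C}_\lambda(a)$, which is $\mathsf{D^b}(Z_\lambda^{0,\operatorname{rig}})$ by Lemma~\ref{lem:wall contributions}. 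The $\lambda$-weights occurring in that Koszul resolution fill an interval of $|t^+_\lambda|$ consecutive integers --- precisely the number of integers in $I^+_{d,\lambda}=[d+t^+_\lambda,d-1]$.

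The crux --- and the step I expect to be the main obstacle --- is full faithfulness of $(j^+)^\ast$ on the window. For $F,G\in\weezer_{\lambda,I^+_{d,\lambda}}$, the local-cohomology triangle along $S^+$ identifies the cone of the restriction map
\[
\hom_{X^0_\lambda}(F,G)\longrightarrow\hom_{X^+_\lambda}\!\big((j^+)^\ast F,(j^+)^\ast G\big)
\]
(derived $\hom$) with the derived $\hom$ from $F$ into the part of $G$ supported on $S^+$. By the previous paragraph the latter is built from objects pushed forward from $Z_\lambda^0$ whose $\lambda$-weights, after the strictly positive shifts coming from $N^\vee$, are forced out of the range allowed by the weights of $(i^0)^\ast F$; since both $(i^0)^\ast F$ and $(i^0)^\ast G$ have $\lambda$-weights in $I^+_{d,\lambda}$ and this interval has length exactly $|t^+_\lambda|$, which is the $\lambda$-weight of $\det N$, the relevant $\hom$-groups on $Z_\lambda^0$ vanish, so the cone vanishes. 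Getting this cancellation to come out exactly --- matching the window length $|t^\pm_\lambda|$ to the $\lambda$-weight of $\det N$ and running the Koszul computation on $Z_\lambda^0$ --- is the delicate part, and is exactly the bookkeeping with $t^\pm_\lambda$ and $\mu_\lambda$ that the window formalism is designed to package.

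Finally, for essential surjectivity I would take any $F'\in\mathsf{D^b}(X^+_\lambda)$, lift it to some $\tilde{F}\in\mathsf{D^b}(X^0_\lambda)$ with $(j^+)^\ast\tilde{F}\cong F'$ (possible since $(j^+)^\ast$ is a localization of the coherent derived category --- e.g.\ restrict a bounded complex of $G$-equivariant sheaves), and then mutate $\tilde{F}$ through the summands $\mathsf{C}_\lambda(a)$ of $\mathsf{K}$ with $a\notin I^+_{d,\lambda}$. Finitely many such mutations do not change $(j^+)^\ast\tilde{F}$ and land the object in $\weezer_{\lambda,I^+_{d,\lambda}}$; because the $\mathsf{C}_\lambda(a)$ exhaust all weights and $I^+_{d,\lambda}$ has the correct length, this window lift is unique. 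Together with full faithfulness this yields the equivalence, and the $-$ case follows from the symmetry noted at the start. (All of this is \cite[Cor.\ 2.23]{Ballard}; cf.\ \cite{HalpLeist}.)
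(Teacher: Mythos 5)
The paper does not prove this statement at all: Proposition~\ref{prop:FTW1} is imported verbatim from the literature, as Corollary~2.23 of \cite{Ballard} (see also \cite{HalpLeist}), and no proof is given. There is therefore no ``paper's own proof'' to compare against.

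Your sketch is a reasonable high-level account of how the window theorems in those references are established, correctly identifying the main ingredients --- the Kempf--Ness/Hesselink--Kirwan stratification of the unstable locus, the Koszul resolution of $\O_{S^+}$ by the conormal bundle, the weight-spread comparison between the window length and $\widehat{\lambda}(\det N)$, and a lift-and-mutate argument for essential surjectivity. But as a \emph{proof} it has real gaps, several of which you acknowledge yourself as ``the main obstacle'' or ``bookkeeping'': (i) the assertion that $(j^+)^\ast\colon\mathsf{D^b}(X^0_\lambda)\to\mathsf{D^b}(X^+_\lambda)$ is a Verdier localization with kernel the complexes supported on $S^+$ is automatic for the unbounded quasi-coherent category but requires justification for bounded coherent complexes on an Artin stack, and this is one of the technically delicate points in \cite{Ballard} and \cite{HalpLeist}; (ii) the claim that the weights in the Koszul resolution ``fill an interval of $|t^+_\lambda|$ consecutive integers'' is not literally correct --- what matters is the total weight of $\det N^\vee$, not that intermediate Koszul terms occupy a consecutive range, and the actual cancellation argument needs the Beilinson-type resolution on $Z_\lambda^0$; and (iii) ``finitely many such mutations'' in the surjectivity step needs a boundedness argument ensuring that a bounded complex has only finitely many nonzero $\lambda$-weight truncations. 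None of this affects the paper, since the paper takes Proposition~\ref{prop:FTW1} as a black box; but your proposal should be read as a plausible roadmap through \cite{Ballard, HalpLeist}, not a self-contained proof.
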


\begin{defn}
Since $\weezer_{\lambda,I^\pm_{d,\lambda}}$ is a full subcategory of
$\mathsf{D^b}(X_{\lambda}^0)$, we may define fully faithful functors
 \begin{displaymath}
  Q_d^\pm: \mathsf{D^b}(X_{\lambda}^\pm) \to \mathsf{D^b}(X_{\lambda}^0) 
 \end{displaymath}
as the inverse to $\left(j^\pm\right)^\ast |_{\weezer_{\lambda,I^\pm_{d,\lambda}}}$
followed by inclusion. That is, we have a diagram

\begin{center}
\begin{tikzpicture}
  \node at (0,0) (w) {$\weezer_{\lambda, I^+} $};
  \node at (-1.5,1.5) (x+) {$\mathsf{D^b}(X^+_{\lambda})$};
  \node at (1.5,1.5) (x0) {$ \mathsf{D^b}(X_{\lambda}^0)$};

  \draw[->] (x+) -- node[above] {$Q^+_d$} (x0);
  \draw[->] (w) -- node[below left] {$(j^+)*$} (x+);
  \draw[->] (w) -- node[below right] {$i$} (x0);
 \end{tikzpicture}
 \end{center}
 We also define
 \begin{align*}
  \Phi_d & := \left( j^- \right)^\ast \circ Q^+_d \\
  \Psi_d & := \left( j^+ \right)^\ast \circ Q^-_d.
 \end{align*}
\end{defn}

\begin{rem}
 Note that $\left( j^\pm \right)^\ast \circ Q^\pm_d \cong \mathds{1}_{\mathsf{D^b}(X_{\lambda}^\pm)}$, so that $Q^{\pm}_d$ is a right inverse to $\left( j^\pm \right)^\ast$.
\end{rem}

The following describes the effect that passing through a wall has on the corresponding derived categories.

\begin{thm}[Fundamental Theorem of Windows II, Thm. 2.29 \cite{Ballard}] \label{thm:fundwin2}
 For any $d \in \mathbb Z$, there is a semi-orthogonal decomposition
 \begin{displaymath}
  \mathsf{D^b}(X_{\lambda}^+) = \left\langle \mathsf C_{\lambda}(d + t^+_\lambda), \mathsf C_{\lambda}(d + 1 + t^+_\lambda), \ldots, \mathsf C_{\lambda}(d - 1 - t^-_\lambda), \mathsf{D^b}(X_\lambda^-) \right\rangle
 \end{displaymath}
 where the explicit fully-faithful functors are given by
 \begin{displaymath}
  \Upsilon_a^+ := i^+_\ast \circ \left(\pi^+\right)^\ast : \mathsf C_{\lambda}( a) \to \mathsf{D^b}(X_\lambda^+)
 \end{displaymath}
 and 
 \begin{displaymath}
  \Phi_d : \mathsf{D^b}(X_\lambda^-) \to \mathsf{D^b}(X_\lambda^+). 
 \end{displaymath}
 Moreover,
 \begin{displaymath}
  \Psi_d : \mathsf{D^b}(X_\lambda^+) \to \mathsf{D^b}(X_\lambda^-) 
 \end{displaymath}
 is the right adjoint (projection) functor to $\Phi_d$. 
\end{thm}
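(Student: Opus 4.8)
The plan is to extract the semi-orthogonal decomposition from the two window equivalences of Proposition~\ref{prop:FTW1}, by comparing the windows $\weezer_{\lambda,I^+_{d,\lambda}}$ and $\weezer_{\lambda,I^-_{d,\lambda}}$ inside their common ambient category $\mathsf{D^b}(X^0_\lambda)$. The first step is purely combinatorial. By definition $I^+_{d,\lambda}=[d+t^+_\lambda,d-1]$ and $I^-_{d,\lambda}=[d-t^-_\lambda,d-1]$, and the standing hypothesis $\mu_\lambda=t^+_\lambda+t^-_\lambda\le 0$ forces $d+t^+_\lambda\le d-t^-_\lambda$; hence $I^-_{d,\lambda}\subseteq I^+_{d,\lambda}$, with complement precisely the interval $[d+t^+_\lambda,\,d-1-t^-_\lambda]$, which is the range of weights in the statement. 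Since $\mathsf C_\lambda(I')\subseteq\mathsf C_\lambda(I)$ whenever $I'\subseteq I$, this nesting of intervals lifts to a nesting of full subcategories $\weezer_{\lambda,I^-_{d,\lambda}}\subseteq\weezer_{\lambda,I^+_{d,\lambda}}$ of $\mathsf{D^b}(X^0_\lambda)$.

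The technical heart is to produce, inside the larger window, a semi-orthogonal decomposition
\[
\weezer_{\lambda,I^+_{d,\lambda}}=\big\langle\,\Theta_{d+t^+_\lambda},\ \ldots,\ \Theta_{d-1-t^-_\lambda},\ \weezer_{\lambda,I^-_{d,\lambda}}\,\big\rangle,
\]
where $\Theta_a$, the ``pure $\lambda$-weight $a$'' layer, is the essential image of $Q^+_d\circ\Upsilon^+_a$. I would establish this one weight at a time, proving $\weezer_{\lambda,[a,d-1]}=\langle\Theta_a,\weezer_{\lambda,[a+1,d-1]}\rangle$ and iterating for $a=d+t^+_\lambda,\,d+t^+_\lambda+1,\,\ldots,\,d-1-t^-_\lambda$. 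The single-step statement is the wall-crossing core of the windows/VGIT machinery (Halpern--Leistner \cite{HalpLeist}, Ballard \cite{Ballard}): an object $\mathcal E$ of $\weezer_{\lambda,[a,d-1]}$ restricts on $Z^0_\lambda$ to a complex with $\lambda$-weights in $[a,d-1]$, its weight-$a$ component lies in $\mathsf C_\lambda(a)$, and a canonical triangle peels this component off as an object of the layer $\Theta_a$ (pushed forward from the exceptional stack $Z^+_\lambda$ by $\Upsilon^+_a=i^+_\ast(\pi^+)^\ast$), leaving a residue in $\weezer_{\lambda,[a+1,d-1]}$. The semi-orthogonality making this a genuine semi-orthogonal decomposition comes from a weight count against the Koszul resolution of the Kempf--Ness stratum excised in passing from $\chi^0$ to $\chi^+$, whose conormal $\lambda$-weights are governed by $t^\pm_\lambda$ --- exactly the numerical inequalities packaged into the windows. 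Along the way one records that $\Upsilon^+_a$ is fully faithful (using that $\pi^+$ is a linear contraction and $i^+$ a regular closed immersion), so that $\Theta_a\cong\mathsf C_\lambda(a)$ --- and $\cong\mathsf{D^b}(Z^{0,\mathrm{rig}}_\lambda)$ when $\widehat{\lambda}$ is primitive, by Lemma~\ref{lem:wall contributions}.

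The final step transports this decomposition across the equivalences of Proposition~\ref{prop:FTW1}. Applying $(j^+)^\ast|_{\weezer_{\lambda,I^+_{d,\lambda}}}$, an equivalence onto $\mathsf{D^b}(X^+_\lambda)$, sends each $\Theta_a$ to the essential image of $\Upsilon^+_a$ (because $(j^+)^\ast\circ Q^+_d\cong\mathds{1}$) and sends $\weezer_{\lambda,I^-_{d,\lambda}}$ to the essential image of $(j^+)^\ast\circ Q^-_d\colon\mathsf{D^b}(X^-_\lambda)\to\mathsf{D^b}(X^+_\lambda)$, upon identifying $\weezer_{\lambda,I^-_{d,\lambda}}$ with $\mathsf{D^b}(X^-_\lambda)$ via the inverse $Q^-_d$ of $(j^-)^\ast|_{\weezer_{\lambda,I^-_{d,\lambda}}}$. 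This yields the asserted decomposition of $\mathsf{D^b}(X^+_\lambda)$ with its explicit generators, the rightmost component being $\mathsf{D^b}(X^-_\lambda)$ embedded by the functor $\Phi_d$ of the statement. For the closing assertion, the inclusion of the rightmost component of any semi-orthogonal decomposition has a right adjoint, namely the projection onto that component; and since the layer objects are supported in $X^0_\lambda$ on the closure of $Z^+_\lambda$, which lies in the closed complement $X^0_\lambda\setminus X^-_\lambda$ and is hence annihilated by $(j^-)^\ast$, this right adjoint is $(j^-)^\ast\circ Q^+_d$ composed with the relevant equivalences --- the functor $\Psi_d$ of the statement.

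The main obstacle is the middle step: the layer-by-layer decomposition of the window is where the real geometry enters --- the Kempf--Ness picture, the Koszul resolution of the excised stratum, and the weight bookkeeping that validates the chain of mutations --- whereas the interval combinatorics and the transport of structure are formal once Proposition~\ref{prop:FTW1} and the standard formalism of semi-orthogonal decompositions are in hand. A secondary nuisance is keeping the sign conventions for $t^\pm_\lambda$ and the precise directions of $\Phi_d$ and $\Psi_d$ consistent throughout.
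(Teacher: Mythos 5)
This theorem is not proved in the paper: it is imported verbatim from Ballard~\cite{Ballard} (Thm.~2.29 there), so there is no in-text argument to compare you against. That said, your reconstruction is a faithful sketch of the argument one finds in the windows/VGIT literature. Your interval bookkeeping is right: with $t^+_\lambda \le 0$, $t^-_\lambda \ge 0$, and $\mu_\lambda = t^+_\lambda + t^-_\lambda \le 0$, one indeed gets $I^-_{d,\lambda} \subseteq I^+_{d,\lambda}$ with set-theoretic complement $[d+t^+_\lambda,\, d-1-t^-_\lambda]$, which has $-\mu_\lambda$ elements and accounts for exactly the $\mathsf C_\lambda(a)$ appearing in the decomposition. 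Transporting a decomposition of $\weezer_{\lambda,I^+_{d,\lambda}}$ across the equivalence of Proposition~\ref{prop:FTW1} is also the right move, and your identification of the inclusion/projection pair --- $\Phi_d$ as $(j^+)^\ast \circ Q^-_d$ up to equivalence, $\Psi_d$ as $(j^-)^\ast \circ Q^+_d$ --- is the one consistent with the theorem's stated directions and with how $\Psi_d$ is used in Lemma~\ref{lem:through_the_wall} (you have in fact caught that the paper's displayed definitions of $\Phi_d$ and $\Psi_d$ are swapped relative to that usage).

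Two caveats about the sketch. First, as you yourself flag, the entire mathematical content lives in the ``single-step'' peeling statement $\weezer_{\lambda,[a,d-1]} = \langle \Theta_a,\ \weezer_{\lambda,[a+1,d-1]}\rangle$; invoking ``the wall-crossing core of the windows/VGIT machinery'' there names the result being proved rather than proving it, so the proposal is an architectural outline rather than a proof. Second, your parenthetical justification that $\Upsilon^+_a = i^+_\ast\circ(\pi^+)^\ast$ is fully faithful ``using that $\pi^+$ is a linear contraction and $i^+$ a regular closed immersion'' is too quick: $i^+_\ast$ is not fully faithful in general, and what makes it so on the image of $(\pi^+)^\ast\mathsf C_\lambda(a)$ is precisely a $\lambda$-weight computation against the Koszul resolution of $Z^+_\lambda$ in $X^+_\lambda$ (whose conormal weights are all strictly positive), the same bookkeeping that drives the semi-orthogonality. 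Folding that computation into the proof, rather than treating it as an afterthought, is what the actual argument in~\cite{Ballard} does.
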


\begin{ex} \label{ex:P^1}
It is instructive to consider the case where
$W =\Spec( k[x_1, x_2])$ and $G=\gm$ with $\widehat{G}=\Z$
such that $x_1,x_2$ have degree $1$.
Here we take $\widehat{\lambda} = \id$, $\chi^+=1$, $\chi^-=-1$
and $\chi^0=0$.
We find that $\omega_{[W/G]}=-2$, $\mu_\lambda=-2$,
$t_+=-2$ and $t_-=0$.
Furthermore, $X^+_\lambda = \bbP^1$,
$X^-_\lambda = \varnothing$, and
$X^0_\lambda = [W/G]$.
Lastly, $Z^+_\lambda = \bbP^1$, $Z^-_\lambda \cong \varnothing$
and $Z^0_\lambda \cong BG$.
The First Fundamental Theorem of Windows yields a semi-orthogonal decomposition
$\mathsf{D^b}(X_\lambda^+) =
\langle \O_{\bbP^1}(d-2), \O_{\bbP^1}(d-1) \rangle,$ and
$\mathsf{D^b}(X_\lambda^-) = 0$
for any $d$. This recovers the foundational result of Beilinson. 
Note that this is also compatible with the Second Fundamental Theorem of
Windows.
\end{ex}

The following lemma allows us to track the action of $\Psi_d$ for particular objects. 

\begin{lem} \label{lem:through_the_wall}
 If $E = \left( j^+ \right)^* F$ and $\operatorname{wt}_{\lambda}(F) \subseteq I^+_{d,\lambda}$, then $\Psi_d (E) = \left( j^- \right)^\ast F$. In particular, if $\widehat{\lambda}(\chi) \in I_{d,\lambda}^+$, then 
\begin{displaymath}
 \Psi_d \left( \mathcal O_{X_\lambda^+}(\chi) \right) = \mathcal O_{X_\lambda^-}(\chi).
\end{displaymath}
\end{lem}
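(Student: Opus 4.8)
The plan is to unwind the construction of $\Psi_d$; beyond Proposition~\ref{prop:FTW1} no new input is needed. Recall that $\Psi_d$ sends an object $E \in \mathsf{D^b}(X_\lambda^+)$ to $(j^-)^\ast Q_d^+(E)$, where $Q_d^+\colon \mathsf{D^b}(X_\lambda^+)\xrightarrow{\sim} \weezer_{\lambda,I^+_{d,\lambda}} \hookrightarrow \mathsf{D^b}(X_\lambda^0)$ is the inverse of the window equivalence $(j^+)^\ast|_{\weezer_{\lambda,I^+_{d,\lambda}}}$ furnished by Proposition~\ref{prop:FTW1}. Thus the whole statement reduces to the claim that, under the hypotheses, the window lift of $E$ is $F$ itself, that is $Q_d^+(E) \cong F$; applying $(j^-)^\ast$ then gives $\Psi_d(E) \cong (j^-)^\ast F$.

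To obtain $Q_d^+(E) \cong F$: the hypothesis $\operatorname{wt}_\lambda(F) \subseteq I^+_{d,\lambda}$ means, by definition of the window, that $\operatorname{wt}_\lambda(\mathcal H^\ast (i^0)^\ast F) \subseteq I^+_{d,\lambda}$, i.e.\ that $F$ already lies in $\weezer_{\lambda,I^+_{d,\lambda}}$. Since $Q_d^+$ is a two-sided inverse to the restriction functor $(j^+)^\ast|_{\weezer_{\lambda,I^+_{d,\lambda}}}$, we get $Q_d^+\bigl((j^+)^\ast F\bigr) \cong F$. This is the only place the window hypothesis is used: for a general $F \in \mathsf{D^b}(X_\lambda^0)$ the object $Q_d^+((j^+)^\ast F)$ is merely the window replacement of $F$, which coincides with $F$ precisely when $F$ is already in the window. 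Hence $\Psi_d(E) \cong (j^-)^\ast F$, proving the first assertion.

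For the ``in particular,'' I would take $F = \mathcal O_{X_\lambda^0}(\chi)$. Restricting the $G$-equivariant line bundle $\mathcal O(\chi)$ to each semistable locus shows $(j^\pm)^\ast \mathcal O_{X_\lambda^0}(\chi) = \mathcal O_{X_\lambda^\pm}(\chi)$, so $E = (j^+)^\ast F = \mathcal O_{X_\lambda^+}(\chi)$; and $(i^0)^\ast \mathcal O_{X_\lambda^0}(\chi) = \mathcal O_{Z_\lambda^0}(\chi)$ is pure of $\lambda$-weight $\widehat{\lambda}(\chi)$, so the window hypothesis for this $F$ is exactly $\widehat{\lambda}(\chi) \in I^+_{d,\lambda}$. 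The general case then yields $\Psi_d(\mathcal O_{X_\lambda^+}(\chi)) = (j^-)^\ast \mathcal O_{X_\lambda^0}(\chi) = \mathcal O_{X_\lambda^-}(\chi)$.

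I do not anticipate a genuine obstacle here: the lemma is a formal consequence of Proposition~\ref{prop:FTW1} together with the definition of $\Psi_d$. The only point worth care is bookkeeping of the functors $j^\pm$, $i^0$, and $Q_d^\pm$ in the wall-crossing diagram, and making explicit that $F$ lies in the window $\weezer_{\lambda,I^+_{d,\lambda}}$ so that $Q_d^+$ returns $F$ on the nose rather than a window truncation.
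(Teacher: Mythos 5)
Your argument coincides with the paper's: both unwind $\Psi_d = (j^-)^\ast \circ Q_d^+$, observe that the weight hypothesis means $F$ already lies in the window $\weezer_{\lambda,I^+_{d,\lambda}}$ so that $Q_d^+\bigl((j^+)^\ast F\bigr) = F$, and then apply $(j^-)^\ast$; the ``in particular'' case is handled by the same specialization $F = \mathcal O_{X_\lambda^0}(\chi)$. Your write-up is slightly more explicit about uniqueness of the window lift and the weight bookkeeping on $Z_\lambda^0$, but the approach is identical.
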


\begin{proof}
 Note that $Q_d^+$ sends any object $A$ of $\mathsf{D^b}(X_\lambda^+)$
to the unique object $B$ in $\mathsf{D^b}(X_\lambda^0)$ such that 
$\left(j^+\right)^\ast B = A$ and $\left(i^0 \right)^\ast B$ lies in
$\mathcal C_{\lambda, I^+_{d,\lambda}}$.
 Clearly, by assumption, $F$ satisfies both of these conditions for $E$. Hence 
 \begin{displaymath}
  \Psi_d(E) = \left( j^- \right) ^\ast Q_d^+ E = \left( j^- \right) ^\ast F.
 \end{displaymath}
\end{proof}

We provide a technical lemma before applying the above framework to generalized del Pezzo varieties.
Recall that an object $F$ \emph{generates} a triangulated category $\mathsf{T}$
if the smallest thick subcategory of $\mathsf{T}$ containing $F$ is
$\mathsf{T}$ itself.

\begin{lem}\label{lem:gen}
Let $\mathsf{T}$ be a triangulated category with a given semiorthogonal
decomposition $\mathsf{T} = \langle \mathsf{A}, \mathsf{B} \rangle,$ and
let $\Psi: \mathsf{T} \to \mathsf{B}$ be the right adjoint to the inclusion
$\mathsf{B}  \subseteq \mathsf{T}$.
Assume there is an object $F \in \mathsf{T}$ such that
\begin{enumerate}
\item $\mathsf{A}$ is contained in the subcategory generated by $F$, and
\item $\Psi(F)$ generates $\mathsf{B}$.
\end{enumerate}
Then $F$ generates $\mathsf{T}$.
\end{lem}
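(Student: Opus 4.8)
The plan is to prove this by exhibiting a filtration of an arbitrary object $T \in \mathsf{T}$ using the semiorthogonal decomposition triangle, and then showing both pieces lie in the thick subcategory $\langle F \rangle$ generated by $F$. Write $\langle F \rangle$ for the smallest thick subcategory containing $F$; our goal is $\langle F \rangle = \mathsf{T}$. Since $\mathsf{T} = \langle \mathsf{A}, \mathsf{B}\rangle$, every object $T$ sits in a distinguished triangle $B \to T \to A \to B[1]$ with $A \in \mathsf{A}$, $B \in \mathsf{B}$, where $B = \Psi(T)$ (the right adjoint projects onto $\mathsf{B}$) and $A$ is the cone, lying in $\mathsf{A}$ (the left-adjoint projection). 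Because $\langle F \rangle$ is thick — in particular closed under cones and shifts — it suffices to show $A \in \langle F \rangle$ and $B \in \langle F \rangle$ for every such $T$.

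First I would handle $\mathsf{A}$: by hypothesis (1), $\mathsf{A} \subseteq \langle F \rangle$ outright, so any $A$ arising as above is in $\langle F \rangle$. The work is in hypothesis (2). I want to show $\mathsf{B} \subseteq \langle F \rangle$. We know $\Psi(F)$ generates $\mathsf{B}$, i.e. the smallest thick subcategory of $\mathsf{B}$ containing $\Psi(F)$ is all of $\mathsf{B}$. So it would be enough to show $\Psi(F) \in \langle F \rangle$ and that $\langle F \rangle \cap \mathsf{B}$ is a thick subcategory of $\mathsf{B}$ — the latter is immediate since $\mathsf{B}$ is a triangulated subcategory and $\langle F\rangle$ is thick, so their intersection is thick in $\mathsf{B}$. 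For the former, apply the semiorthogonal triangle to the object $F$ itself: there is a triangle $\Psi(F) \to F \to A_F \to \Psi(F)[1]$ with $A_F \in \mathsf{A}$. Since $F \in \langle F\rangle$ trivially and $A_F \in \mathsf{A} \subseteq \langle F \rangle$ by (1), the rotated triangle $A_F[-1] \to \Psi(F) \to F$ exhibits $\Psi(F)$ as the cone of a morphism between two objects of $\langle F\rangle$, hence $\Psi(F) \in \langle F \rangle$. Therefore $\Psi(F) \in \langle F \rangle \cap \mathsf{B}$, and since that intersection is thick in $\mathsf{B}$ and contains a generator of $\mathsf{B}$, we get $\mathsf{B} \subseteq \langle F \rangle$.

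Combining the two: for arbitrary $T$, the triangle $B \to T \to A \to B[1]$ with $A \in \mathsf{A} \subseteq \langle F\rangle$ and $B \in \mathsf{B} \subseteq \langle F \rangle$ shows $T$ is the cone of a morphism in $\langle F\rangle$, hence $T \in \langle F\rangle$. Thus $\langle F \rangle = \mathsf{T}$, as desired.

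I do not expect a serious obstacle here; the one point requiring a little care is the precise form of the semiorthogonal decomposition triangle and the identification of its pieces with $\Psi(T)$ and the left-adjoint projection. Concretely I would phrase it as: for $T \in \mathsf{T}$, the counit of the adjunction $\mathsf{B} \hookrightarrow \mathsf{T} \xrightarrow{\Psi} \mathsf{B}$ gives a morphism $\Psi(T) \to T$ whose cone $A$ lies in $\mathsf{A} = {}^\perp\mathsf{B}$ by the defining property of a semiorthogonal decomposition; this is standard (e.g. Bondal--Kapranov) and I would just cite it. Everything else is formal manipulation with thick subcategories and distinguished triangles, so the proof should be short.
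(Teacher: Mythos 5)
Your proof is correct and follows essentially the same route as the paper: apply the semiorthogonal decomposition triangle to $F$ itself to see that $\Psi(F)$ lies in the thick subcategory $\langle F\rangle$, use hypothesis (2) to conclude $\mathsf{B} \subseteq \langle F\rangle$, combine with hypothesis (1) for $\mathsf{A}$, and finish by decomposing an arbitrary object. If anything, your write-up is a bit tighter than the paper's, which contains the stray assertion that ``$F_a$ generates $\mathsf{A}$'' — a claim that does not follow from the hypotheses and is not actually needed; what is needed (and what you correctly use) is that $F$ generates $\mathsf{A}$, which is precisely hypothesis (1).
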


\begin{proof}
For any object $C \in \mathsf{T}$, we have a distinguished triangle $C_b \to C \to C_a$, functorial in $C$, where $ C_a \in \mathsf{A}$ and $C_b \in \mathsf{B}$.  Given generators $A$ of $\mathsf{A}$ and $B$ of $\mathsf{B}$, the object $\iota_{\mathsf{A}} (A) \oplus \iota _{\mathsf{B}}(B)$ generates $\mathsf{T}$, where $\iota_{\mathsf{A}}$ and $\iota _{\mathsf{B}}$ are the inclusions of $\mathsf{A}$ and $\mathsf{B}$ in $\mathsf{T}$.  The object $F$ has associated triangle $F_b \to F \to F_a$.  Since $F$ generates $\mathsf{A}$, it generates $F_a$. We can thus generate $F_b$ from $F$ using the distinguished triangle above.  Furthermore $F_a$ generates $\mathsf{A}$.  Note that $F_b = \iota _{\mathsf{B}}(\Psi(F))$, so since $\Psi (F)$ generates $\mathsf{B}$, it follows that $F_b$ generates $\mathsf{B}$.  Since $F_a$ generates $\mathsf{A}$ and $F_b$ generates $\mathsf{B}$, it follows that $F$ generates $\mathsf{T}$. 
\end{proof}

\section{Application of windows to del Pezzo varieties} \label{sec:app_dp}

Recall that $Y_n$ denotes the blow-up of $\bbP^n$ at the $(n+1)$ torus-invariant points. It is a toric variety with torus $T = \gm^n$. The spectrum of the Cox ring of $Y_n$ is isomorphic to $\bbA^{2n+2}$. Choosing a basis for $\operatorname{Div}_T(Y_n)$ consisting of 
\begin{displaymath}
 \left\{\bar{e}_0 := \left(H - \sum _{i \neq 0}E_i\right), \ldots ,\bar{e}_{n} := \left(H- \sum _{i \neq n }E_i \right), e_0 := E_0, \ldots , e_{n}:= E_{n}\right\}
\end{displaymath}
where $E_i$ is the exceptional divisor for the $i$-th point and $H$ is the hyperplane class, the action of the Picard torus $G \cong \gm^{n+2}$ on $W := \bbA^{2n+2}$ gives a $\operatorname{Pic}(Y_n) \cong \Z^{n+2}$-grading to the polynomial ring $k[x_0,\ldots, x_{n}, y_0, \ldots, y_{n}]$
where we have correspondences
$x_i \leftrightarrow \bar{e}_i$ and $y_i \leftrightarrow e_i$. Since we have a $\operatorname{Pic}(Y_n)$-grading, we can and will identify characters of $G$ with elements of $\operatorname{Pic}(Y_n)$. The weight matrix in the basis $H,E_0,\ldots,E_n$ is given by the $(n+2) \times (2n + 2)$-matrix
\[
\gamma := \left(\begin{array}{rrrrrrrrr}
1 & 1 & \cdots & 1 & 0 & 0 & \cdots & 0 & 0\\

0 & -1 & \cdots & -1 & 1 & 0 & \cdots &  0 & 0 \\

-1 & 0 & \cdots & -1& 0 & 1 & \cdots & 0 & 0 \\

\vdots & \vdots & \ddots & \vdots & \vdots & \vdots & \ddots & \vdots & \vdots  \\

-1 & -1 & \cdots & -1 & 0 & 0 & \cdots & 1 & 0\\

-1 & -1 & \cdots & 0 & 0 & 0 & \cdots & 0 & 1 
\end{array}\right) .
\] 
For brevity, we set $E := \sum_{i=0}^n E_i$.

\begin{lem}
 There is an isomorphism
 \begin{displaymath}
  V_n \cong W \modmod{-K} G
 \end{displaymath}
 where $-K = (n+1)H - (n-1) E$ is the anticanonical divisor of $V_n$.
There is also an isomorphism 
 \begin{displaymath}
  Y_n \cong W \modmod{H-t E} G
 \end{displaymath}
 where $0 < t \ll 1$. 
\end{lem}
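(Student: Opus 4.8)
**The plan is to identify the relevant chambers of the GKZ fan $\Sigma_{\text{GKZ}}$ for the action of $G$ on $W = \mathbb{A}^{2n+2}$.** The character lattice $\widehat{G}$ is identified with $\operatorname{Pic}(Y_n) \cong \mathbb{Z}^{n+2}$, written in the basis $H, E_0, \dots, E_n$, and the columns of the weight matrix $\gamma$ give the $G$-weights of the $2n+2$ Cox variables: $\operatorname{wt}(x_i) = \bar{e}_i = H - \sum_{j \neq i} E_j$ and $\operatorname{wt}(y_i) = e_i = E_i$. First I would recall that $C^G(W) \otimes \mathbb{R}$ is the effective (equivalently, pseudo-effective) cone of any simplicial semi-projective model, and that a chamber (interior of a maximal cone) consists of characters with equal semi-stable locus; the GIT quotient there is the toric variety whose $T$-Cartier divisor / linearization data match the chosen $\chi$. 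So the two assertions amount to computing $W^{\text{ss}}(\chi)$ for $\chi = -K$ and for $\chi = H - tE$, $0 < t \ll 1$, and matching the resulting quotient stack with $V_n$ and $Y_n$ respectively.

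\textbf{Second, I would treat the $Y_n$ statement, which is the more routine of the two.} Since $Y_n$ is itself the blow-up of $\mathbb{P}^n$ at its torus-fixed points and $W = \operatorname{Spec}(\operatorname{Cox}(Y_n))$ by construction, it is a general fact (Cox's construction, see \cite[Ch.~5]{CLS}) that $Y_n = [W^{\text{ss}}/G]$ where the irrelevant ideal cuts out $W^{\text{us}}$; one only needs to check that the linearization $H - tE$ for $0 < t \ll 1$ lies in the chamber corresponding to the fan $\Delta_{Y_n}$. Concretely, the maximal cones of $\Delta_{Y_n}$ determine which monomials are allowed to vanish simultaneously, hence $W^{\text{us}}(H - tE)$; one verifies that $H - tE$ lies in the relative interior of the nef cone of $Y_n$ (which is a maximal GKZ chamber precisely because $Y_n$ is a small $\mathbb{Q}$-factorial modification of nothing — it is the "maximal resolution" end of this particular run of the MMP), and that as $t \to 0^+$ one stays inside that chamber while approaching the wall toward $\mathbb{P}^n$. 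This identifies $W \modmod{H - tE} G \cong Y_n$.

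\textbf{Third, for the $V_n$ statement I would compute $W^{\text{ss}}(-K)$ directly and compare to the fan $\Delta$ of $V_n$.} The anticanonical class of $Y_n$ is $-K_{Y_n} = (n+1)H - (n-1)E$, and since $V_n$ and $Y_n$ agree in codimension one they share $\operatorname{Pic}$ and anticanonical class, so $-K = (n+1)H - (n-1)E$ as stated. I would verify that $-K$ is ample on $V_n$ (it is Fano by hypothesis) and lies in the relative interior of the GKZ chamber whose normal fan is $\Delta$: the semi-stable locus for $-K$ is the complement of $Z(\text{monomials in the }-K\text{-irrelevant ideal})$, and the non-faces of $\Delta$ — recorded by the primitive collections $\{e_i, \bar{e}_i\}$ and $\{e_i\}_{i \in S}$, $\{\bar{e}_i\}_{i \in S}$ for $|S| \geq \tfrac{n}{2}+1$, which are already enumerated in the proof of Lemma~\ref{lem:acyclic} — are exactly the index sets along which the corresponding coordinates are permitted to vanish in $W^{\text{us}}(-K)$. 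Matching these gives $W^{\text{ss}}(-K)/G \cong V_n$ as a stack, and since $V_n$ is a (smooth, hence simplicial) variety this coincides with Mumford's GIT quotient by \cite[Prop.~2.1.7]{BFK}.

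\textbf{The main obstacle I anticipate is the bookkeeping in the $V_n$ case: pinning down precisely that $-K$ lands in the $\Delta$-chamber rather than a neighboring one, i.e.\ that the primitive-collection data of $\Delta$ really is the combinatorial shadow of $W^{\text{us}}(-K)$.} This is where one must be careful, because $V_n$ is obtained from $Y_n$ by a sequence of flips, so the $-K$-chamber and the $(H - tE)$-chamber are distinct maximal cones of $\Sigma_{\text{GKZ}}$ separated by several walls; one needs to know that $-K$ is ample on $V_n$ (equivalently, lies in the interior of $\operatorname{Nef}(V_n)$, a full-dimensional GKZ chamber) and not merely nef. Granting that $V_n$ is Fano — which is exactly the defining hypothesis of the centrally symmetric Fano classification of \cite{VosKly} — this is immediate, and the remaining verification that the $-K$-unstable locus is cut out by the non-faces of $\Delta$ is then a direct computation with the weight matrix $\gamma$, using the description of the maximal cones of $\Delta$ as $\{e_i\}_{i \in A} \cup \{\bar{e}_i\}_{i \in B}$ with $A, B$ disjoint of cardinality $\tfrac{n}{2}$.
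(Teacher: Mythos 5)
Your approach for the $Y_n$ statement matches the paper's essentially verbatim: both observe that $H - tE$ for $0 < t \ll 1$ is ample on $Y_n$ (lies in the interior of $\operatorname{Nef}(Y_n)$, which is a GKZ chamber) and that the Cox construction then gives the GIT presentation directly. The parenthetical about $Y_n$ being the ``maximal resolution end'' is a red herring — every birational model along this run has its nef cone as a full-dimensional GKZ chamber, and that is all you need.

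For the $V_n$ statement you take a genuinely different route from the paper. The paper argues on the polytope side: it writes down the dual polytope $P^\vee$ of the centrally-symmetric reflexive polytope from \cite{VosKly}, computes (via the $M \to \mathbb{Z}^{\Delta(1)} \to \operatorname{Pic}$ short exact sequence and the explicit matrix of $\delta$) the polytope of $\mathbb{A}^{2n+2} \modmod{-K} G$ using the recipe in \cite[\S14.2]{CLS}, and observes the two polytopes coincide. You argue on the fan/irrelevant-ideal side: match $W^{\operatorname{us}}(-K)$ with the union of coordinate subspaces indexed by the primitive collections of $\Delta$, using ampleness of $-K$ on the Fano $V_n$ to place $-K$ in the interior of the $\Delta$-chamber. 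Both are correct and are essentially dual viewpoints; the paper's polytope comparison is shorter in practice because the polytope data for both sides is already explicit in the cited literature, whereas your route requires a direct verification that the monomials in the $-K$-graded piece of the Cox ring cut out precisely the non-faces of $\Delta$ — doable with the weight matrix $\gamma$, but more bookkeeping. One thing to make explicit if you pursue your route: you need the general toric GIT fact that for a character in the \emph{interior} of a GKZ chamber the GIT quotient is the toric variety for the corresponding fan (the paper cites \cite[Prop.~2.1.7]{BFK} and \cite[\S14.2]{CLS} for this), and then the ampleness of $-K$ on $V_n$ closes the argument.
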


\begin{proof} 
We first treat the presentation of $V_n$ as a GIT quotient by comparing the description of the associated polytope given in \cite[p. 234]{VosKly} with that given in \cite[\S14.2]{CLS}. Note that the polytope for $-K$ is centrally-symmetric. In \cite{VosKly}, the authors use the polytope $P = \text{Conv}( \pm f_i \mid 0 \leq i \leq n)$ where
$$\begin{array}{rl}
f_0 &= (-1,-1,\cdots,-1)\\
f_1 &= (1,0,\cdots,0)\\
f_2 &= (0,1,\cdots,0)\\
&\vdots\\
f_n &= (0,0,\cdots,1)
\end{array}$$
in $N = \Z^n$. Let $P^{\vee} = (m \mid \pm m _i \geq -1 \text{ for all } 1 \leq i \leq n \text{ and } \pm \sum m_i \geq -1)$ be its dual polytope in $M = N^{\vee}$. 
Turning to the GIT presentation, we have the usual short exact sequence for a
smooth projective toric variety $X$ with fan $\Delta$:
\begin{displaymath}
 0 \to M \to \Z^{\Delta (1)} \to \text{Pic}(X_{\Delta}) \to 0.
\end{displaymath}
Let $T$ be the torus of the toric variety $X_{\Delta}$.
Note that $M = \widehat{T}$ and $\text{Pic}(X_{\Delta}) = \widehat{G}$, the character groups of $T$ and $G$. This sequence may be identified with 
\begin{displaymath}
 0 \to \Z ^{n} \cong \text{ker}(\gamma) \xrightarrow{\delta} \Z^{2n+2} \xrightarrow{\gamma} \Z^{n+2} \to 0
\end{displaymath}
where $\delta$ is the inclusion. The matrix defining $\gamma$ is the weight matrix given above. 
It is clear that the kernel of $\gamma$ is given by 
\[
\ker (\gamma) = \left\{(\alpha, \beta)\ \middle|\ \sum \alpha_i = 0 \text{ and } \beta _j - \sum _{i \neq j} \alpha _i = 0\right\},
\]
where $\alpha = (\alpha_0, ..., \alpha_{n})$ and $\beta = (\beta_0, ..., \beta_{n})$.
One choice of matrix exhibiting the composite
$\Z^n \cong \text{ker}(\gamma) \xrightarrow{\delta} \Z^{2n+2}$ is
the $(2n+2)\times n$-matrix
\[
\left(\begin{array}{rrrr}
1 & 0 & \cdots & 0\\
0 & 1& \cdots & 0\\
\vdots & \vdots & \cdots & \vdots \\
0 & 0 & \cdots & 1\\
-1 & -1 & \cdots & -1\\
-1 & 0 & \cdots & 0\\
0& -1& \cdots & 0\\
\vdots & \vdots & \cdots & \vdots \\
0 & 0 & \cdots & -1\\
1 & 1 & \cdots & 1 \\
\end{array}\right) =
\left(\begin{array}{rrrrr}
& & & & \\
& & \text{Id} & & \\
& & & & \\
-1 & -1& -1 &  \cdots & -1\\
& & & & \\
& & -\text{Id} & &  \\
& & & & \\
1 & 1 & 1 & \cdots & 1\\
\end{array}\right).
\]
We have $\delta (m) = (\langle m, \nu_1 \rangle, ..., \langle m, \nu_{2n+2}\rangle)$ for some $\nu_{1}, ..., \nu_{2n+2} \in M^{\vee} = N$.
Reading the rows, these are precisely the elements of $\Delta (1)$.

As shown in \cite[\S14.2]{CLS}, the polytope of $\mathbb{A}^{2n+2} \modmod{-K}  G$ is given by 
\[
\text{Conv}\left((\alpha_1, \ldots, \alpha_n)\ \middle|\
\alpha_i \geq -1,\ - \sum \alpha_i \geq -1,\
,\ -\alpha_i \geq -1,\ \sum \alpha_i \geq -1\right).
\]
This is precisely the polytope $P^{\vee}$ described above, and the claimed GIT description of $V_n$ is verified.

For the presentation of $Y_n$ as a GIT quotient, we note that $H-tE$ is ample as a line bundle on $Y_n$. Thus, taking the quotient of $W$ relative to $H-tE$ produces a variety on which $H-tE$ (viewed as a bundle on this quotient) is ample, so these descriptions yield isomorphic varieties.
\end{proof}

Knowing that $V_n$ and $Y_n$ occur as GIT quotients via linearizations in different chambers of the secondary fan, we analyze the wall-crossing behavior. Let us begin by identifying the walls. Consider the following cocharacters: for any subset $J \subseteq \{0,1,\ldots,n\}$, define 
\begin{align*}
 \widehat{\lambda}_J : \operatorname{Pic}(Y_n) & \to \Z \\
 H & \mapsto 1-|J| \\
 E_j & \mapsto \begin{cases} -1 & \text{if } j \in J \\ 0 & \text{if } j \notin J \end{cases}
\end{align*}
We denote the corresponding cocharacters by $\lambda_J : \gm \to G$.
Evaluating $\widehat{\lambda}_J$ on $F_{c, L} = c (E - H) - \sum _{i \in L} E_i$ yields
$$\widehat{\lambda} _J(F_{c,L}) = c(-|J| - (1-|J|))+ |L \cap J| = |L \cap J|-c.$$

\begin{lem} \label{lem:GKZwalls} 
The walls in the GIT fan are precisely those given by $\widehat{\lambda}_J = 0$
for $J \subseteq \{0,\ldots,n\}$.
\end{lem}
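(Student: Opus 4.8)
The plan is to combine the standard combinatorial description of $\Sigma_{\text{GKZ}}$ with an explicit study of the hyperplanes spanned by the weight vectors. Write $a_1,\dots,a_{2n+2}$ for the columns $\bar e_0,\dots,\bar e_n,e_0,\dots,e_n$ of $\gamma$, so that $C^G(W)_{\real}=\operatorname{Cone}(a_1,\dots,a_{2n+2})$, and recall that a point $w\in W$ is $\chi$-semistable exactly when $\chi\in\operatorname{Cone}(a_i\mid i\in I(w))$, where $I(w)$ records the non-vanishing coordinates of $w$. It follows that the walls of $\Sigma_{\text{GKZ}}$ are contained in the union of the facet hyperplanes of the cones $\operatorname{Cone}(a_i\mid i\in I)$, $I\subseteq\{1,\dots,2n+2\}$, and conversely that any such hyperplane which meets $\operatorname{int}C^G(W)_{\real}$ carries a wall, since crossing it at a generic point of the corresponding facet toggles the semistability of the (nonempty) stratum of points of fixed support $I$. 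So the task reduces to: (i) classify the hyperplanes of $\Pic(Y_n)_{\real}$ spanned by at least $n+1$ of the $a_i$, and (ii) decide which of these meet $\operatorname{int}C^G(W)_{\real}$.

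For (i) I would exploit the single family of relations $\bar e_i-e_i=H-E$ (independent of $i$, with $E=\sum_j E_j$): a linear functional $\psi$ on $\Pic(Y_n)_{\real}$ is determined by $s:=\psi(H-E)$ together with the values $\psi(e_i)$, and $e_i\in\ker\psi\iff\psi(e_i)=0$ while $\bar e_i\in\ker\psi\iff\psi(e_i)=-s$. If $s\ne0$ these conditions are mutually exclusive, so $\ker\psi$ can contain $n+1$ of the $2n+2$ weights only if it contains exactly one of $e_i,\bar e_i$ for each $i$; putting $J=\{i\mid\bar e_i\in\ker\psi\}$ one reads off $\psi(e_i)=-s\,\mathds{1}_J(i)$ and $\psi(H)=s(1-|J|)$, i.e.\ $\psi$ is a scalar multiple of the primitive functional $\widehat\lambda_J$, and conversely $\{\bar e_i\mid i\in J\}\cup\{e_j\mid j\notin J\}$ is a basis of $\ker\widehat\lambda_J$. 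If $s=0$ the weights in $\ker\psi$ come in pairs $\{e_i,\bar e_i\}$, and a short rank count forces $\ker\psi=\{v_H+v_{E_{i_0}}=0\}$ for a single $i_0$. None of these last hyperplanes is a $\ker\widehat\lambda_J$; but — just like $\ker\widehat\lambda_{\varnothing}=\{v_H=0\}$ and $\ker\widehat\lambda_{\{0,\dots,n\}}$ — their defining functionals are $\ge0$ on every $a_i$, so they are facet hyperplanes of $C^G(W)_{\real}$ itself and hence disjoint from its interior. Therefore the only hyperplanes that can support a wall are the $\ker\widehat\lambda_J$, and since $\widehat\lambda_J$ has value $-1$ on $e_j$ ($j\in J$) and $+1$ on $\bar e_i$ ($i\notin J$), exactly those with $\varnothing\ne J\ne\{0,\dots,n\}$ meet $\operatorname{int}C^G(W)_{\real}$.

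It then remains to check that each such $\widehat\lambda_J$ genuinely carries a wall, which is the crux. Take $I_0=\{x_0,\dots,x_n\}\cup\{y_j\mid j\notin J\}$: then $\widehat\lambda_J\ge0$ on all $a_i$ with $i\in I_0$, with equality exactly on $F:=\operatorname{Cone}(\bar e_i\mid i\in J;\ e_j\mid j\notin J)$, so $F$ is a facet of $\operatorname{Cone}(a_i\mid i\in I_0)$ and spans $\ker\widehat\lambda_J$. Since $\ker\widehat\lambda_J$ is \emph{not} among the facet hyperplanes of $C^G(W)_{\real}$ listed above, $F\not\subseteq\partial C^G(W)_{\real}$, so $\operatorname{relint}F$ meets $\operatorname{int}C^G(W)_{\real}$. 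Choosing $\chi^\pm$ close to such a point with $\pm\widehat\lambda_J(\chi^\pm)>0$, the point $w_0$ having $y_j=0$ for $j\in J$ and all other coordinates equal to $1$ (hence support $I_0$) is $\chi^+$-semistable — as $\chi^+$ may be taken inside $\operatorname{Cone}(a_i\mid i\in I_0)$ — while it lies in $W^{+}_{\lambda_J}=Z(y_j\mid j\in J)$ and so is $\chi^-$-unstable by the criterion, recorded before the statement, that $W^{\pm}_{\lambda}$ is unstable for $\chi$ when $\pm\widehat\lambda(\chi)<0$. Hence $W^{\sem}$ differs on the two sides of $\ker\widehat\lambda_J$, so this hyperplane carries a wall. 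Combining (i), (ii) and this last point yields the claim, the extreme cases $J=\varnothing$ and $J=\{0,\dots,n\}$ contributing not interior walls but only the two boundary facets $\operatorname{Cone}(e_0,\dots,e_n)$ and $\operatorname{Cone}(\bar e_0,\dots,\bar e_n)$ of $C^G(W)_{\real}$. The main obstacle is precisely step (ii): ensuring each candidate hyperplane actually separates two distinct semistable loci rather than lying inside a single chamber or on $\partial C^G(W)_{\real}$ — and the observation that $F$ spans $\ker\widehat\lambda_J$ while $\ker\widehat\lambda_J$ is not a face hyperplane of $C^G(W)_{\real}$ is what rules that out.
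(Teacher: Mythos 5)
Your proof is correct, and it takes a genuinely different route from the paper's. The paper simply invokes the characterization from \cite[\S15.3]{CLS} that walls of the secondary fan correspond to circuits in $\Delta(1)\subseteq N$, and then enumerates circuits: the sets $\{e_i\}_{i\in J}\cup\{\bar e_j\}_{j\notin J}$ (with relation $\sum_{i\in J^c}\bar e_i-\sum_{i\in J}e_i=0$, yielding $\widehat\lambda_J$) and the pairs $\{e_i,\bar e_i\}$ (yielding $\widehat\lambda_{[\infty,i]}$). You instead work dually in $\widehat G_{\real}$ from the definition of semistability for a torus action, classifying the hyperplanes spanned by $\ge n+1$ of the weight vectors $a_i$ and then verifying which of them actually separate distinct semistable loci. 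Your approach is more self-contained, and it buys you something the paper leaves implicit: the paper's proof exhibits the circuits $\{e_i,\bar e_i\}$ and the associated $\widehat\lambda_{[\infty,i]}$, but the lemma statement lists only the $\widehat\lambda_J$, with no explanation of why the $\widehat\lambda_{[\infty,i]}$ are absent. Your $s=0$ analysis shows cleanly that $\ker\widehat\lambda_{[\infty,i]}=\{v_H+v_{E_i}=0\}$ is a supporting (indeed facet) hyperplane of the effective cone $C^G(W)_{\real}$, so it carries no interior wall, which reconciles the discrepancy. Note, however, a small mismatch in the other direction: you exclude $J=\varnothing$ and $J=\{0,\dots,n\}$ as ``not interior walls but boundary facets,'' whereas the paper's definition of wall (given just before, via the GKZ fan including the empty chamber) counts these as walls, and the lemma statement includes them; indeed the paper later crosses the $J=\varnothing$ wall into the empty chamber to start the induction. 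One also should be a little careful in your opening sentence that a hyperplane meeting $\operatorname{int}C^G(W)_{\real}$ carries a wall ``since crossing it at a generic point of the corresponding facet toggles the semistability of the stratum of support $I$'': the facet itself, not merely the ambient hyperplane, must meet $\operatorname{int}C^G(W)_{\real}$ for there to be chambers on both sides — but your later argument with $I_0$ and $F$ supplies exactly this, so the gap is only in the summary, not in the proof.
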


\begin{proof}
Walls in the GIT fan correspond to circuits in the set $\Delta(1)$ of one-cones \cite[\S15.3, p. 751]{CLS}. Recall that a circuit in $\Delta(1)$ is given by a linearly dependent set of ray generators such that each proper subset is linearly independent \cite[p. 751]{CLS}. There are two ways to obtain such a collection.  The first is to choose one of $e_i$ or $\overline{e}_i$ for each $0 \leq i \leq n$ since $\overline{e}_i = -e_i$ in $N$. In other words, for any subset $J \subseteq \{0,\ldots,n\}$, we take $\overline{e}_i$ for $i \not \in J$ and $e_i$ for $i \in J$. Note that we have the following primitive relation amongst the elements of this circuit:
\begin{displaymath}
 \sum_{i \in J^c} \overline{e}_i - \sum_{i \in J} e_i = 0. 
\end{displaymath}
This is exactly the image of $\widehat{\lambda}_J$.  The other way to obtain a circuit is to take $\{e_i, \overline{e}_i\}$ for any $i \in \{0, ..., n\}$. This is the image of $\widehat{\lambda}_{[\infty, i]}$, which is defined via $H \mapsto 1$ and $E_j \mapsto \delta_{ij}$. 
\end{proof} 

\begin{cor} \label{cor:pn}
 The GIT quotient for the chamber with $\operatorname{wt}_{\lambda_J}(\chi) < 0$ for $|J| > 0$ and $\operatorname{wt}_{\lambda_{\varnothing}}(\chi) > 0$ is isomorphic to $\mathbb{P}^n$.
\end{cor}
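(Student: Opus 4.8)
The plan is to choose an explicit character in the chamber in question, compute its semistable locus by the Hilbert--Mumford criterion for a diagonal torus action, and then recognize the resulting GIT quotient directly as $\mathbb{P}^n$. Recall (immediately from the definition of $W^{\operatorname{ss}}$, since sections of $\mathcal{O}(m\chi)$ are spanned by monomials of weight $m\chi$) that a point $w$ of $W = \mathbb{A}^{2n+2}$ is semistable for $\chi$ precisely when $\chi$ lies in the cone generated by the weights of those coordinates of $w$ which are nonzero; equivalently, $w$ is unstable precisely when there is a cocharacter $\lambda$ with $\operatorname{wt}_{\lambda} \geq 0$ on every nonzero coordinate of $w$ and $\widehat{\lambda}(\chi) < 0$. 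Here the weight of $x_i$ is $\bar{e}_i = H - \sum_{j \neq i} E_j$ and that of $y_i$ is $e_i = E_i$.

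First I would take $\chi = H + E$. Since $\widehat{\lambda}_J(H) = 1 - |J|$ and $\widehat{\lambda}_J(E) = -|J|$, we obtain $\widehat{\lambda}_\varnothing(\chi) = 1 > 0$ and $\widehat{\lambda}_J(\chi) = 1 - 2|J| < 0$ for every nonempty $J \subseteq \{0,\ldots,n\}$. As $\widehat{\lambda}_J(\chi) = 1-2|J| \neq 0$ for all $J$ and $\widehat{\lambda}_{[\infty,i]}(\chi) = 2 \neq 0$, the character $\chi$ lies on no wall by Lemma~\ref{lem:GKZwalls}, hence in the interior of the chamber described in the statement; since the semistable locus is constant on chambers it suffices to identify $W \modmod{\chi} G$.

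Next I would prove that $W^{\operatorname{ss}}(\chi) = (\mathbb{A}^{n+1} \setminus \{0\})_{x} \times (\gm^{n+1})_{y}$. If $y_i = 0$ for some $i$, the cocharacter $\lambda_{\{i\}}$ satisfies $\operatorname{wt}_{\lambda_{\{i\}}}(x_j) \in \{0,1\}$ for all $j$ and $\operatorname{wt}_{\lambda_{\{i\}}}(y_j) = 0$ for $j \neq i$, so it is nonnegative on every surviving coordinate of $w$, while $\widehat{\lambda}_{\{i\}}(\chi) = -1 < 0$; hence $w$ is unstable. If all $x_i = 0$, the surviving weights lie in the span of $E_0,\ldots,E_n$, which does not contain $\chi$ (its $H$-coefficient is $1$); hence $w$ is again unstable. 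Conversely, if all $y_i \neq 0$ and $x_k \neq 0$ for some $k$, then among the surviving weights we find $E_0,\ldots,E_n$ together with $\bar{e}_k$; adding $\bar{e}_k$ to the $E_j$ with $j \neq k$ produces $H$, so $\chi = H + \sum_i E_i$ lies in the cone these weights generate, and $w$ is semistable.

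Finally I would compute the quotient. The subtorus $N \cong \gm^{n+1}$ of $G$ arising from the projection $\Pic(Y_n) \twoheadrightarrow \Pic(Y_n)/\Z H$ acts simply transitively on the factor $(\gm^{n+1})_y$ of $W^{\operatorname{ss}}(\chi)$, so rescaling $y$ to $(1,\ldots,1)$ identifies $W^{\operatorname{ss}}(\chi)/N$ with $\mathbb{A}^{n+1} \setminus \{0\}$; tracking the weight matrix $\gamma$ then shows that the residual torus $G/N \cong \gm$ acts on this $\mathbb{A}^{n+1} \setminus \{0\}$ diagonally with every coordinate of weight $1$, an evidently free action. Therefore $W \modmod{\chi} G = [W^{\operatorname{ss}}(\chi)/G] \cong (\mathbb{A}^{n+1} \setminus \{0\})/\gm \cong \mathbb{P}^n$. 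The one point requiring care is this last normalization --- verifying that after rescaling $y$ the residual $\gm$ acts with all $x$-weights equal to $1$ --- but this is a short computation with $\gamma$, and the rest is a routine application of the semistability criterion, so I expect no real obstacle.
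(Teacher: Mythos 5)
Your proof is correct and follows essentially the same strategy as the paper's: identify the semistable locus (all $y_i \neq 0$, not all $x_i = 0$), quotient by the subtorus acting simply transitively on the $y$-coordinates, and recognize the residual $\gm$-action on $\mathbb{A}^{n+1}\setminus\{0\}$ as giving $\mathbb{P}^n$. The paper's version first applies an involution of $G$ to simplify the weight matrix and is somewhat terser on both the semistability and quotient steps (it only checks $y_i = 0 \Rightarrow$ unstable and then "passes to the semistable locus" at the end), whereas you choose an explicit linearization $\chi = H + E$, verify both inclusions of $W^{\mathrm{ss}}(\chi) = (\mathbb{A}^{n+1}\setminus\{0\})_x \times (\gm^{n+1})_y$ directly, and compute the residual $\gm$-weights from the original weight matrix $\gamma$ without any change of coordinates — a marginally more self-contained presentation of the same computation.
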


\begin{proof}
 We find it useful to the use the involution of $G = \gm^{n+2}$ given by 
 \begin{displaymath}
  (\beta, \alpha_0, \ldots, \alpha_n) \mapsto (\beta^{-1}, \beta \alpha_0, \ldots, \beta \alpha_n) =: (\gamma, \delta_0,\ldots,\delta_n).
 \end{displaymath}
 After applying this involution, the weight matrix becomes 
 \begin{displaymath}
  \left(\begin{array}{rrrrrrrrrr}
-1 & -1 & \cdots & -1 & -1 & 0 & 0 & \cdots & 0 & 0\\

1 & 0 & \cdots & 0 & 0 & 1 & 0 & \cdots &  0 & 0 \\

0 & 1 & \cdots & 0 & 0 & 0 & 1 & \cdots & 0 & 0 \\

\vdots & \vdots & \ddots & \vdots & \vdots & \vdots & \ddots & \vdots & \vdots  \\

0 & 0 & \cdots & 1 & 0 & 0 & 0 & \cdots & 1 & 0\\

0 & 0 & \cdots & 0 & 1 & 0 & 0 & \cdots & 0 & 1 
\end{array}\right)
 \end{displaymath}
 Next we check that if $y_0 = 0$, then the point is unstable for $\chi$. It suffices to exhibit a one-parameter subgroup $\lambda$ of $G$ such that 
 \begin{displaymath}
  \lim_{\alpha \to 0} \alpha \cdot (x_0,x_1,\ldots,x_n,0, y_1,\ldots,y_n)
 \end{displaymath}
 exists and $\operatorname{wt}_{\lambda}(\chi) > 0$. We can use
$\lambda_{\{0\}}^{-1}$ which, by assumption, satisfies
$\operatorname{wt}_{\lambda^{-1}_{\{0\}}}(\chi) > 0$  on the chamber.
Appealing to $S_{n+1}$-symmetry, we see that to be semi-stable it is
necessary that $y_i \not = 0$ for all $i$. Consider the subgroup $H =
\{(1,\delta_0,\ldots,\delta_n)\} \subseteq G$. The $H$-invariant subring
of $k[x_0,\ldots,x_n,y_0^{\pm 1},\ldots,y_n^{\pm 1}]$ is $k[x_0y_0^{-1},
\ldots, x_ny_n^{-1}]$ and it carries a $\gm$-action from $\gamma$ satisfying $\operatorname{wt}_\lambda(x_iy_i^{-1})=-1$. Thus, 
 \begin{displaymath}
  \left[\left(\mathbb{A}^{n+1} \times \gm^{n+1}\right)/G\right] \cong \left[\mathbb{A}^{n+1}/\gm\right].
 \end{displaymath}
 Passing to the semi-stable locus gives $\mathbb{P}^n$. 
\end{proof}

The geometric manifestation of Lemma~\ref{lem:GKZwalls} is exactly the
description relating $\mathbb{P}^n$ and $V_n$ via flips and then a
blow-down to $\mathbb{P}^n$. When crossing a wall (in the $K$-positive
direction) corresponding to $\frac{(n+1)}{2} \geq |J|\geq 2$, we flip
the contracting locus of $\lambda_J$ for the repelling locus of
$\lambda_J$. When crossing a wall for $J$ with $|J|=1$, we blow down the
corresponding exceptional divisor. Finally, when $J = \varnothing$, we
contract $\mathbb{P}^n$ down to a point. Each of these
exceptional/flipping loci are quotients of linear subspaces of $W = \mathbb{A}^{2n+2}$.

\begin{figure}[ht]
     \begin{tikzpicture}[thick,scale=0.7, every node/.style={scale=0.8}]

        \draw[step=1,black!20] (-4.95, .05) grid (10.95,11.95);
        \draw[thick,->] (3,1) -- (9,1) node [above right] {$\widehat{\lambda}_{\varnothing}$};
        \draw[thick,->] (3,1) -- (3,11) node [right] {$\widehat{\lambda}_{[1]}$};
        \draw[thick,->] (3,1) -- (-3,8.5) node [above left] {$\widehat{\lambda}_{[m]}$};
        \draw[thick,->] (3,1) -- (-1, 10) node [above left] {$\widehat{\lambda}_{[m-1]}$};
        \draw[thick,->] (3,1) -- (0.5,10.5) node [above] {$\widehat{\lambda}_{[2]}$};
  	
	 \draw[thick,->] (3,1) -- (-4,7) node [above] {$\widehat{\lambda}_{[n]}$};
	 
	  \draw[thick,->] (3,1) -- (-4,5) node [above] {$\widehat{\lambda}_{[\infty]}$};
	
	\draw[] (0,9) node [above] {$\iddots$};
	
	\draw[] (-3,7) node [above] {$\iddots$};
	
	\draw[] (2,8) node [above] {$H-tE$};
	
	\draw[] (2,10) node [above] {$Y_n$};
	
	\draw[] (0,7) node [below left] {$-K$};
	
	\draw[] (-2,9) node [below right] {$V_n$};
	
	\draw [->,
line join=round,
decorate, decoration={
    zigzag,
    segment length=8,
    amplitude=.9,post=lineto,
    post length=2pt
}]  (-0.5,7) -- (-1.25,8.25);

	\draw [->,
line join=round,
decorate, decoration={
    zigzag,
    segment length=8,
    amplitude=.9,post=lineto,
    post length=2pt
}]  (2,8.5) -- (2,10);

\draw[] (5,3) node [below] {$H+ tE$};

\draw[] (7,5) node [above right] {$\mathbb{P}^n$};

	\draw [->,
line join=round,
decorate, decoration={
    zigzag,
    segment length=8,
    amplitude=.9,post=lineto,
    post length=2pt
}]  (5,3) -- (7,5);

\draw[] (-2,3) node [] {$\varnothing$};

\draw[] (6,0) node [] {$\varnothing$};

        \foreach \Point/\PointLabel in {(3,1)/  }
        \draw[fill=black] \Point circle (0.1) node[above right] {$\PointLabel$};

    \end{tikzpicture}
  \caption{Portion of $\Sigma_{\text{GKZ}}$ generated by $E$ and $H$, and walls relating $\mathbb{P}^n$, $Y_n$, and $V_n$ and their linearizations.}
\label{fig:flippingwalls}
\end{figure}

\begin{rem}
To get a sense of $\Sigma_{\text{GKZ}}$, consider the plane spanned by
$E$ and $H$. This is presented in Figure~\ref{fig:flippingwalls}. Note that the walls determined by $\widehat{\lambda}_J$ for equal $|J|$
coincide.
Here $\widehat{\lambda}_{[a]}$ denotes the intersection of $\widehat{\lambda}_J$ with
$\operatorname{span}\{E,H\}$ when $a=|J|$, and $\widehat{\lambda}_{[\infty]}$ denotes the intersection of $\widehat{\lambda}_{[\infty, i]}$ (as defined in the proof of Lemma~\ref{lem:GKZwalls}) with $\operatorname{span}\{E,H\}$.

Notice that the anti-canonical divisor $-K$ lies on the ray emanating from the origin and passing through $(-(n-1), n+1)$.  Taking absolute values of the slopes of $\widehat{\lambda}_{[m]}$, $\widehat{\lambda}_{[m-1]}$, and the line passing through $-K$, the inequalities
\[
\frac{m+1}{m} \leq \frac{n+1}{n-1} \textrm{ and }
\frac{n+1}{n-1} \leq \frac{m}{m-1}
\]
show that $-K$ lies above $\widehat{\lambda}_{[m]}$
and below $\widehat{\lambda}_{[m-1]}$.
\end{rem}

We need to identify the contracting and repelling loci associated to each $\lambda_J$. The following result verifies the above claim that these loci are linear subspaces of $W$.

\begin{lem}
 On $W$, the ideal of the contracting locus
$W_J^+ := W_{\lambda_J}^+$ is $\left(y_j \mid j \in J\right)$, and the ideal of the
repelling locus $W_J^- := W^-_{\lambda_J}$ is $\left(x_i \mid i \not \in J \right)$.
The ideal of the fixed locus $W_J^0 := W^0_{\lambda_J}$ is $\left(x_i,y_j \mid i \not \in J, j \in J\right)$.
\end{lem}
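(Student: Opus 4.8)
The plan is to compute the contracting, repelling, and fixed loci directly from the definitions
\[
W_{\lambda_J}^+ = Z(y_i \text{ or } x_i \mid \operatorname{wt}_{\lambda_J}(\cdot) < 0), \quad W_{\lambda_J}^- = Z(\cdot \mid \operatorname{wt}_{\lambda_J}(\cdot) > 0), \quad W_{\lambda_J}^0 = Z(\cdot \mid \operatorname{wt}_{\lambda_J}(\cdot) \ne 0),
\]
so the entire task reduces to tabulating $\operatorname{wt}_{\lambda_J}(x_i)$ and $\operatorname{wt}_{\lambda_J}(y_i)$ for each $i \in \{0,\ldots,n\}$. Recall the correspondences $x_i \leftrightarrow \bar e_i = H - \sum_{j \ne i} E_j$ and $y_i \leftrightarrow e_i = E_i$, together with the definition of $\widehat{\lambda}_J$ by $H \mapsto 1 - |J|$ and $E_j \mapsto -1$ if $j \in J$, else $0$.

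First I would compute $\operatorname{wt}_{\lambda_J}(y_i) = \widehat{\lambda}_J(E_i)$, which is $-1$ when $i \in J$ and $0$ when $i \notin J$. Hence the variables $y_i$ of strictly negative weight are exactly those with $i \in J$, so $W_J^+ = Z(y_i \mid i \in J)$ as claimed; and no $y_i$ has strictly positive weight, so the $y$-variables contribute nothing to $W_J^-$. Next I would compute $\operatorname{wt}_{\lambda_J}(x_i) = \widehat{\lambda}_J\!\left(H - \sum_{j \ne i} E_j\right) = (1 - |J|) + |J \cap (\{0,\ldots,n\} \setminus \{i\})|$, which equals $1 - |J| + |J|= 1$ if $i \notin J$ and $1 - |J| + (|J| - 1) = 0$ if $i \in J$. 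So the $x_i$ of strictly positive weight are exactly those with $i \notin J$, giving $W_J^- = Z(x_i \mid i \notin J)$; and no $x_i$ has strictly negative weight, confirming $W_J^+$ has no $x$-variable defining equations. Finally, the variables of nonzero weight are precisely $\{y_i \mid i \in J\} \cup \{x_i \mid i \notin J\}$, whence $W_J^0 = Z(x_i, y_j \mid i \notin J,\ j \in J)$.

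There is no serious obstacle here: the statement is purely a bookkeeping computation of the weights of the $2n+2$ coordinate variables under a single explicitly given one-parameter subgroup, followed by reading off the three vanishing loci from their definitions. The only point requiring a moment's care is the sign convention and the off-diagonal structure of $\bar e_i = H - \sum_{j \ne i} E_j$ (as opposed to $H - E$), which is exactly what produces the clean dichotomy $\operatorname{wt}_{\lambda_J}(x_i) \in \{0,1\}$; getting this index arithmetic right is the crux, and it is immediate once one writes out $\widehat{\lambda}_J(\bar e_i)$ explicitly.
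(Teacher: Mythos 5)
Your proof is correct and follows exactly the route the paper intends: the paper's proof simply declares the lemma ``obvious from the definitions,'' and what you have written is the explicit weight computation that verification requires. The weights you obtain ($\operatorname{wt}_{\lambda_J}(y_i) \in \{-1,0\}$ and $\operatorname{wt}_{\lambda_J}(x_i) \in \{0,1\}$) are consistent with the paper's later Lemma~\ref{lem:wall_weight}, confirming your bookkeeping.
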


\begin{proof}
 This is obvious from the definitions. 
\end{proof}

In light of Theorem~\ref{thm:fundwin2}, we also record $t^{\pm}$ and $\mu$ for each $J$. 

\begin{lem} \label{lem:wall_weight}
Let $t_J^{\pm} := t^{\pm}_{\lambda _J}$ and $\mu_J := \mu(\lambda _J)$ for $ J \subseteq \{0, ..., n\}$. We have the following: 
\begin{itemize}
\item $t^{+}_{J} =
\widehat{\lambda}\left(\omega_{[W^{-}_{J}/G] \mid [W/G]}\right)
= -|J^c|$, 
\item $t^{-}_{J} =
\widehat{\lambda}\left(\omega_{[W^{+}_{J}/G] \mid [W/G]}\right)
= |J| $. 
\end{itemize}
Hence,
$ \mu_J := t_J^+ + t_J^- = |J| - |J^c| = 2|J| - n - 1.$
 
\end{lem}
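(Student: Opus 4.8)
The plan is to reduce the statement to a short computation of one-parameter subgroup weights. First I would compute, for each Cox coordinate, its weight under $\lambda_J$. Recall the correspondences $x_i \leftrightarrow \bar e_i = H - \sum_{j \neq i} E_j$ and $y_i \leftrightarrow e_i = E_i$, so that $\operatorname{wt}_{\lambda_J}(y_i) = \widehat{\lambda}_J(E_i)$ and $\operatorname{wt}_{\lambda_J}(x_i) = \widehat{\lambda}_J(H) - \sum_{j \neq i} \widehat{\lambda}_J(E_j)$. Since $\widehat{\lambda}_J(H) = 1 - |J|$ and $\widehat{\lambda}_J(E_j) = -1$ for $j \in J$ and $0$ otherwise, the sum $\sum_{j \neq i}\widehat{\lambda}_J(E_j)$ equals $-|J|$ when $i \notin J$ and $-(|J| - 1)$ when $i \in J$. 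Hence $\operatorname{wt}_{\lambda_J}(x_i)$ equals $1$ for $i \notin J$ and $0$ for $i \in J$, while $\operatorname{wt}_{\lambda_J}(y_i)$ equals $-1$ for $i \in J$ and $0$ for $i \notin J$. (This also re-derives the ideals of $W_J^{\pm}$ and $W_J^0$ from the preceding lemma.)

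Next I would substitute into the formula $t^{\pm}_{\lambda} = -\sum_{\pm\operatorname{wt}_{\lambda}(x_i) > 0}\operatorname{wt}_{\lambda}(x_i)$ recorded earlier in this section. The only coordinates of strictly positive $\lambda_J$-weight are the $x_i$ with $i \notin J$, each of weight $1$, so $t^+_J = -|J^c|$; the only coordinates of strictly negative $\lambda_J$-weight are the $y_i$ with $i \in J$, each of weight $-1$, so $t^-_J = |J|$. The displayed equalities with $\widehat{\lambda}(\omega_{[W^{\mp}_J/G]\mid[W/G]})$ are just the general definition of $t^{\pm}$ specialized to $\lambda = \lambda_J$. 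Adding, $\mu_J = t^+_J + t^-_J = |J| - |J^c|$, which equals $2|J| - n - 1$ since $|J| + |J^c| = n+1$; as a cross-check one can instead evaluate $\mu_J = \widehat{\lambda}_J(\omega_{[W/G]})$ directly from $-K = (n+1)H - (n-1)E$ together with $\widehat{\lambda}_J(E) = -|J|$.

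There is no genuine obstacle here: the argument is a direct computation once the weights are in hand. The only points requiring care are the sign and index conventions — in the defining formula for $t^{\pm}$ the subspace $W^{\mp}_{\lambda}$ appears with the \emph{opposite} sign to the summation range, and the evaluation of $\widehat{\lambda}_J$ on $\bar e_i$ ranges over all $E_j$ with $j \neq i$ rather than over $j \in J$, which is what produces the case distinction above.
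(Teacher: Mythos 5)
Your proof is correct and follows the same route as the paper: compute the $\lambda_J$-weights of the Cox coordinates $x_i, y_i$ (equivalently the values $\widehat{\lambda}_J(\bar e_i)$ and $\widehat{\lambda}_J(e_i)$), then plug them into the defining formula $t^{\pm}_\lambda = -\sum_{\pm\operatorname{wt}_\lambda(x_i)>0}\operatorname{wt}_\lambda(x_i)$. The paper simply records the resulting weight table and leaves the substitution implicit; you spell out both the derivation of the weights and the summation, plus a consistency check via $\widehat{\lambda}_J(\omega_{[W/G]})$, but there is no mathematical difference.
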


\begin{proof}
 This follows from
 \begin{displaymath}
  \widehat{\lambda}_J(e_i) = \begin{cases} -1 & \text{if } j \in J \\ 0 & \text{if } j \notin J \end{cases}
 \end{displaymath}
 and 
 \begin{displaymath}
  \widehat{\lambda}_J(\bar{e}_i) = \begin{cases} 0 & \text{if } j \in J \\ 1 & \text{if } j \notin J .\end{cases}
 \end{displaymath}
\end{proof}

The following lemma is the key observation in proving that $\oldF_n$ generates $\mathsf{D^b}(V_n)$. Given a set $J \subseteq \{0,\ldots,n\}$, we denote the Koszul complex associated to the set $\{y_i \mid i \in J\}$ by $K(J)$. 

\begin{lem} \label{lemma:gen_koszul}
 Let $J \subseteq \{0,\ldots,n\}$ with $|J| \leq \frac n2$. Let $w$ be an integral point in the interval 
  \begin{displaymath}
   \left[ \frac{3n}{4} + |J| - n, \frac{3n+2}{4} - |J|\right] = \left[ \frac{3n+4}{4} - |J^c|, |J^c| - \frac{n+2}{4} \right].
  \end{displaymath}
  \begin{itemize}
    \item If $w \leq \frac n4$, the components of the tensor product $K(J) \otimes \mathcal O(w(E-H))$ lie in $\oldF_n$.
    \item If $w \geq \frac{(n+2)}{4}$, the components of the tensor product $K(J) \otimes \mathcal O(w(E-H) - \sum_{i \in J^c} E_i)$ lie in $\oldF_n$.
 \end{itemize}
\end{lem}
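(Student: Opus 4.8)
The plan is to expand the Koszul complex into its line-bundle summands and then verify, term by term, that each summand satisfies one of the two defining inequalities of $\oldF_n$ from Theorem~\ref{thm:Vn}; the two equivalent presentations of the interval in the statement are exactly adapted to the two bullets. First I would record the shape of $K(J)$: its degree-$p$ term is $\bigwedge^p\!\big(\bigoplus_{j\in J}\O(-E_j)\big)=\bigoplus_{S\subseteq J,\ |S|=p}\O\big(-\sum_{j\in S}E_j\big)$, so as a list of line bundles the components of $K(J)$ are precisely the $\O\big(-\sum_{j\in S}E_j\big)$ with $S$ ranging over the subsets of $J$. I would also rewrite the interval of the statement as $\big[\,|J|-\tfrac n4,\ |J^c|-\tfrac{n+2}{4}\,\big]$, so that its left endpoint is $|J|-\tfrac n4$ and its right endpoint is $|J^c|-\tfrac{n+2}{4}$; this is just the displayed identity of intervals, via $|J^c|=n+1-|J|$.

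For the first bullet ($w\le\tfrac n4$), tensoring by $\O(w(E-H))$ turns the components into $\O(F_{w,S})$ for $S\subseteq J$. Since $|S|\le|J|\le\tfrac n2$, condition (1) of Theorem~\ref{thm:Vn} requires $|S|-\tfrac n4\le w\le\tfrac n4$: the upper inequality is the hypothesis, and the lower one follows from $w\ge|J|-\tfrac n4\ge|S|-\tfrac n4$, the first step being that $w$ lies at or above the left endpoint of the interval. (In the degenerate case $S=\varnothing$ one only needs $-\tfrac n4\le w$, which again follows from $w\ge|J|-\tfrac n4\ge-\tfrac n4$.) Hence every component lies in $\oldF_n$.

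For the second bullet ($w\ge\tfrac{n+2}{4}$), tensoring by $\O(w(E-H)-\sum_{i\in J^c}E_i)$ turns the components into $\O(F_{w,\,J^c\sqcup S})$ for $S\subseteq J$, the union being disjoint because $S\subseteq J$. Here $|J^c\sqcup S|=|J^c|+|S|\ge|J^c|=n+1-|J|\ge\tfrac n2+1$, so condition (2) of Theorem~\ref{thm:Vn} applies and requires $\tfrac{n+2}{4}\le w\le|J^c\sqcup S|-\tfrac{n+2}{4}$: the lower inequality is the hypothesis, and the upper one follows from $w\le|J^c|-\tfrac{n+2}{4}\le|J^c\sqcup S|-\tfrac{n+2}{4}$, the first step being that $w$ lies at or below the right endpoint of the interval. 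So again every component lies in $\oldF_n$, and the lemma follows.

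I do not expect any substantive obstacle here beyond the bookkeeping: the one thing to notice is that the two presentations of the interval encode exactly the two inequalities that are needed, and that the Koszul differential only shrinks $S$ (bullet 1) or only enlarges the index set past $J^c$ (bullet 2), so the extremal index set is $S=J$, respectively $S=\varnothing$ — which is precisely the case that an endpoint of the interval is calibrated to handle. The only care needed is to keep the disjointness $J^c\cap J=\varnothing$ explicit when identifying $\O(w(E-H)-\sum_{i\in J^c}E_i-\sum_{j\in S}E_j)$ with $\O(F_{w,\,J^c\sqcup S})$, and to note $|J^c|\ge\tfrac n2+1$ so that one lands in case (2) rather than case (1).
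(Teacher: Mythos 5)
Your proof is correct and follows essentially the same route as the paper: expand $K(J)$ into line bundle summands $\O(-\sum_{j\in S}E_j)$ for $S\subseteq J$, tensor, and verify each resulting $\O(F_{w,S})$ (respectively $\O(F_{w,\,J^c\sqcup S})$) against condition (1) (respectively condition (2)) of Theorem~\ref{thm:Vn} by comparing $w$ against the appropriate endpoint of the interval. The only cosmetic difference is that the paper first applies an $S_{n+1}$-symmetry to normalize $J$, which is not actually needed for the argument and which you rightly omit.
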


\begin{proof}
 Using the action of $S_{n+1}$, we may assume that $J =
\{0,1,\ldots,|J|-1\}$. Each component of the Koszul complex $K(J)$ is
given by $\mathcal O(- \sum_{i \in L} E_i)$ for some $L \subseteq J$.
Assume $w \leq \frac n4$ and note that the tensor product $\mathcal O(-
\sum_{i \in L} E_i) \otimes \mathcal O(w(E-H))$ is precisely the line
bundle $\O(F_{w,L})$. Thus, it suffices to check that $(w,\ell) \in F_n$ whenever $0 \leq  \ell \leq |J|$ and 
 \begin{displaymath}
  \frac{3n}{4} + |J| - n \leq w \leq \frac{3n+2}{4} - |J|.
 \end{displaymath}
Simplifying the first inequality yields $|J| -\frac n4 \leq w$.  Using the assumption that $w \leq \frac n4$ together with the fact that $|L| \leq |J|$, we have
 \begin{displaymath}
  |L| - \frac{n}{4} \leq |J| - \frac{n}{4} \leq w \leq \frac n4.
 \end{displaymath}
 Thus, $\O(F_{w,L}) \in \oldF_n$ by definition. Assume that $w \geq \frac{n+2}{4}$ and note the tensor product $\mathcal O(- \sum_{i \in L} E_i) \otimes \mathcal O(w(E-H) - \sum_{i \in J^c} E_i)$ is the line bundle $F_{w,L \cup J^c}$. Thus, we need to check that $(w,\ell+|J^c|) \in F_n$ whenever $0 \leq \ell \leq |J|$ and 
 \begin{displaymath}
  \frac{3n}{4} + |J| - n \leq w \leq \frac{3n+2}{4} - |J|.
 \end{displaymath}
 Note that 
 \begin{displaymath}
  \frac{3n+2}{4} - |J| = |J^c| - \frac{n+2}{4}.
 \end{displaymath}
Using the assumption that $w \geq \frac{n+2}{4}$, we obtain 
 \begin{displaymath}
  \frac{n+2}{4} \leq w \leq |J^c| - \frac{n+2}{4} \leq \ell + |J^c| - \frac{n+2}{4}.
 \end{displaymath}
 So $F_{w,L\cup J^c} \in \oldF_n$ by definition.
\end{proof}

\begin{cor} \label{corollary:generation_up_top}
 Let $U \subseteq W$ be any $G$-stable open subset of $W$. The smallest
full triangulated subcategory of $\mathsf{D}^\mathsf{b}_{G}(U) \cong \mathsf{D^b}([U/G])$ containing the line bundles in $\oldF_n$ also contains, for each $J \subseteq \{0,\ldots,n\}$ with $|J| \leq \frac{n}{2}$ and each integer $w$ in the interval $\left[ \frac{3n+4}{4} - |J^c|, |J^c| - \frac{n+2}{4} \right]$, the objects $\mathcal O_{U \cap W^+_{\lambda_J}}(w(E-H))$ when $w \leq \frac n4$ and $\mathcal O_{U \cap W^+_{\lambda_J}}(w(E-H) - \sum_{i \in J^c} E_i)$ when $w \geq \frac{(n+2)}{4}$.
\end{cor}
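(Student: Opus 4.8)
The plan is to deduce this directly from Lemma~\ref{lemma:gen_koszul} by recognizing the twisted Koszul complexes appearing there as resolutions of (twists of) the structure sheaf of the contracting locus $W_J^+$. First I would recall that the ideal of $W_J^+ \subseteq W$ is generated by the coordinate functions $\{y_j \mid j \in J\}$, which form a regular sequence, so the Koszul complex $K(J)$ is a $G$-equivariant locally free resolution of $\mathcal{O}_{W_J^+}$, with terms the line bundles $\mathcal{O}_W(-\sum_{i \in L} E_i)$ for $L \subseteq J$. Tensoring this resolution by a line bundle $\mathcal{L}$ on $W$ stays a resolution, now of $\mathcal{O}_{W_J^+} \otimes \mathcal{L}$, and restricting along the open immersion $U \hookrightarrow W$ — an exact operation — produces a bounded complex of line bundles on $[U/G]$ quasi-isomorphic to $\mathcal{O}_{U \cap W_J^+} \otimes \mathcal{L}|_U$.

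Now fix $J$ with $|J| \le \tfrac n2$ and an integer $w$ in the stated interval. Since $n$ is even, $\tfrac{n+2}{4} - \tfrac n4 = \tfrac12$, so no integer lies strictly between $\tfrac n4$ and $\tfrac{n+2}{4}$; hence $w$ satisfies exactly one of $w \le \tfrac n4$ or $w \ge \tfrac{n+2}{4}$. Taking $\mathcal{L} = \mathcal{O}_W(w(E-H))$ in the first case and $\mathcal{L} = \mathcal{O}_W(w(E-H) - \sum_{i \in J^c} E_i)$ in the second, the previous paragraph identifies $K(J)|_U \otimes \mathcal{L}|_U$ with a resolution of $\mathcal{O}_{U \cap W_J^+}(w(E-H))$, resp.\ of $\mathcal{O}_{U \cap W_J^+}(w(E-H) - \sum_{i\in J^c}E_i)$. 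By Lemma~\ref{lemma:gen_koszul}, every term of $K(J) \otimes \mathcal{L}$ is one of the line bundles in $\oldF_n$ (of the form $\mathcal{O}(F_{w,L})$, resp.\ $\mathcal{O}(F_{w,L \cup J^c})$, for $L \subseteq J$), so every term of $K(J)|_U \otimes \mathcal{L}|_U$ is the restriction of such a bundle and therefore lies in the full triangulated subcategory $\mathsf{T} \subseteq \mathsf{D}^\mathsf{b}_{G}(U)$ generated by the line bundles in $\oldF_n$.

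Finally I would invoke the elementary fact that a bounded complex all of whose terms lie in a full triangulated subcategory is itself an object of that subcategory: an induction on the length, using the stupid (brutal) truncation triangles $\sigma_{>k}C^\bullet \to \sigma_{\ge k}C^\bullet \to C^k[-k]$, exhibits $C^\bullet$ as an iterated cone of shifts of its terms. Applied to $C^\bullet = K(J)|_U \otimes \mathcal{L}|_U$, this places $\mathcal{O}_{U \cap W_J^+}(w(E-H))$ (resp.\ the twisted version) in $\mathsf{T}$, which is the claim. I do not expect a genuine obstacle here: all the content sits in Lemma~\ref{lemma:gen_koszul}, and this corollary is its routine globalization. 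The only points needing care are the small numerical check that the two cases of the lemma together cover every integer in the interval, and the observation that, since $K(J)$, the twisting bundles, and $W_J^+$ are all built from $G$-eigenvectors, every construction above is automatically $G$-equivariant and hence takes place in $\mathsf{D}^\mathsf{b}_{G}(U) \cong \mathsf{D^b}([U/G])$.
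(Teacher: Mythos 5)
Your proof is correct and follows essentially the same route as the paper: use the Koszul complex $K(J)$ on the regular sequence $\{y_i\}_{i\in J}$ as a resolution of $\mathcal O_{W_J^+}$, tensor with the appropriate line bundle, invoke Lemma~\ref{lemma:gen_koszul} to place the terms in $\oldF_n$, and restrict to $U$ using exactness of open restriction. You merely spell out a few routine points the paper leaves implicit (the stupid-truncation argument for generation, the parity check ruling out an integer strictly between $n/4$ and $(n+2)/4$, and the automatic $G$-equivariance).
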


\begin{proof}
 The Koszul complex $K(J)$ is quasi-isomorphic to $\mathcal
O_{W^+_{\lambda_J}}$. Lemma~\ref{lemma:gen_koszul} shows that $K(J)
\otimes \mathcal O(w(E-H))$ is a complex consisting of objects of
$\oldF_n$ when $w \leq \frac n4$ and $K(J) \otimes \mathcal
O(w(E-H) - \sum_{i \in J^c} E_i)$ is a complex consisting of objects of
$\oldF_n$. Thus, we can generate $\mathcal
O_{W^+_{\lambda_J}}(w(E-H))$ for $w \leq \frac n4$ and $\mathcal
O_{W^+_{\lambda_J}}(w(E-H) - \sum_{i \in J^c} E_i)$ for $w \geq
\frac{n+2}{4}$ in $\mathsf{D}^\mathsf{b}_{G}(W) \cong \mathsf{D^b}([W/G])$.
Since restriction to $U$ is exact, the analogous statement on $U$ holds. 
\end{proof}

Let us set up some notation to handle the full run of the MMP.

\begin{notation}\label{notation:full_run}
For any set $S$, let $P(\ell, S) = \{ J \subseteq S \mid |J| = \ell\}$.
If $S = \{0, \ldots, m\}$, let $P(\ell, S)$ be denoted $P(\ell, m)$.  We
view $P(\ell, m)$ as an ordered set using the lexicographic order and
denote its elements by $J_1, \ldots, J_{\binom{m}{\ell}}$.
This induces a total ordering on the full power set where the minimal
element is $\varnothing$.
For each $J$, we let $ \Sigma _{J}^+$ and $ \Sigma_{J}^-$
be the following chambers in the GIT fan: 
$$\Sigma_{J}^+ = \{ \chi \in \Sigma_{\text{GKZ}} \mid \widehat{\lambda}
_{J'}(\chi) > 0 \text{ for } J' \leq J  \text{ and }  \widehat{\lambda}
_{J'}(\chi) <  0 \text{ for } J' > J \}$$ 
$$ \Sigma_{J}^- = \{ \chi \in  \Sigma_{\text{GKZ}} \mid
\widehat{\lambda}_{J^\prime}(\chi) > 0 \text{ for } J^\prime < J \text{
and } \widehat{\lambda}_{J^\prime}(\chi) < 0 \text{ for } J^\prime \geq J\}.$$ 
We let $X_{J}^\pm$ be a GIT quotient for a linearization in
$\Sigma_{J}^\pm$, and $X_{J}^0$ the GIT quotient corresponding to the
generic linearization in the wall for $J$ \cite[Def. 14.3.13]{CLS}. The
sequence of birational maps given by crossing walls according to this
ordering begins at $V_n$ and terminates at $\bbP^n$.
\end{notation}

\begin{rem}
As observed by the referee, the run of the MMP described above can be
broken up into more comprensible steps.
Let $Y_n^{(j)}$ denote the stack corresponding to all subsets
$S$ of cardinality $\le j$.
Thus $Y_n^{(0)}$ corresponds to $\bbP^n$,
$Y_n^{(1)}$ corresponds to $Y_n$,
$Y_n^{(2)}$ corresponds to blowing up the strict transforms of the 
lines passing through the blown-up points, \dots,
and $Y_n^{(n/2)}$ corresponds to $V_n$.
Our wall-crossings can be reorganized into $(S_{n+1} \times C_2)$-equivariant
birational maps
\[
V_n = Y_n^{(0)} \dasharrow Y_n^{(1)} \dasharrow \cdots \dasharrow
Y_n^{(n/2)} = V_n.
\]
It is an interesting question to ask whether approprate subsets of
$F_{c,J}$ are still $(S_{n+1} \times C_2)$-stable exceptional collections
for the intervening stacks.
\end{rem}

As described in Section~\ref{section:window_prelim}, passing through the wall corresponding to $J$
yields a diagram where we replace the subscripts $\lambda_J$ with $J$
for brevity:
\begin{center}
 \begin{tikzpicture}
  \node at (-1.5,2) (x-) {$X_{J}^-$};
  \node at (1.5,2) (x+) {$X_{J}^+$};
  \node at (-3,0.75) (z-) {$Z_{J}^-$};
  \node at (3,0.75) (z+) {$Z_{J}^+$};
  \node at (0,0.5) (x0) {$X_{J}^0$};
  \node at (0,-1.5) (z0) {$Z_{J}^0$};
  \draw[->] (z-) -- node[above left] {$i^-$} (x-);
  \draw[->] (z+) -- node[above right] {$i^+$} (x+);
  \draw[->] (x-) -- node[below left] {$j^-$} (x0);
  \draw[->] (x+) -- node[below right] {$j^+$} (x0);
  \draw[->] (z-) -- node[below left] {$\pi^-$} (z0);
  \draw[->] (z+) -- node[below right] {$\pi^+$} (z0);
  \draw[->] (z0) -- node[left] {$i^0$} (x0);
  \draw[dashed,<->] (x-) -- (x+);
 \end{tikzpicture}
\end{center}

\begin{lem} \label{lem:unstableloci}
 We have isomorphisms
 \begin{gather*}
Z_{J}^+ \cong \bbP^{|J^c|-1} \\
Z_{J}^- \cong \bbP^{|J|-1} \\
Z_{J}^{0,\operatorname{rig}} \cong \operatorname{Spec} k
 \end{gather*}
where we recall the convention that $\bbP^0 \cong \operatorname{Spec} k$
and $\bbP^{-1} \cong \varnothing$.
 Moreover, these induce isomorphisms of sheaves $\O(D)|_{Z_J^\pm} \cong
\O_{Z_{J}^\pm}(\pm \widehat{\lambda}_J(D))$ for any divisor $D$ on $X_J^{\pm}$.
\end{lem}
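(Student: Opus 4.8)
The plan is to work entirely inside the GIT presentation, using the explicit coordinates from the preceding lemma. Write $W_J^+ = \Spec k[x_0,\dots,x_n, y_j : j\in J^c]$; the table of $\lambda_J$-weights shows that the only coordinates of $W_J^+$ of nonzero $\lambda_J$-weight are the $x_i$ with $i\in J^c$, each of weight $1$. Hence there is a $G$-equivariant product decomposition $W_J^+ = W_J^0\times V^+$ with $V^+ := \Spec k[x_i : i\in J^c]$, on which $G$ acts through the characters $\bar e_i$ ($i\in J^c$), and $\lambda_J(\gm)$ scales $V^+$ while fixing $W_J^0$. Since $\widehat\lambda_J$ is primitive, $G\cong G'\times\lambda_J(\gm)$ with $G':=G/\lambda_J(\gm)$. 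The same setup applies to $W_J^- = W_J^0\times V^-$ with $V^- := \Spec k[y_j : j\in J]$ (of $\lambda_J$-weight $-1$), sharing the fixed factor $W_J^0$.

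First I would understand the fixed factor. A short unimodular change of basis — replace $H$ by $\bar e_{i_0}$ for a fixed $i_0\in J$, then $E_{i_0}$ by $-E_{i_0}$, then each $E_i$ with $i\in J\setminus\{i_0\}$ by $\bar e_i = \bar e_{i_0}+E_i-E_{i_0}$ — shows $\{\bar e_i : i\in J\}\cup\{e_j : j\in J^c\}$ is a $\Z$-basis of $\ker\widehat\lambda_J = \widehat{G'}$ (with the degenerate case $J=\varnothing$ handled separately), so $G'\curvearrowright W_J^0$ is, up to isomorphism, the diagonal action of $\gm^{n+1}$ on $\bbA^{n+1}$. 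The one-line computation $\widehat\lambda_{J'}(\bar e_i)=[\,i\notin J'\,]\ge 0$ and $\widehat\lambda_{J'}(e_j)=-[\,j\in J'\,]\le 0$ identifies the coordinate weights of $W$ lying on $\{\widehat\lambda_J=0\}$ as exactly the generators of the positive orthant $\tau\subset(\widehat{G'})_{\real}$, so $C^G(W)\cap\{\widehat\lambda_J=0\}=\tau$; as the wall $\Sigma^0_{\lambda_J}$ is a full-dimensional subcone and $\chi^0$ lies in its relative interior, $\chi^0\in\operatorname{int}\tau$. By the combinatorial criterion for semistability of a torus acting on affine space, $W_J^0\cap W^{\operatorname{ss}}(\chi^0)$ is then the big torus $\gm^{n+1}$, a single free $G'$-orbit, and Lemma~\ref{lem:wall contributions} yields $Z_J^{0,\operatorname{rig}}\cong\Spec k$.

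Next I would pin down $W_J^+\cap W^{\operatorname{ss}}(\chi^+)$, the claim being that it equals $\{(p,v) : \text{every coordinate of }p\text{ is nonzero and }v\ne 0\}$. That $v\ne 0$ is forced follows at once from $\widehat\lambda_J(\chi^+)>0$. The vanishing of a coordinate of $p$ is excluded by exhibiting a destabilizing one-parameter subgroup: for $x_{i_0}$ with $i_0\in J$ one uses $\lambda_{[\infty,i_0]}^{-1}$, whose only negative weights fall on $x_{i_0}$ and $y_{i_0}$ (both of which vanish on $W_J^+$); for $y_{j_0}$ with $j_0\in J^c$ one uses $\lambda_{J\cup\{j_0\}}$, and here the essential input is the explicit description of the chamber $\Sigma_J^+$ from Notation~\ref{notation:full_run}, which forces $\widehat\lambda_{J\cup\{j_0\}}(\chi^+)<0$ since $J\cup\{j_0\}>J$ in the chosen ordering. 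For the reverse inclusion, semistability of every such $(p,v)$ follows from $\chi^0\in\operatorname{int}\tau$ together with the freedom to take $\chi^+$ arbitrarily close to $\chi^0$ within the chamber. With this in hand the quotient unwinds: $G'$ acts freely transitively on the $\gm^{n+1}$-factor, leaving $Z_J^+\cong\bigl[(V^+\setminus 0)/\lambda_J(\gm)\bigr]\cong\bbP^{|J^c|-1}$, and the mirror argument gives $Z_J^-\cong\bbP^{|J|-1}$. The restriction statement is then formal: $\O_{X_J^\pm}(D)$ restricts along $i^\pm$ to the $\lambda_J(\gm)$-character $\widehat\lambda_J(D)$ on $\bigl[(V^\pm\setminus 0)/\lambda_J(\gm)\bigr]$, and since $\lambda_J(\gm)$ scales $V^+$ with weight $+1$ and $V^-$ with weight $-1$, this character corresponds to $\O(\widehat\lambda_J(D))$ on $Z_J^+$ and to $\O(-\widehat\lambda_J(D))$ on $Z_J^-$.

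The step I expect to give the most trouble is the exact determination of $W_J^+\cap W^{\operatorname{ss}}(\chi^+)$ — specifically, excluding the vanishing of the coordinates $y_{j_0}$ with $j_0\in J^c$. Here the naive separating-functional argument breaks down (dropping the ray $e_{j_0}$ need not lower the dimension of the relevant cone, as one already sees when $n=2$), so one is forced to use the ordering baked into the definition of $\Sigma_J^+$ rather than just the inequality $\widehat\lambda_J(\chi^+)>0$. The remaining ingredients — the change of basis, the location of $\chi^0$, and the final unwinding of the quotient stack and of $\operatorname{Pic}$ — are routine.
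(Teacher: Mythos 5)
Your proposal is correct and follows essentially the same route as the paper's proof: both determine $W_J^{\pm}\cap W^{\operatorname{ss}}$ by exhibiting destabilizing one-parameter subgroups drawn from the $\lambda_{J'}$, pass to the free quotient by the complementary $(n+1)$-torus acting transitively on the $\gm^{n+1}$-factor, and identify the residual $\lambda_J(\gm)$-action on the remaining affine factor with the standard one on $\mathbb{A}^{|J^c|}$ (resp.\ $\mathbb{A}^{|J|}$). The only notable variation is your choice of $\lambda_{[\infty,i_0]}^{-1}$ to destabilize points with $x_{i_0}=0$ ($i_0\in J$), where the paper instead uses $\lambda_J\lambda_{J\setminus\{j\}}^{-1}$; both work, the former relying on $\chi^+$ lying in the interior of a chamber inside the effective cone and the latter on the wall $\widehat\lambda_J=0$ bounding $\Sigma_J^+$.
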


\begin{proof}
We handle the $(+)$ claims. The statements for the $(-)$ side are proven completely analogously. 

On $W$, the ideal $(y_j \mid j \in J)$ defines the contracting locus $W_J^+$, so that functions on $W_J^+$ are given by
$k[x_0,\ldots,x_n] \otimes_k k[y_i \mid i \not \in J]$.
Assume that $y_i = 0$ for some point $p \in W^+_J$ and some $\ell \not \in J$.
Then $\lambda_{J \cup \{\ell \}}$ destabilizes $p$: $\widehat{\lambda}_{J \cup \{\ell \}}$ is negative on this chamber and $p$ lies in the contracting locus of $\lambda_{J \cup \{\ell\}}$, since $\lambda_{J \cup \{\ell\}}$ has positive weights on $k[x_0,\ldots,x_n] \otimes_k k[y_i \mid j \not \in J \cup \{\ell\}]$.

Assume that $x_j = 0$ for $j \in J$ for some point $p \in W^+_J$. Then $\lambda_J \lambda_{ J \setminus \{j\}}^{-1}$ destabilizes $p$. The weights $x_{\ell}$ for $\ell \not = j$ and $y_i$ for $i \not \in J$ are non-negative. The chamber $\Sigma_J^+$ lies in the positive half-spaces for both $\widehat{\lambda}_J$ and $\widehat{\lambda}_{J \setminus \{j\}}$. But $\widehat{\lambda}_J = 0$ intersects the closure of $\Sigma^+_J$. Thus, $\Sigma_J^+$ lies in the negative half-space associated to $\lambda_J \lambda_{ J \setminus \{j\}}^{-1}$. 

Additionally, $\lambda^{-1}_J$ destabilizes any point with all $x_i = 0$ for all $i \not \in J$. 

We have determined that we have a $G$-equivariant open immersion
\begin{displaymath}
 W^+_J \cap W^{\operatorname{ss}}(\chi^+) \subseteq \left( \mathbb{A}^{|J^c|} \setminus \{0\} \right) \times \mathbb{G}_m^{n+1}.
\end{displaymath}
The Hilbert-Mumford numerical criterion says that $W^{\operatorname{us}}(\chi^+)$ is the union of the contracting loci for one-parameter subgroups $\lambda$ with $\widehat{\lambda}(\chi^+) < 0$, i.e.,
\begin{displaymath}
 W^{\operatorname{us}}(\chi^+) = \bigcup_{\widehat{\lambda}(\chi^+) < 0} W^+_{\lambda}.
\end{displaymath}
We recall the subgroup $H$ from the proof of
Corollary~\ref{corollary:generation_up_top}.
Since the subgroup generated by $\delta_i$ for all $i$ acts by
multiplication on the torus factor, there is no contracting locus for
one-parameter subgroups.
Thus, we may pass directly to the invariant theory quotient by $H$
and then subsequently consider the GIT quotient by $G/H$. 

Taking the quotient by $H$ yields the subring $k[x_i y_i^{-1} \mid i \not \in J]$.  Note that $\lambda_J$ has weight $1$ on each $x_i y_i^{-1}$ for $i \not \in J$ and induces an isomorphism $G/H \cong \gm$. Thus, the above immersion is an equality and $Z_J^+ \cong \mathbb{P}^{|J^c|-1}$, where $\lambda_J$ induces the standard action on $\mathbb{A}^{|J^c|}$. 

Turning to the fixed locus, one can argue as above to conclude that 
\begin{displaymath}
 W^0_J \cap W^{\text{ss}}(\chi^0) \cong \gm^{n+1}
\end{displaymath}
with trivial $\lambda_J$-action. So we have $Z_J^0 \cong B\gm$.
Thus, $Z_J^{0,\operatorname{rig}} = \operatorname{Spec} k$.
\end{proof}

\begin{notation}
 Following the identification in Lemma~\ref{lem:unstableloci}, we will write $\mathcal O_{Z_J^+}(a)$ for the sheaf corresponding to $\mathcal O(a)$ on $\mathbb{P}^{|J^c|-1}$. 
\end{notation}

Let us now apply the framework introduced in Section~\ref{section:window_prelim} to our situation. For each $J \subseteq \{0,1,\ldots,n\}$ we can apply Theorem~\ref{thm:fundwin2} to the wall crossing at $\lambda_J$. 

\begin{prop} \label{prop:VnSOD}
 Let $J \subseteq \{0,1,\ldots,n\}$ with $|J| \leq \frac n2$. For any $d \in \Z$, there is a semi-orthogonal decomposition 
 \begin{displaymath}
  \mathsf{D^b}(X_{J}^+) = \left\langle \O_{Z_{J}^+}(d-|J^c|),\ldots,  \O_{Z_{J}^+}(d-1-|J|),\mathsf{D^b}(X_{J}^-)
\right\rangle,
 \end{displaymath}
 and if $\widehat{\lambda}_J(\chi) \in [d-|J^c|,d-1] = I_{d,J}^+$,
then $\Psi_d(\O_{X_{J}^+}(\chi)) = \O_{X_{J}^-}(\chi)$.
\end{prop}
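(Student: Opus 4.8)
The plan is to obtain both assertions as direct specializations of the Fundamental Theorem of Windows II (Theorem~\ref{thm:fundwin2}) and Lemma~\ref{lem:through_the_wall} applied to the one-parameter subgroup $\lambda = \lambda_J$, once the relevant numerical and geometric data are assembled. First I would record, via Lemma~\ref{lem:wall_weight}, that $t^+_{\lambda_J} = -|J^c|$ and $t^-_{\lambda_J} = |J|$, and check that the standing hypothesis $\mu_\lambda \le 0$ holds here: since $|J| \le n/2$ we have $\mu_{\lambda_J} = 2|J| - n - 1 \le -1 < 0$. Then Theorem~\ref{thm:fundwin2} applies to the wall-crossing at $\lambda_J$, and substituting $d + t^+_{\lambda_J} = d - |J^c|$ and $d - 1 - t^-_{\lambda_J} = d - 1 - |J|$ yields a semiorthogonal decomposition
\[
\mathsf{D^b}(X_J^+) = \left\langle \mathsf C_{\lambda_J}(d - |J^c|),\, \ldots,\, \mathsf C_{\lambda_J}(d - 1 - |J|),\, \mathsf{D^b}(X_J^-) \right\rangle,
\]
with the factors $\mathsf C_{\lambda_J}(a)$ embedded by $\Upsilon^+_a = i^+_\ast (\pi^+)^\ast$ and the last factor by $\Phi_d$.

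Next I would identify each building block $\mathsf C_{\lambda_J}(a)$ with a copy of $\mathsf{D^b}(\Spec k)$ and compute the image of its generator. The character $\widehat{\lambda}_J$ is primitive: its image contains $1$ (as the value on any $E_j$ with $j \in J$, or as the value on $H$ when $J = \varnothing$), so $\widehat{\lambda}_J$ is surjective onto $\Z$ and hence not a proper multiple of any element of the cocharacter lattice. Thus Lemma~\ref{lem:wall contributions} gives an equivalence $\mathsf C_{\lambda_J}(a) \cong \mathsf{D^b}(Z_J^{0,\operatorname{rig}})$, and Lemma~\ref{lem:unstableloci} identifies $Z_J^{0,\operatorname{rig}}$ with $\Spec k$; so $\mathsf C_{\lambda_J}(a)$ is generated by a single object. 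That object is the $\lambda_J$-weight-$a$ structure sheaf on $Z_J^0$, and tracing it through $\Upsilon^+_a$ — pulling back along $\pi^+ \colon Z_J^+ \cong \bbP^{|J^c|-1} \to Z_J^0$ and pushing forward along $i^+$, using the identification $\O(D)|_{Z_J^+} \cong \O_{Z_J^+}(\widehat{\lambda}_J(D))$ from Lemma~\ref{lem:unstableloci} together with the fact, established in its proof, that $\lambda_J$ acts with weight $1$ on the homogeneous coordinates of $Z_J^+$ — produces $i^+_\ast \O_{Z_J^+}(a)$. Substituting this into the decomposition above gives the displayed form, with each $\O_{Z_J^+}(a)$ understood as an object of $\mathsf{D^b}(X_J^+)$ via $i^+_\ast$.

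For the last assertion, note that $I^+_{d,J} = I^+_{d,\lambda_J} = [d + t^+_{\lambda_J}, d-1] = [d - |J^c|, d-1]$, so the hypothesis $\widehat{\lambda}_J(\chi) \in [d - |J^c|, d-1]$ is exactly the condition $\widehat{\lambda}_J(\chi) \in I^+_{d,\lambda_J}$ required by Lemma~\ref{lem:through_the_wall}. That lemma then applies verbatim and gives $\Psi_d(\O_{X_J^+}(\chi)) = \O_{X_J^-}(\chi)$, where $\Psi_d$ is precisely the right-adjoint projection onto $\mathsf{D^b}(X_J^-)$ appearing in Theorem~\ref{thm:fundwin2}.

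I expect the only non-formal point to be the normalization in the middle step: verifying that the generator of $\mathsf C_{\lambda_J}(a)$ maps under $\Upsilon^+_a$ to $\O_{Z_J^+}(a)$ rather than to a twist such as $\O_{Z_J^+}(-a)$ or $\O_{Z_J^+}(a) \otimes \omega_{\bbP^{|J^c|-1}}$. This amounts to matching the weight conventions built into $\Upsilon^+_a$ in Theorem~\ref{thm:fundwin2} against the explicit $\gm$-action on $Z_J^+$ computed in the proof of Lemma~\ref{lem:unstableloci}; once those conventions are pinned down, the remainder is substitution into results already in hand.
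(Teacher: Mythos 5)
Your proposal is correct and follows the same route as the paper's own (very terse) proof: both combine Theorem~\ref{thm:fundwin2} and Lemma~\ref{lem:through_the_wall} with the identifications furnished by Lemmas~\ref{lem:wall contributions} and~\ref{lem:unstableloci}. The paper states these moves without spelling them out; you have filled in the details, including the primitivity check needed to invoke Lemma~\ref{lem:wall contributions} and the use of the sign convention $\O(D)|_{Z_J^+} \cong \O_{Z_J^+}(\widehat{\lambda}_J(D))$ from Lemma~\ref{lem:unstableloci} to settle the normalization worry you raise at the end.
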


\begin{proof}
This follows from Theorem~\ref{thm:fundwin2} and Lemma~\ref{lem:through_the_wall} as soon as we identify $\mathsf C_{\lambda}(i)$ with $\langle \O_{\bbP^{|J^c|-1}}(i) \rangle $. But Lemmas~\ref{lem:wall contributions} and~\ref{lem:unstableloci} give this identification.
\end{proof}

We now turn to generating the wall contributions. 

\begin{lem} \label{lem:d0}
 The values of $\widehat{\lambda}_J$ on $\oldF_n$ lie in the interval 
 \begin{displaymath}
  \left[\left\lceil \frac{n+2}{4} \right\rceil - |J^c| , \left\lceil \frac{n+2}{4} \right\rceil - 1\right] = I_{\lceil (n+2)/4 \rceil,J}^+. 
 \end{displaymath}
\end{lem}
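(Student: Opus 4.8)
The plan is to reduce the statement to an elementary inequality about $\widehat{\lambda}_J(F_{c,L})$, which was already computed in the discussion preceding Lemma~\ref{lem:GKZwalls}: $\widehat{\lambda}_J(F_{c,L}) = |L\cap J| - c$. As in Proposition~\ref{prop:VnSOD} I take $|J| \le \tfrac n2$, and I write $N := \lceil \tfrac{n+2}{4}\rceil$, so that the asserted interval is $[N-|J^c|,\,N-1]$; this already equals $I^+_{N,J}$ by definition of $I^+_{d,\lambda}$ together with the fact that $t^+_J = -|J^c|$ (Lemma~\ref{lem:wall_weight}). The first step is to normalize the defining inequalities of $F_n$ using that $c$ is an integer: since $n$ is even, $N-1 = \lfloor \tfrac n4\rfloor$, so condition~(1) of Theorem~\ref{thm:dpvariety} becomes $|L|\le \tfrac n2$ with $|L|-N+1 \le c \le N-1$, and condition~(2) becomes $|L|\ge \tfrac n2+1$ with $N \le c \le |L|-N$. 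The second step is to record two numerical inequalities used throughout: $|J^c| = n+1-|J| \ge \tfrac n2+1 \ge 2\lfloor \tfrac n4\rfloor+1 = 2N-1$, and $2N \ge \tfrac{n+2}{2} = \tfrac n2+1$; together with the trivial combinatorial bounds $\max(0,\,|L|-|J^c|) \le |L\cap J| \le \min(|L|,|J|)$.

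With these preliminaries, I would simply check both endpoints in each of the two cases for $L$. If $(c,|L|)$ satisfies condition~(1): for the upper bound use $|L\cap J|\le |L|$ and $c\ge |L|-N+1$ to get $|L\cap J|-c \le N-1$; for the lower bound use $|L\cap J|\ge 0$ and $c\le N-1$ to get $|L\cap J|-c \ge 1-N$, which is $\ge N-|J^c|$ exactly because $|J^c|\ge 2N-1$. If $(c,|L|)$ satisfies condition~(2): for the upper bound use $|L\cap J|\le |J|\le \tfrac n2$ and $c\ge N$ to get $|L\cap J|-c\le \tfrac n2-N \le N-1$ (here using $\tfrac n2+1\le 2N$); for the lower bound use the inclusion--exclusion estimate $|L\cap J|\ge |L|-|J^c|$ together with $c\le |L|-N$ to get $|L\cap J|-c \ge (|L|-|J^c|)-(|L|-N) = N-|J^c|$. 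Since every line bundle in $\oldF_n$ has the form $\O(F_{c,L})$ with $(c,|L|)\in F_n$, this proves the lemma.

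The argument is pure bookkeeping, so there is no substantial obstacle; the only thing one must be attentive to is pairing, in each of the four endpoint checks, the correct one-sided bound on $c$ with the correct one-sided estimate on $|L\cap J|$ — in particular the inclusion--exclusion lower bound $|L\cap J|\ge |L|-|J^c|$ is needed precisely for the condition~(2) lower bound, and the estimate $|L\cap J|\le |J|$ (rather than $\le |L|$, which would be useless since $|L|$ can be as large as $n+1$) precisely for the condition~(2) upper bound. The hypothesis $|J|\le \tfrac n2$ enters only through $|J^c|\ge 2N-1$, but it is genuinely necessary: for $J=\{0,\dots,n\}$ the target interval $[N-|J^c|,N-1] = [N,N-1]$ is empty.
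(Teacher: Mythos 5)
Your proof is correct and follows essentially the same strategy as the paper: compute $\widehat{\lambda}_J(F_{c,L}) = |L\cap J|-c$, then bound it above and below in each of the two defining cases of $F_n$ by pairing the appropriate inequality on $c$ with a matching combinatorial estimate on $|L\cap J|$ (namely $0\le |L\cap J|\le |L|$ in case~(1), and $|L|-|J^c|\le |L\cap J|\le |J|$ in case~(2)). The only cosmetic difference is that you normalize the fractional bounds on $c$ to integer ones at the outset, whereas the paper works with the real interval $[\frac{n+2}{4}-|J^c|,\frac n4]$ and only intersects with $\Z$ at the end.
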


\begin{proof}
We first note that
\begin{displaymath}
 \frac{n+2}{4} - |J^c| = |J| - \frac{3n+2}{4}.
\end{displaymath}

Recall that for $c \in \Z$ and $L \subseteq \{0,\ldots,n\}$, we have
$F_{c, L} = c(E - H) - \sum _{j \in L} E_j$.  We use the fact that
$\widehat{\lambda}_J(F_{c, L}) = |L \cap J|-c$. We check the above claim using the defining equations of $\oldF_n$. 

Let $(c, L)$ satisfy $|L| - \frac n4 \leq c \leq \frac n4$ and let $|J| \leq \frac n2$.
Clearly, $|L|-c \geq |L \cap J|-c \geq -c$.
Using our defining equation, we have $\frac n4 \geq |L|-c$. Combining these inequalities yields
$\frac n4 \geq |L|-c \geq |L \cap J|-c$, giving the claimed upper bound
on $|L \cap J|-c$.

To get the lower bound, we use $|L \cap J|-c \geq -c $ and $c \leq \frac n4$,
so that $|L \cap J|-c \geq - \frac {n}{4}$.
But $\frac n4 < \frac n4 + \frac 12 = \frac{n+2}{4} = \frac{3n-2n+2}{4}
= \frac{3n+2}4 - \frac{2n} 4$. Since $|J| \leq \frac n2$, we have $$| L
\cap J|-c \geq - \frac n4 > |J| - \frac{3n+2}{4}.$$

We now show the claim holds when the second defining equation of $\oldF_{n}$ is satisfied. Assume $(c, L)$ satisfies $\frac{n+2}{4} \leq c \leq |L| - \frac{n+2}{4}$. We have 
\begin{align*}
 c - |L \cap J| & \geq c - |J| \\
& \geq \frac{n+2}{4} - |J| \\
& \geq \frac{n+2}{4} - \frac n2 \\
& \geq -\frac n4.
\end{align*}

For the claimed lower bound, we have 
\begin{align*}
 |L| - |L \cap J| & = |L \cap J^c| \\
 & \leq |J^c| \\
 & = n+1 - |J| \\
 & = \frac{3n+2}{4} + \frac{n+2}{4} - |J|.
\end{align*}
Subtracting $\frac{n+2}{4}$ from the first and last terms gives 
 \[
 c-|L \cap J| \leq |L| - \frac{n+2}{4} - |L \cap J| \leq \frac{3n+2}{4} - |J|.
 \]
 We have shown the weights lie in the set 
 \begin{displaymath}
   \left[ \frac{n+2}{4} - |J^c| ,  \frac{n}{4} \right] \cap \Z = 
  \left[\left \lceil \frac{n+2}{4} \right \rceil - |J^c| , \left \lfloor \frac{n}{4} \right \rfloor \right] \cap \Z.
 \end{displaymath}
 For even $n$, we have
$\left\lceil \frac{n+2}{4} \right\rceil - 1 = \left \lfloor \frac{n}{4}
\right \rfloor$.  Thus
 \begin{displaymath}
  \left[\left \lceil \frac{n+2}{4} \right \rceil - |J^c| , \left \lfloor \frac{n}{4} \right \rfloor \right] = \left[\left\lceil \frac{n+2}{4} \right\rceil - |J^c| , \left\lceil \frac{n+2}{4} \right\rceil - 1\right]. 
 \end{displaymath}
\end{proof}

\begin{lem} \label{lem:genpieces}
 The collection $\oldF_n$, viewed as line bundles on $X_{J}^+$, generates the sheaves
\begin{displaymath}
 \O_{Z_{J}^+}\left( \left\lceil \frac{n+2}{4} \right\rceil - |J^c| \right),\ldots,\O_{Z_{J}^+}\left( \left\lceil \frac{n+2}{4} \right\rceil -1- |J| \right).
\end{displaymath}
\end{lem}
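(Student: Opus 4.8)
The plan is to deduce Lemma~\ref{lem:genpieces} from Corollary~\ref{corollary:generation_up_top} by specializing the open set $U$ to the semistable locus producing $X_J^+$, and then transporting the resulting objects through the identifications of Lemma~\ref{lem:unstableloci}. Concretely, I would take $U = W^{\operatorname{ss}}(\chi^+)$ for a character $\chi^+$ in the chamber $\Sigma_J^+$ adjacent to the wall $\widehat{\lambda}_J = 0$, so that $[U/G] = X_J^+$ and the $G$-stable closed subscheme $U \cap W^+_{\lambda_J}$ descends to the exceptional locus $i^+ \colon Z_J^+ \hookrightarrow X_J^+$. Corollary~\ref{corollary:generation_up_top} then says that the line bundles of $\oldF_n$, regarded on $X_J^+$, generate $\O_{U \cap W^+_{\lambda_J}}(w(E-H))$ when $w \le n/4$ and $\O_{U \cap W^+_{\lambda_J}}(w(E-H) - \sum_{i \in J^c} E_i)$ when $w \ge (n+2)/4$, for every integer $w$ in the interval $[\tfrac{3n+4}{4} - |J^c|,\ |J^c| - \tfrac{n+2}{4}]$.

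The next step is to unwind these objects. By the projection formula we have $\O_{U \cap W^+_{\lambda_J}}(D) \cong i^+_*\bigl(\O(D)|_{Z_J^+}\bigr)$, and Lemma~\ref{lem:unstableloci} identifies $\O(D)|_{Z_J^+}$ with $\O_{Z_J^+}(\widehat{\lambda}_J(D))$. Since $\widehat{\lambda}_J(E-H) = -1$ and $\widehat{\lambda}_J(E_i) = 0$ for $i \in J^c$, both families of generators collapse to the single object $i^+_*\O_{Z_J^+}(-w)$. Moreover, because $n$ is even there is no integer strictly between $n/4$ and $(n+2)/4$, so the two cases ``$w \le n/4$'' and ``$w \ge (n+2)/4$'' of Corollary~\ref{corollary:generation_up_top} together account for every integer $w$ in the interval above. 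Hence $\oldF_n$, viewed on $X_J^+$, generates $\O_{Z_J^+}(-w)$ for every such $w$.

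It then remains to check that, as $w$ ranges over the integers in $[\tfrac{3n+4}{4} - |J^c|,\ |J^c| - \tfrac{n+2}{4}]$, the value $-w$ ranges over exactly the integers in $[\lceil\tfrac{n+2}{4}\rceil - |J^c|,\ \lceil\tfrac{n+2}{4}\rceil - 1 - |J|]$ --- which is precisely the range of wall contributions appearing in Proposition~\ref{prop:VnSOD} for the distinguished value $d = \lceil\tfrac{n+2}{4}\rceil$ singled out by Lemma~\ref{lem:d0}. Unwinding the ceilings, this reduces to the identity $\lceil\tfrac{n+2}{4}\rceil + \lceil\tfrac{3n+4}{4}\rceil = n + 2$ for even $n$ (verified by treating $n \equiv 0$ and $n \equiv 2 \pmod 4$ separately) together with $|J^c| = n + 1 - |J|$. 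This interval-matching --- and checking that the two branches of Corollary~\ref{corollary:generation_up_top} leave no integer uncovered --- is the only non-formal point; everything else is the projection formula plus the weight computation $\widehat{\lambda}_J(E-H) = -1$ and $\widehat{\lambda}_J(E_i) = 0$ for $i \in J^c$, the real content having already been absorbed into Lemma~\ref{lemma:gen_koszul} and Corollary~\ref{corollary:generation_up_top}.
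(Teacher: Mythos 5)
Your proposal is correct and follows essentially the same route as the paper: specialize $U = W^{\operatorname{ss}}(\chi^+)$ in Corollary~\ref{corollary:generation_up_top}, descend through the identifications of Lemma~\ref{lem:unstableloci}, compute $\widehat{\lambda}_J(w(E-H)) = \widehat{\lambda}_J(w(E-H) - \sum_{i \in J^c} E_i) = -w$, and match the resulting range of $a = -w$ to the interval in the statement. The paper presents the interval-matching more implicitly (the paper's stated interval is literally the negation of the interval in Corollary~\ref{corollary:generation_up_top}, and it relies on the identity $\lceil\tfrac{n+2}{4}\rceil - 1 = \lfloor\tfrac{n}{4}\rfloor$ established inside Lemma~\ref{lem:d0}); your explicit ceiling identity and the observation that the two branches of Corollary~\ref{corollary:generation_up_top} cover all integers for even $n$ are nice clarifications of points the paper leaves to the reader, but the argument is substantively identical.
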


\begin{proof}
Assume that $w$ lies in 
\begin{equation}\label{eqn:interval}
 \left[ \frac{3n+4}{4} - |J^c|, |J^c| - \frac{n+2}{4} \right] \cap \mathbb{Z}.
\end{equation}
Let $U$ be a $G$-stable open subset of $W$. From Corollary~\ref{corollary:generation_up_top}, if $w \leq \frac n4$, then the collection $\oldF_n$ generates $\O_{U \cap W_{J}^+}(w(E -H))$. If $w \geq \frac{(n+2)}{4}$, then $\oldF_n$ generates $\O_{U \cap W_{J}^+}(w(E-H) - \sum_{i\in J^c} E_i)$. We take $U = W^{\operatorname{ss}}(\chi^+)$ and recall that $Z_J^+$ is $\left[ U \cap W_J^+ / G \right]$. 

Thus, $\oldF_n$ generates $\mathcal O_{Z_J^+}(w(E-H))$ for $w \leq \frac n4$ and $\mathcal O_{Z_J^+}(w(E-H) - \sum_{i\in J^c} E_i)$ for $w \geq \frac{(n+2)}{4}$. The weights with respect to $\lambda_J$ are 
\begin{align*}
 \widehat{\lambda}_J\left(w(E-H)\right) & = - w + |J \cap \varnothing| = -w \\
 \widehat{\lambda}_J\left(w(E-H) - \sum_{i \in J^c} E_i\right) & = -w + |J \cap J^c| = -w. 
\end{align*}
Applying Lemma~\ref{lem:unstableloci} and using the $\lambda_J$-weight computations, when $w$ lies in the interval given in \eqref{eqn:interval},
the collection $\oldF_n$ will generate $\mathcal O_{Z_J^+}(a)$ for
\begin{displaymath}
 a \in \left[ \frac{n+2}{4} - |J^c|, |J^c| - \frac{3n+4}{4} \right] \cap \mathbb{Z},
\end{displaymath}
which is the same interval as in the statement of the lemma.
\end{proof}

Finally, we can easily handle the generation result for $\oldF_n$.

\begin{thm} 
The collection $\oldF_n$ generates the category $\mathsf{D^b}(V_n)$.
\end{thm}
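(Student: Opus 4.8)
The plan is to prove generation by induction along the run of the MMP from $\bbP^n$ up to $V_n$ set up in Notation~\ref{notation:full_run}, feeding the semi-orthogonal decompositions of Proposition~\ref{prop:VnSOD} into the generation criterion of Lemma~\ref{lem:gen} one wall at a time. Recall that for every $J\subseteq\{0,\ldots,n\}$ with $|J|\le n/2$ the wall $\widehat{\lambda}_J=0$ (Lemma~\ref{lem:GKZwalls}) separates two GIT quotients $X_J^+$ and $X_J^-$, and that $\widehat{\lambda}_J(-K)=n+1-2|J|>0$, so $V_n=W\modmod{-K}G$ lies on the $(+)$–side of every such wall; likewise the chamber of $\bbP^n$ (Corollary~\ref{cor:pn}) is the $(+)$–chamber of the wall $J=\varnothing$, with $X_\varnothing^-=\varnothing$. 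As the ordered subsets $J$ with $0\le|J|\le n/2$ run through Notation~\ref{notation:full_run} the chambers glue as $X_J^+=X_{J'}^-$ where $J'$ is the successor of $J$, the first model is $X_\varnothing^+\cong\bbP^n$, and the last is $X_J^+\cong V_n$ for $J$ the lexicographically largest subset of size $n/2$. Throughout I fix the single integer $d:=\lceil(n+2)/4\rceil$, and the induction hypothesis is: $\oldF_n$, regarded as line bundles on $X_J^-$, generates $\mathsf{D^b}(X_J^-)$.

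For the inductive step at a fixed $J$ with $|J|\le n/2$, I would apply Lemma~\ref{lem:gen} to the semi-orthogonal decomposition of Proposition~\ref{prop:VnSOD},
\[
\mathsf{D^b}(X_J^+)=\langle\,\O_{Z_J^+}(d-|J^c|),\ \ldots,\ \O_{Z_J^+}(d-1-|J|),\ \mathsf{D^b}(X_J^-)\,\rangle,
\]
taking $\mathsf{T}=\mathsf{D^b}(X_J^+)$, $\mathsf{B}=\mathsf{D^b}(X_J^-)$, $\mathsf{A}$ the subcategory generated by the wall contributions $\O_{Z_J^+}(d-|J^c|),\ldots,\O_{Z_J^+}(d-1-|J|)$, $\Psi=\Psi_d$ (which is the right adjoint to the inclusion $\mathsf{B}\hookrightarrow\mathsf{T}$ by Theorem~\ref{thm:fundwin2}), and $F=\bigoplus_{\O(L)\in\oldF_n}\O_{X_J^+}(L)$. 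Hypothesis (1) of Lemma~\ref{lem:gen}, that $\mathsf{A}$ is contained in the subcategory generated by $F$, is exactly Lemma~\ref{lem:genpieces}: its list of sheaves on $Z_J^+$ that $\oldF_n$ generates coincides with the wall contributions above precisely because $d=\lceil(n+2)/4\rceil$ is the value pinned down in Lemma~\ref{lem:d0}. For hypothesis (2), Lemma~\ref{lem:d0} tells us every $\O(L)\in\oldF_n$ has $\widehat{\lambda}_J(L)\in I_{d,J}^+$, so Lemma~\ref{lem:through_the_wall} (as recorded in Proposition~\ref{prop:VnSOD}) gives $\Psi_d(\O_{X_J^+}(L))=\O_{X_J^-}(L)$; hence $\Psi_d(F)=\bigoplus\O_{X_J^-}(L)$ is just $\oldF_n$ on $X_J^-$, which generates $\mathsf{D^b}(X_J^-)$ by the induction hypothesis. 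Lemma~\ref{lem:gen} then yields that $F$, i.e. $\oldF_n$, generates $\mathsf{D^b}(X_J^+)=\mathsf{D^b}(X_{J'}^-)$, which is the induction hypothesis for the successor $J'$.

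The base case $J=\varnothing$ is the instance of this same step in which $X_\varnothing^-=\varnothing$, so $\mathsf{B}=0$ and hypothesis (2) is vacuous, while $Z_\varnothing^+=X_\varnothing^+=\bbP^n$ and the wall contributions are the $n+1$ consecutive line bundles $\O_{\bbP^n}(d-n-1),\ldots,\O_{\bbP^n}(d-1)$, a full (Beilinson) exceptional collection; thus Lemma~\ref{lem:genpieces} alone already shows $\oldF_n$ generates $\mathsf{D^b}(\bbP^n)$. Propagating the induction through all $J$ with $|J|\le n/2$ in the order of Notation~\ref{notation:full_run}, the final model is $X_J^+\cong V_n$, so $\oldF_n$ generates $\mathsf{D^b}(V_n)$.

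I expect no genuinely hard analytic content here: the acyclicity estimates (Lemmas~\ref{lem:acyclic}–\ref{lem:difference}), the window decompositions (Proposition~\ref{prop:VnSOD}, via Lemmas~\ref{lem:wall contributions} and~\ref{lem:unstableloci}), the Koszul–resolution generation argument (Corollary~\ref{corollary:generation_up_top}, Lemma~\ref{lem:genpieces}), and the uniform weight bound (Lemma~\ref{lem:d0}) have all been carried out. The only real care the proof demands is bookkeeping: using the \emph{one} integer $d=\lceil(n+2)/4\rceil$ that simultaneously works at every wall (this is the whole point of stating Lemma~\ref{lem:d0} uniformly in $J$), orienting each wall so that $X_J^+$ is the $V_n$–side and $\Psi_d$ projects onto the $\bbP^n$–side summand, and checking that consecutive chambers glue as $X_J^+=X_{J'}^-$ so that the induction actually chains from $\bbP^n$ all the way up to $V_n$.
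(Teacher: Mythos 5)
Your proposal is correct and follows the paper's own argument essentially step for step: the same fixed integer $d=\lceil(n+2)/4\rceil$, the same use of Proposition~\ref{prop:VnSOD} and Lemma~\ref{lem:gen}, Lemma~\ref{lem:genpieces} for the wall contributions, Lemma~\ref{lem:d0} and Lemma~\ref{lem:through_the_wall} to show $\Psi_d$ fixes $\oldF_n$, and an induction along the chamber walk terminating at $X_\varnothing^-=\varnothing$. The only differences are cosmetic: you phrase the induction bottom-up from $\bbP^n$ rather than downward on $J$, and you spell out the chamber-gluing $X_J^+=X_{J'}^-$ and the $\bbP^n$ base case a bit more explicitly.
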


\begin{proof}
Set 
\begin{displaymath}
 d := \left \lceil \frac{n+2}{4} \right \rceil. 
\end{displaymath}
Using Proposition~\ref{prop:VnSOD}, we have a semi-orthogonal decomposition
\begin{displaymath}
  \mathsf{D^b}(X_{J}^+) = \left\langle \O_{Z_{J}^+}\left(\left \lceil \frac{n+2}{4} \right \rceil-|J^c|\right), \ldots,  \O_{Z_{J}^+}\left( \left \lceil \frac{n+2}{4} \right \rceil - 1 - |J| \right),\mathsf{D^b}(X_{J}^-) \right\rangle.
\end{displaymath}
By Lemma~\ref{lem:d0}, the collection $\oldF_n$, viewed as line bundles, generates the components
\begin{displaymath}
 \O_{Z_{J}^+}\left(\left \lceil \frac{n+2}{4} \right \rceil-|J^c|\right), \ldots,  \O_{Z_{J}^+}\left( \left \lceil \frac{n+2}{4} \right \rceil - 1 - |J| \right).
\end{displaymath}

To show that $\oldF_n$ generates $\mathsf{D^b}(V_n)$, we work via
(downward) induction on the lexicographic ordering given above on $J
\subseteq \{0,\ldots,n\}$ for all $|J| \leq \frac n2$.
Using Lemma~\ref{lem:gen} and the semi-orthogonal decomposition above, we see that $\oldF_n$ generates $\mathsf{D^b}(X_{J}^+)$ if $\Psi_{\lceil (n+2)/4 \rceil}(\oldF_n)$ generates $\mathsf{D^b}(X_{J}^-)$. 

Using the second statement of Proposition~\ref{prop:VnSOD} and the weights of $\oldF_n$ computed in Lemma~\ref{lem:d0}, we see that $\Psi_{\lceil (n+2)/4 \rceil}(\oldF_n) = \oldF_n$ (recall we are identifying these elements with their corresponding line bundles). Thus, we reduce to the base case of the induction: $X^-_{\varnothing} = \varnothing$. Since $\mathsf{D^b}(X^-_\varnothing) = 0$, the statement here is trivial.
(One can alternatively start the induction with non-empty $J$ since
$X^+_\varnothing \cong \mathbb{P}^n$
where  $\mathsf{D^b}(X^+_\varnothing)$ is generated by Beilinson's collection.)
\end{proof}


\bibliographystyle{alpha}
\bibliography{ExcCollToric}

\end{document}